\crefname{hypothesis}{Hypothesis}{Hypotheses}
\newcommand{\ddiv}{\operatorname{div}}
\title{Modeling, well-posedness and discretization for a class of models for mixed-dimensional problems with high dimensional gap
\thanks{
\funding{E. Hodneland is supported through Norwegian Research Council (NRC) grant 262203. The work of 
X. Hu is partially supported by the National Science Foundation under grant DMS-1812503 and CCF-1934553. 
The work of J. M. Nordbotten is partially supported by NRC grant number 250223.}
}
}
\author{Erlend Hodneland\thanks{Norwegian Research Centre, Bergen, Norway; Mohn Medical Imaging and Visualization Centre, Department of Radiology, Haukeland Universitetssykehus, Bergen, Norway
		(\email{erlend.hodneland@norceresearch.no}).}
	\and Xiaozhe Hu\thanks{Department of Mathematics, Tufts University, Medford, MA 02155, USA
		(\email{Xiaozhe.Hu@tufts.edu}).}
	\and Jan Martin Nordbotten \thanks{Department of Mathematics, University of Bergen, Bergen, Norway (\email{jan.nordbotten@math.uib.no}).}
}
\begin{document}

\maketitle

\begin{abstract}
In this work, we illustrate the underlying mathematical structure of mixed-dimensional models arising from the composition of graphs and continuous domains. Such models are becoming popular in applications, in particular, to model the human vasculature. We first discuss the model equations in the strong form which describes the conservation of mass and Darcy's law in the continuum and network as well as the coupling between them.  By introducing proper scaling, we propose a weak form that avoids degeneracy.  Well-posedness of the weak form is shown through standard Babu\v{s}ka-Brezzi theory. We also develop the mixed formulation finite-element method and prove its well-posedness.  A mass-lumping technique is introduced to derive the two-point flux approximation type discretization as well, due to its importance in applications.  Based on the Babu\v{s}ka-Brezzi theory, error estimates can be obtained for both the finite-element scheme and the TPFA scheme. We also discuss efficient linear solvers for discrete problems. Finally, we present some numerical examples to verify the theoretical results and demonstrate the robustness of our proposed discretization schemes.  
\end{abstract}

\begin{keywords}
	Mixed-dimensional problems, mixed-formulation finite-element method \LaTeX
\end{keywords}

\begin{AMS}
	65N30, 65N15, 65N08, 65N22
\end{AMS}

\section{Introduction} \label{sec:intro}
Coupled fluid flow in networks and porous domains arise in various applications, including blood flow in the human body as well as wells in geological applications.  Such models are referred to as mixed-dimensional when the network flow is simplified to a family of 1D domains along with the network edges\footnote{Some authors refer to this problem as multiscale (see e.g. \cite{DangeloQuarteroni2008,KopplVidottoWohlmuthZunino2018}), however, we prefer the nomenclature mixed-dimensional to avoid confusion with equidimensional multiscale methods such as are encountered in (numerical) homogenization problems.}. Moreover, when the coupling between the network and the domain exceeds two topological dimensions, the model is referred to as having a high dimensional gap~\cite{Nordbotten2019,LaurinoZunino2019}. A high dimensional gap thus arises when the flow in the network is connected to a domain of dimension $d\geq 2$ through its leaf nodes, or when the flow in the network is connected to a domain of dimension $d\geq 3$ through its edges. 

\begin{figure}
    \centering
    \includegraphics{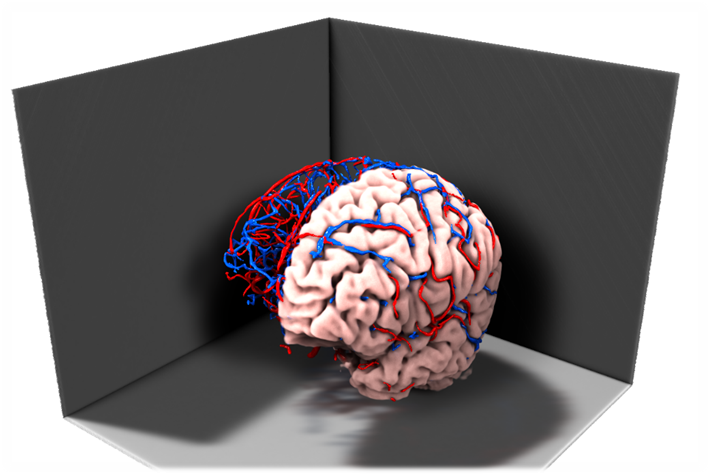}
    \caption{Illustration of a characteristic mixed-dimensional geometry associated with blood flow in the brain. This illustration is based on the the data-set used in the full-brain simulation study in section \ref{sec:Case3}. The arterial tree is indicated in red, and the venous tree in blue. Note the complex geometry of the outer boundary of the brain (i.e. the domain $\Omega$).}
    \label{fig:brain-figure}
\end{figure}

In this paper, we consider the problem composed of flow in one or more trees, coupled with a (porous) domain.  This setting is motivated by blood flow in the brain, wherein the networks are the arterial and venous trees, and the domain is the sub-resolution capillary bed. This context is shown in Figure \ref{fig:brain-figure}, which we will return to in the numerical results. Recognizing that the leaf nodes in the tree (referred to as "terminals" hereafter) are in applications an artifact of limited imaging resolution, we consider in our equations a mesoscale model wherein fluid is distributed into the porous domain in a support region near the terminals. Such models have recently been introduced in \cite{HodnelandHansonSaevareidNaevdalLundervoldSolteszovaMunthe-KaasDeistungReichenbachNordbotten2019} and also considered in \cite{koch2020modeling,shipley2020hybrid}, and are attractive also from a mathematical perspective, as they avoid the singularities which otherwise characterize the coupled equations. In this work, we will not adopt the precise models used in \cite{HodnelandHansonSaevareidNaevdalLundervoldSolteszovaMunthe-KaasDeistungReichenbachNordbotten2019,koch2020modeling} directly, as they consider an explicitly given structure of fluid distribution between the network and the porous domain. In contrast, we will use a more canonical formulation, where the flow resistance is given, and the fluid distribution from the terminal is calculated. 

Previous mathematical analysis of models with high dimensional gap has to a large extent been focused on how to handle the singularities arising when the coupling is "point-wise" between the network and the domain (see e.g. \cite{DangeloQuarteroni2008,KopplVidottoWohlmuth2016,GjerdeKumarNordbottenWohlmuth2019}). In contrast, the model discussed herein has to our knowledge not been subjected to mathematical analysis before. In the absence of singularities, we exploit in this paper the framework recently developed for problems with small dimensional gap~\cite{BoonNordbottenVatne2019}, and define mixed-dimensional variables and operators for the coupled problem. Together with appropriately defined integration and inner products, we then observe that we have available tools such as a mixed-dimensional Stokes' theorem, integration by parts, and Hilbert spaces. This forms the building blocks for our well-posedness results and numerical analysis. 

The main results of the paper are thus as follows: 

\begin{itemize}
    \item A general, non-singular model for a class of problems with a large dimensional gap. 
    \item Well-posedness theory for both the continuous and finite-dimensional problem. 
    \item Convergence results for mixed finite-element approximation and a finite volume variant. 
    \item Numerical validation and application to a high-resolution data-set of a real human brain. 
\end{itemize}

We structure the paper as follows. In Section~\ref{sec:model} we present the model equations in both strong and weak forms and show well-posedness. In Section~\ref{sec:FEM} and~\ref{sec:TPFA} we state and analyze the finite-element and finite volume approximations, respectively. The theoretical results are validated in Section~\ref{sec:numerics}. Finally, we give some conclusions in Section~\ref{sec:conclusion}.

\section{Model Equations} \label{sec:model}
In this section, we discuss the basic geometric setup and model equations for coupled network-Darcy flow in brain.  We will both introduce the strong form and then derive the weak form by introducing proper spaces. 

\subsection{Geometry}
We are concerned with a domain~$\Omega \subset \mathbb{R}^n$ (which models the capillaries). In addition, we are concerned with a finite collection of rooted trees~$\mathcal{T}$ with node (vertex) set~$\mathcal{N}_{\mathcal{T}}$ and edge set~$\mathcal{E}_{\mathcal{T}}$ (which model resolved arteries and veins).  The arterial and venous trees are considered disjoint and, therefore, form a forest $\mathcal{F}$ with node set $\mathcal{N} = \cup_{\mathcal{T}\in\mathcal{F}} \, \mathcal{N}_\mathcal{T}$ and edge set $\mathcal{E}= \cup_{\mathcal{T} \in \mathcal{F}} \, \mathcal{E}_{\mathcal{T}}$. We will refer to the composite (mixed-dimensional) problem domain of both $\Omega$ and $\mathcal{F}$ as the disjoint union $\mathfrak{B}=\Omega\sqcup\mathcal{F}$.  

We further distinguish the nodes of the forest as follows. The node set $\mathcal{N}$ can be subdivided into three disjoint subsets, the first and last of which are assumed to be non-empty: root nodes~$\mathcal{N}_R$, interior nodes~$\mathcal{N}_I$, and terminal nodes~$\mathcal{N}_T$. Note that $\mathcal{N} = \mathcal{N}_R \cup \mathcal{N}_I \cup \mathcal{N}_T$ and we use $\mathcal{N}_{\mathcal{T}, R} = \mathcal{N}_{\mathcal{T}} \cap \mathcal{N}_R$, $\mathcal{N}_{\mathcal{T}, I} = \mathcal{N}_{\mathcal{T}} \cap \mathcal{N}_I$, and $\mathcal{N}_{\mathcal{T}, T} = \mathcal{N}_{\mathcal{T}} \cap \mathcal{N}_T$ to denote the root nodes, interior nodes, and the terminal nodes of a given tree $\mathcal{T}$, respectively.  Naturally, we also have $\mathcal{N}_{\mathcal{T}} = \mathcal{N}_{\mathcal{T},R} \cup \mathcal{N}_{\mathcal{T},I} \cup \mathcal{N}_{\mathcal{T},T}$. We further divided the root nodes $\mathcal{N}_{R}$ into two disjoint sets $\mathcal{N}_{D}$, which consists of the Dirichlet root nodes, and $\mathcal{N}_{N}$, which consists of the Neumann root nodes. The Dirichlet root nodes will be treat explicitly as Dirichlet boundary conditions and the Neumann root nodes will be implicitly handled through the right-hand side of the conservation laws on the graph. Following the same convention, $\mathcal{N}_{\mathcal{T}, D}$ and $\mathcal{N}_{\mathcal{T},N}$ denotes the Dirichlet or Neumann root nodes of a given tree $\mathcal{T}$, respectively.  Note that each tree can only have one root. Therefore, we can subdivide the forest into two disjoint sub-forests, i.e., Dirichlet rooted forest $\mathcal{F}_{D}$, which contains all the Dirichlet rooted trees $\mathcal{T}_{D}$, and Neumann rooted forest $\mathcal{F}_{N}$, which contains all the Neumann rooted trees $\mathcal{T}_{N}$.  Naturally, $\mathcal{N}_{\mathcal{F}_D} = \cup_{\mathcal{T}\in\mathcal{F}_D} \mathcal{N}_{\mathcal{T}}$ and $\mathcal{N}_{\mathcal{F}_N} = \cup_{\mathcal{T}\in\mathcal{F}_N} \mathcal{N}_{\mathcal{T}}$.  Furthermore, we define, $\mathcal{N}_{\mathcal{F}_D, R} = \mathcal{N}_{\mathcal{F}_D} \cap \mathcal{N}_R$, $\mathcal{N}_{\mathcal{F}_D, I} = \mathcal{N}_{\mathcal{F}_D} \cap \mathcal{N}_I$, $\mathcal{N}_{\mathcal{F}_D, T} = \mathcal{N}_{\mathcal{F}_D} \cap \mathcal{N}_T$, $\mathcal{N}_{\mathcal{F}_N, R} = \mathcal{N}_{\mathcal{F}_N} \cap \mathcal{N}_R$, $\mathcal{N}_{\mathcal{F}_N, I} = \mathcal{N}_{\mathcal{F}_N} \cap \mathcal{N}_I$, and $\mathcal{N}_{\mathcal{F}_N, T} = \mathcal{N}_{\mathcal{F}_N} \cap \mathcal{N}_T$.
We denote the set of the neighbors of the node $i$ as $\mathcal{N}_i$ and the set of all the edges meeting at~$i\in \mathcal{N}$  as~$\mathcal{E}_i$.  Note that, $|\mathcal{N}_i| = 1$ and $|\mathcal{E}_i| = 1$, if $i \in \mathcal{N}_R \cup \mathcal{N}_T$. These concepts are illustrated for $n=2$ in Figure \ref{fig:schematic}. 

\begin{figure}
    \centering
    \includegraphics[width=15cm]{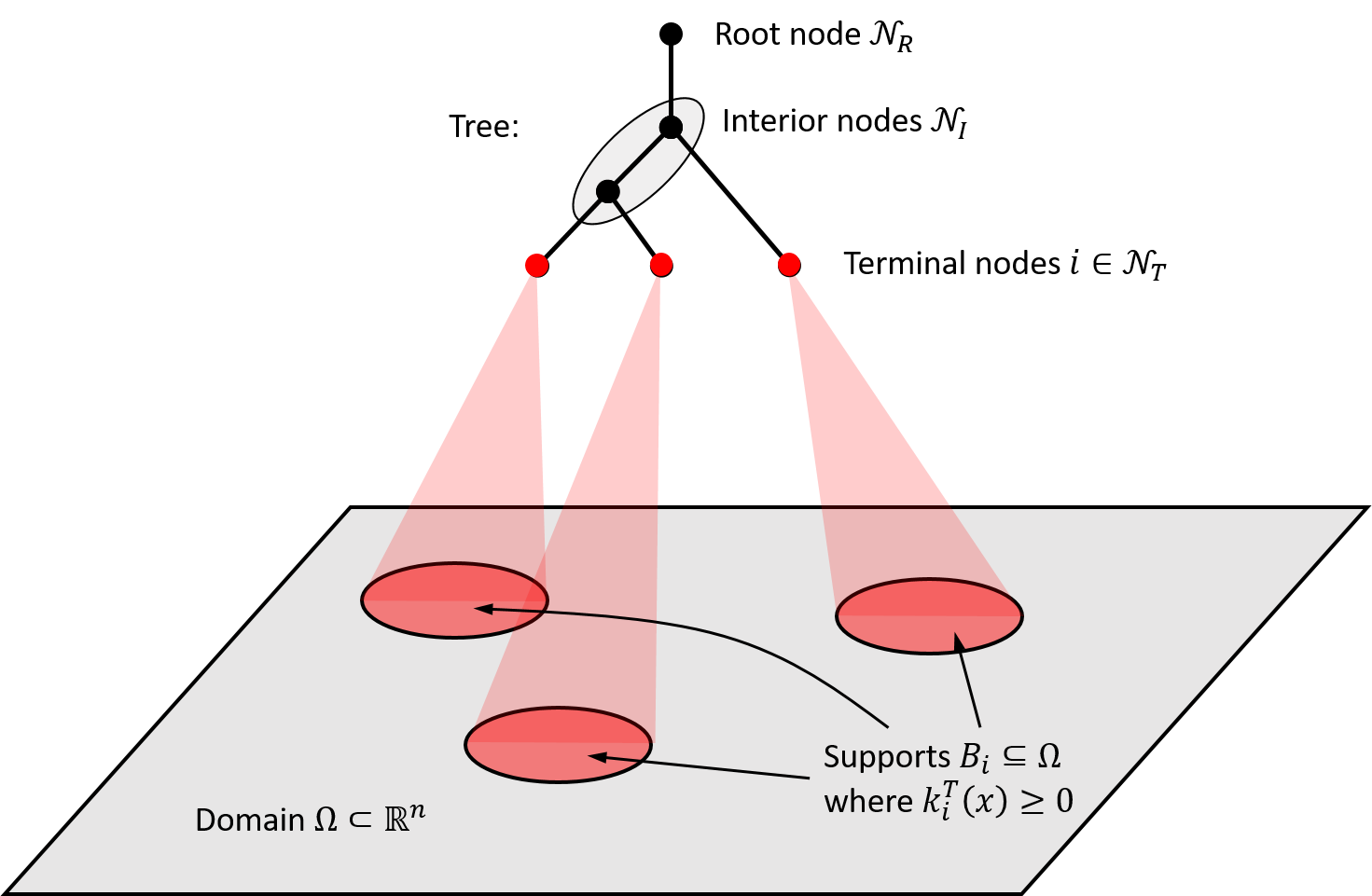}
    \caption{Schematic illustrating the mixed-dimensional geometry $\mathfrak{B}$, and its subdivision into a continuous domain and a forest. Also shown is the coupling between the trees and the model domain.}
    \label{fig:schematic}
\end{figure}

\subsection{Strong From}
As primary variables we choose the domain pressure potential~$p^D(\bm{x}):\Omega \mapsto \mathbb{R}$ and the node pressure potentials~$p^{N}:\mathcal{N} \mapsto \mathbb{R}$. Furthermore, we consider the fluid mass fluxes denoted in the domain as~$\bm{q}^D(\bm{x}): \Omega \mapsto \mathbb{R}^n$, fluid mass flow from node~$i$ to ~$j$ denoted~$q^{N}_{i,j}: \mathcal{N} \times \mathcal{N} \mapsto \mathbb{R}$ and fluid mass flow transferring from terminal node~$i$ to point~$\bm{x}$ denoted~$q_i^T(\bm{x}):\Omega \mapsto \mathbb{R}$.  This last variable models the flow in \emph{unresolved} arteries and veins, and is a novel component our this work. 

First, we consider the model equations for mass conservation and they are given as follows based on the above definitions and notation. 
\begin{align}
&\text{(Conservation of mass in brain tissue)} \quad \nabla \cdot \bm{q}^D - \sum_{i\in \mathcal{N}_T} q_i^T = r^D, \quad \text{in} \ \Omega \label{eqn:conservation-brain}	\\
&\text{(Conservation of mass at interior nodes)} \quad -\sum_{j \in \mathcal{N}_i} q^N_{j,i} = r_i^N, \quad \text{for all} \ i \in \mathcal{N}_I \cup \mathcal{N}_N \label{eqn:conservation-int-node} \\
&\text{(Conservation of mass at terminal nodes)}\int_{\Omega} q_i^T(\bm{x}) \mathrm{d} \bm{x} - q^N_{\mathcal{N}_i,i} = r_i^N \quad \text{for all} \ i \in \mathcal{N}_T \label{eqn:conservation-term-node}
\end{align}
Here, the signs in~\eqref{eqn:conservation-brain}-\eqref{eqn:conservation-term-node} are chosen such that the right-hand-side terms represent sources added to the system.  Moreover, although both $q^N_{i,j}$ and $q^N_{j,i}$ are used in~\eqref{eqn:conservation-int-node} for notational convenience, they should be understood as one unknown with a sign difference.

Next we verify the global conservation of mass based on~\eqref{eqn:conservation-brain}-\eqref{eqn:conservation-term-node} as follows,
\begin{align*}
\text{(Stokes' Theorem)} \quad  \int_{\partial \Omega} \bm{q}^D \cdot \bm{n} \, \mathrm{d} \bm{x} &= \int_{\Omega} \nabla \cdot \bm{q}^D \, \mathrm{d} \bm{x} \\
\text{(By \eqref{eqn:conservation-brain})} & = \sum_{i\in \mathcal{N}_T} \int_{\Omega} q_i^T(\bm{x}) \, \mathrm{d} \bm{x} + \int_{\Omega} r^D \, \mathrm{d}\bm{x} \\
\text{(By \eqref{eqn:conservation-term-node})} &= \sum_{i \in \mathcal{N}_T} q_{\mathcal{N}_i,i}^N + \sum_{i \in \mathcal{N}_T} r^N_i + \int_{\Omega} r^D \, \mathrm{d}\bm{x}\\
\text{(By~\eqref{eqn:conservation-int-node})} &= \sum_{i \in \mathcal{N}_D} q_{i,\mathcal{N}_i}^N + \sum_{i\in \mathcal{N}_{N} \cup \mathcal{N}_I \cup \mathcal{N}_T} r_i^{N} + \int_{\Omega} r^D \, \mathrm{d}\bm{x}.
\end{align*}
Where the last step is also known as the Graph-Stokes' Theorem, which is the counterpart of the Stokes' Theorem on graphs. 

We now propose constitutive laws for the flow. As our exposition primarily is concerned with the geometric complexity, we herein only consider linear constitutive laws, although it is reasonable that non-linear extensions may be required in applications (see e.g. \cite{van2011pulse}). We therefore introduce material parameters, all of which are assumed to be non-negative (precise bounds are given later).  For each edge $e(i,j) \in \mathcal{E}$, we assign a conductivity $k_{e(i,j)}^N$, which can be considered as the edge weights in certain sense. In the domain, for each $\bm{x} \in \Omega$ we assign a permeability tensor $k^D(\bm{x}): \Omega \mapsto \mathbb{R}^{n \times n}$.  For each terminal node $i \in \mathcal{N}_T$, we assign connectivity function $k_i^T(\bm{x}): \Omega \mapsto \mathbb{R}$.  Now based on the assumption that the potential flow is linear, we have the following constitutive laws. 
\begin{align}
&\text{(Potential flow in brain (Darcy))} \quad  \bm{q}^D = -k^D(\bm{x}) \nabla p^D, \quad \text{in} \ \Omega, \label{eqn:potential-flow-brain} \\
&\text{(Potential flow in network (Poiseuille))} \quad q_{i,j}^N = -k_{e(i,j)}^N\left( p_j^N - p_i^N \right), \quad \text{for} \ e(i,j) \in \mathcal{E}, \label{eqn:pontential-flow-network} \\
&\text{(Potential flow from network to brain)} \quad q_i^T(\bm{x}) = -k_i^T(\bm{x}) \left( p^D - p_i^N \right), \quad \text{for} \ i \in \mathcal{N}_T. \label{eqn:potential-flow-coupling}
\end{align}
The coefficient functions~$k_i^T(\bm{x})$, $i \in \mathcal{N}_T$, represent redistribution in a small region around the terminal node~$i$, thus, can be assumed to have compact support in some domain~$B_i \subseteq \Omega$.

\begin{remark}
In practice the characteristic length scale of $B_i$ is comparable to the distance to the nearest neighbor, i.e.
\begin{equation} \label{eqn:diam-Bi}
\operatorname{diam}(B_i) = \mathcal{O}\left( \min_{j \in \mathcal{N}_T} |\bm{x}_i - \bm{x}_j| \right).
\end{equation}
Moreover, the grid is frequently given by the voxel resolution of the image and the terminals are due to a finite resolution effect, and thus 
\begin{equation} \label{eqn:xi-xj}
 \min_{j \in \mathcal{N}_T} |\bm{x}_i - \bm{x}_j|  =  \mathcal{O}\left(h \right),
\end{equation}
where $h$ is the mesh size. The constants hidden in the $\mathcal{O}$ notation in~\eqref{eqn:diam-Bi} and~\eqref{eqn:xi-xj} are usually between $2$ to $10$ in practical applications that we are interested in.  Consequentially, $q_i^T(\bm{x})$ also is compactly supported in~$B_i$.  While these considerations could be applied to further refine some of the constants in the proofs below, we will not exploit these details in this paper. 
\end{remark}

In addition to the conservation laws and constitutive laws, we also need boundary conditions to close the system. For the sake of simplicity, we only consider the case of homogeneous Neumann data on~$\partial \Omega$ and the Dirichlet root nodes $\mathcal{N}_D$, i.e.,
\begin{equation}\label{eqn:BC}
\bm{q}^D \cdot \bm{n} = 0 \quad \text{on} \ \partial \Omega \qquad  \text{and} \qquad p^N_i = 0, \quad i \in \mathcal{N}_D.
\end{equation}
We want to point out that our results and analysis below also hold for other types of boundary conditions as well only at the cost of extra notation. The choice of Neumann data on $\partial\Omega$ is in a sense the most difficult case, as the inf-sup proofs can be simplified considerably in the case where there is a measurable subset of the boundary with Dirichlet data. 

We close this subsection by the observation that by definition~$q^N_{i,j} = - q^N_{j,i}$. Therefore, although the total number of $q^N_{i,j}$ is $2|\mathcal{E}|$, we only use half of them as the unknowns, i.e., one unknown, $q^N_{i,j}$ or $q^N_{j,i}$, for each edge $e(i,j) \in \mathcal{E}$. The choice is arbitrary.  In this work, we choose the one follows the direction from the root node to the terminal nodes. This direction is also the assigned orientation of the corresponding edge $e(i,j) \in \mathcal{E}$ (i.e., if we choose $q^N_{i,j}$, which means the fluid mass flows from node $i$ to node $j$, the edge $e(i,j)$ is oriented such that it starts at node $i$ and ends at node $j$).  This allows us to define the following signed incidence matrix $\mathcal{G} \in \mathbb{R}^{|\mathcal{E}| \times |\mathcal{N}|}$, such that 
\begin{equation}\label{eqn:incidence-matrix-forest}
\mathcal{G}_{\ell, i} = 
\begin{cases}
1,  &  \text{if flow on edge $\ell$ starts at node $i$} \\
-1, &  \text{if flow on edge $\ell$ ends at node $i$}  \\
0,  & \text{otherwise}  
\end{cases}
\end{equation}
We want to point out that the signed incidence matrix represents a discrete gradient on the graph and its transpose serves as a discrete divergence.

\subsection{Mixed-dimensional formulation and scaling}
The model equations given above contain essentially three expressions of fluxes ($\bm{q}^D, q^T$ and $q^N$), and two expressions of potentials ($p^D$ and $p^N$). It will simplify the following exposition and analysis considerably to treat these as mixed-dimensional variables on $\mathfrak{B}$, on which we define mixed-dimensional operators. 

Therefore, let the mixed-dimensional pressure be denoted $\mathfrak{p}\ : \mathfrak{B}\rightarrow \mathbb{R}$, and defined as the doublet of pressures $\mathfrak{p}:=[p^D, p^N]$. Equivalently, the mixed-dimensional flux is defined as the triplet $\mathfrak{q}:=[\bm{q}^D, q^T, q^N]$. Now, we define the  mixed-dimensional divergence operator $\mathfrak{D} \cdot$ as follows, 
\begin{equation} \label{eqn:mixed-divergence1}
\mathfrak{D} \cdot \mathfrak{q} = \mathfrak{D} \cdot [\bm{q}^D, q^T, q^N] := [u^D, u^N],
\end{equation}
where
\begin{equation}
u^D := \nabla \cdot \bm{q}^D - \sum_{i\in \mathcal{N}_T} q_i^T  \quad \text{and} \quad u^N_i = 
\begin{cases}
 - \sum_{j \in \mathcal{N}_i} q_{j,i}^N, \quad i \in \  \mathcal{N}_I \cup \mathcal{N}_N,  \\
 \int_{B_i} q_i^T(\bm{x}) \, \mathrm{d} \bm{x} - q_{\mathcal{N}_i,i}^N, \quad i \in \mathcal{N}_T.
\end{cases}
\end{equation}

Similarly, we define the mixed-dimensional gradient $\mathfrak{D}$ as
\begin{equation} \label{eqn:mixed-gradient1}
\mathfrak{D} \mathfrak{p} = \mathfrak{D} [p^D, p^N] := [\bm{v}^D, v^T, v^N],
\end{equation}
where
\begin{equation}
\bm{v}^D := \nabla p^D\quad \text{and} \quad
v^T_i(\bm{x}) := p^D(\bm{x}) - p_i^N, \ i \in \mathcal{N}_T, 
\quad \text{and}
\quad v^N = \mathcal{G} p^N.
\end{equation}

In addition, we introduce the function $\mathfrak{K}$ which contains all the material functions $k^D$, $k_i^T(\bm{x})$, $i \in \mathcal{N}_T$, and $k_{e(i,j)}^N$, $e(i,j)\in \mathcal{E}$, in~\eqref{eqn:potential-flow-brain} to~\eqref{eqn:potential-flow-coupling}, such that
\begin{equation*} 
\mathfrak{K}^{-1}[\bm{q}^D, q^T, q^N] := [ (k^D)^{-1}\bm{q}^D, (k^T)^{-1}q^T, (k^N)^{-1}q^N ],
\end{equation*}
where $k^N = \operatorname{diag}(k^N_{e(i,j)})$. It is now straight forward to verify that with these definitions, the conservation laws \eqref{eqn:conservation-brain}-\eqref{eqn:conservation-term-node} can be summarized as 
\begin{equation}\label{eqn:conservation-md}
    \mathfrak{D}\cdot\mathfrak{q} = \mathfrak{r},
\end{equation}
where $\mathfrak{r}\equiv[r^D, r^N]$. Furthermore, the constitutive laws \eqref{eqn:potential-flow-brain} to~\eqref{eqn:potential-flow-coupling} can be summarized as 
\begin{equation}\label{eqn:potential-flow-md}
    \mathfrak{q} = -\mathfrak{K}\mathfrak{D}\mathfrak{p}.
\end{equation}

While the physical model formulation is satisfactory for non-degenerate $k^T_i(\bm{x})$, $i \in \mathcal{N}_T$, it will be beneficial to rescale the coupling flux to avoid considering a degenerate inner product when $k^T_i(\bm{x})\rightarrow 0$ for some points or region in $B_i$. To that aim, we introduce the square-root of the transfer coefficient~$k_i^S(\bm{x}) = \sqrt{k_i^T(\bm{x})}$, $i \in \mathcal{N}_T$, and the scaled transfer mass flux~$q_i^S(\bm{x}): \Omega \mapsto \mathbb{R}$, $i \in \mathcal{N}_T$, is defined as~$q_i^S(\bm{x}) = \left(k_i^S(\bm{x})\right)^{-1} q_i^T(\bm{x})$. Thus, we 
replace~\eqref{eqn:conservation-brain} \eqref{eqn:conservation-term-node}, and~\eqref{eqn:potential-flow-coupling} with
\begin{align}
&\text{(Conservation of mass in brain tissue)} \quad \nabla \cdot \bm{q}^D - \sum_{i\in \mathcal{N}_T} k_i^S q_i^S = r^D, \quad \text{in} \ \Omega, \label{eqn:conservation-brain-scale}	\\
&\text{(Conservation of mass at terminal nodes)}\int_{B_i} k_i^S q_i^S(\bm{x}) \mathrm{d} \bm{x} - q^N_{\mathcal{N}_i,i} = r_i^N \quad \text{for all} \ i \in \mathcal{N}_T, \label{eqn:conservation-term-node-scale} \\
&\text{(Potential flow from network to brain)} \quad q_i^S(\bm{x}) = -k_i^S(\bm{x}) \left( p^D - p_i^N \right), \quad \text{for} \ i  \in \mathcal{N}_T, \label{eqn:potential-flow-coupling-scale}
&
\end{align}
respectively. In this setting, we allow for degeneracy of the coupling term in the sense that we allow~$k_i^S(\bm{x}) \mapsto 0$. However, we require that $k_i^S$ is bounded from above, i.e., $k_i^S(\bm{x}) \leq C_{k^S}$ for $i \in \mathcal{N}_T$ and $\bm{x} \in \Omega$.  Furthermore, for all~$i$, we require it to hold that 
\begin{equation*}
\int_{B_i} k_i^S \mathrm{d} \bm{x} = c_{k^S_i} \geq c_{k^S} > 0,
\end{equation*}
where $c_{k^S}$ is a generic constant.  We note that a similar scaling has been applied previously to handle degeneracies occurring in mantle dynamics \cite{arbogast2017mixed} and flows in fractured porous media \cite{boon2018robust}. 

Equivalently, We denote the scaled mixed-dimensional flux on $\mathfrak{B}$ as $\mathfrak{q}^S \equiv [\bm{q}^D, q^S, q^N]$, and the scaling $\mathfrak{S}$ such that 
\begin{equation*} 
\mathfrak{S}^{-1}[\bm{q}^D, q^T, q^N] := [ \bm{q}^D, (k^S)^{-1} q^T, q^N ].
\end{equation*}
Thus, $\mathfrak{q}^S = \mathfrak{S}\mathfrak{q}$, and we can introduce the rescaled divergence and gradients as $\mathfrak{D}^S\cdot := \mathfrak{D}\cdot \mathfrak{S}$ and $\mathfrak{D}^S := \mathfrak{S}\mathfrak{D}$, respectively. The rescaled conservation equations are then summarized as
\begin{equation}\label{eqn:conservation-scaled-md}
    \mathfrak{D}^S\cdot\mathfrak{q}^S = \mathfrak{r}.
\end{equation}
The rescaled conservation equations are summarized as 
\begin{equation}\label{eqn:potential-flow-scaled-md}
    \mathfrak{q}^S = -\mathfrak{K}^S\mathfrak{D}^S\mathfrak{p}, 
\end{equation}
where $\mathfrak{K}^S = \mathfrak{S}^{-1}\mathfrak{K}\mathfrak{S}^{-1}$, and thus 
\begin{equation} \label{eqn:kS}
(\mathfrak{K}^S)^{-1}[\bm{q}^D, q^S, q^N] := [ (k^D)^{-1}\bm{q}^D, q^S, (k^N)^{-1}q^N ].
\end{equation}
Note in particular that $(\mathfrak{K}^S)^{-1}$ applied to $q^S$ has unit weight, and therefore does not degenerate.

\subsection{Weak Form}\label{subsec:weak-form}
In this subsection, we derive the weak formulation of the system. The development will be equally valid for both the original model, equations \eqref{eqn:conservation-md} and \eqref{eqn:potential-flow-md}, as well as the re-scaled model, equations \eqref{eqn:conservation-scaled-md} and \eqref{eqn:potential-flow-scaled-md}. Thus we will omit the superscript $S$ on the mixed-dimensional operators and variables to reduce notational overload. Nevertheless, in order to allow for degeneracies, we will always have the rescaled equations in mind, and thus when we need to specifically refer to $q^S$, and consider the coefficient $k^S$ to appear in the differential operator as opposed to the material law.

We first introduce proper function spaces on $\mathfrak{B}$.  We begin by defining a mixed-dimensional square-integrable space for pressure as follows,
\begin{equation*}
L^2(\mathfrak{B}) := L^2(\Omega) \times l^2(\mathcal{N}\backslash \mathcal{N}_D),
\end{equation*}
where $L^2(\Omega)$ is the standard $L^2$ space defined on domain $\Omega$ and $l^2(\mathcal{N}\backslash \mathcal{N}_D)$ is the standard $l^2$ space defined on the node set $\mathcal{N}\backslash \mathcal{N}_D$. For flux, we consider a space with bounded mixed-dimensional divergence as follows,
\begin{equation*}
H(\ddiv, \mathfrak{B}) := H(\ddiv, \Omega) \times \prod_{i\in \mathcal{N}_T}L^2(B_i) \times l^2(\mathcal{E}) 
\end{equation*} 
where $H(\ddiv, \Omega)$ is the space defined on $\Omega$ such that the functions and their divergence are both square-integrable. 
In addition, $L^2(B_i)$ are standard $L^2$ space defined on $B_i$, $i \in \mathcal{N}_T$, and $l^2(\mathcal{E})$ is the standard $l^2$ space defined on the edge set $\mathcal{E}$. 

We associate the mixed-dimensional space $L^2(\mathfrak{B})$ with the following inner product,
\begin{equation*}
(\mathfrak{p},\mathfrak{w}) = 
([p^D,p^N],[w^D,w^N]) := \int_{\Omega} p^D w^D \, \mathrm{d} \bm{x} + \sum_{i \in \mathcal{N}\backslash \mathcal{N}_D} p^N_i w^N_i, \quad \forall \, \mathfrak{p},\mathfrak{w} \in L^2(\mathfrak{B}). 
\end{equation*}
Similarly, we introduce the following inner product on~$H(\ddiv, \mathfrak{B})$, 
\begin{equation*}
(\mathfrak{q},\mathfrak{v}) = ([\bm{q}^D, q^S, q^N], [\bm{v}^D, v^S, v^N]) := \int_{\Omega} \bm{q}^D \cdot \bm{v}^D \, \mathrm{d}\bm{x} + \sum_{i\in \mathcal{N}_T} \int_{B_i} q_i^S v_i^S \, \mathrm{d}\bm{x} + \sum_{e(i,j) \in \mathcal{E}} q_{i,j}^N v_{i,j}^N.
\end{equation*}

It is important to note that the inner products are defined such that integration-by-parts holds in for the mixed-dimensional operators (both original and re-scaled cases).

\begin{lemma}[Integration by parts] \label{lem:IBP}
For any $\mathfrak{q} \in H(\ddiv, \mathfrak{B})$ and $\mathfrak{p} \in L^2(\mathfrak{B})$, we have
\begin{equation}\label{IBP-md}
    (\mathfrak{D}\mathfrak{p},\mathfrak{q}) + (\mathfrak{p},\mathfrak{D}\cdot \mathfrak{q}) = \int_{\partial\Omega} p^D\bm{q}^D\cdot\bm{n} \, \mathrm{d}\bm{x}
    +\sum_{i \in \mathcal{N}_D} p^N_i q_{\mathcal{N}_i,i}^N
\end{equation}
\end{lemma}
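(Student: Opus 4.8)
The plan is to verify the identity by decomposing both mixed-dimensional operators into their continuum, coupling, and graph components, and then applying classical integration-by-parts on each component separately. Writing $\mathfrak{p} = [p^D, p^N]$ and $\mathfrak{q} = [\bm{q}^D, q^S, q^N]$, the left-hand side $(\mathfrak{D}\mathfrak{p},\mathfrak{q}) + (\mathfrak{p},\mathfrak{D}\cdot\mathfrak{q})$ expands, by the definitions of the two inner products and of $\mathfrak{D}$ and $\mathfrak{D}\cdot$, into three groups of terms: a purely continuum group involving $\nabla p^D$ and $\nabla\cdot\bm{q}^D$ on $\Omega$; a coupling group involving the terminal transfer fluxes $q_i^S$ (with the weight $k_i^S$ reinstated, since we keep the rescaled equations in mind) paired against $v_i^T = p^D - p_i^N$ and against the terms $-\sum_{i}k_i^S q_i^S$ in $u^D$ and $\int_{B_i}k_i^S q_i^S$ in $u_i^N$; and a purely graph group involving the discrete gradient $\mathcal{G}p^N$ and discrete divergence $-\sum_{j\in\mathcal{N}_i}q^N_{j,i}$.

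First I would handle the continuum group: the sum of $\int_\Omega \nabla p^D\cdot\bm{q}^D\,\mathrm{d}\bm{x}$ and the $\nabla\cdot\bm{q}^D$ part of $\int_\Omega p^D\,u^D\,\mathrm{d}\bm{x}$ is exactly the classical $H(\ddiv,\Omega)$ integration-by-parts identity, producing the boundary term $\int_{\partial\Omega}p^D\bm{q}^D\cdot\bm{n}\,\mathrm{d}\bm{x}$. Next I would check that the coupling group cancels identically. The $v^T$-term contributes $\sum_{i\in\mathcal{N}_T}\int_{B_i}k_i^S q_i^S(p^D - p_i^N)\,\mathrm{d}\bm{x}$; the leftover pieces of $u^D$ and $u^N$ contribute $-\sum_{i\in\mathcal{N}_T}\int_\Omega p^D k_i^S q_i^S\,\mathrm{d}\bm{x} + \sum_{i\in\mathcal{N}_T}p_i^N\int_{B_i}k_i^S q_i^S\,\mathrm{d}\bm{x}$. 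Using that $q_i^S$ is supported in $B_i$ so that the $\Omega$-integral equals the $B_i$-integral, these three contributions sum to zero term by term. Finally, for the graph group, the sum of $\sum_{e(i,j)\in\mathcal{E}}(\mathcal{G}p^N)_{i,j}\,q^N_{i,j}$ and $\sum_{i\in\mathcal{N}\setminus\mathcal{N}_D}p_i^N\bigl(-\sum_{j\in\mathcal{N}_i}q^N_{j,i}\bigr)$ is the discrete (graph) integration-by-parts identity, i.e.\ the statement that $\mathcal{G}^\top$ is the adjoint of $\mathcal{G}$; since the pressure inner product omits the Dirichlet nodes $\mathcal{N}_D$, the only surviving boundary contribution is $\sum_{i\in\mathcal{N}_D}p_i^N q^N_{\mathcal{N}_i,i}$.

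The main technical care-point — and really the only nonroutine step — is the bookkeeping of the graph term: one must expand $(\mathcal{G}p^N)_{i,j} = p^N_j - p^N_i$ (per the orientation convention fixed in \eqref{eqn:incidence-matrix-forest}) and reorganize the edge sum into a node sum, tracking which node of each edge carries which sign, and then isolate precisely the terms attached to Dirichlet root nodes, which are excluded from the $l^2(\mathcal{N}\setminus\mathcal{N}_D)$ inner product and hence appear on the right-hand side. One also has to confirm that nodes in $\mathcal{N}_I\cup\mathcal{N}_N\cup\mathcal{N}_T$ all have their divergence paired correctly despite the case split in the definition of $u^N$, and that the terminal-node case contributes its graph part $-q^N_{\mathcal{N}_i,i}$ consistently with the interior formula. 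None of this requires estimates, only careful sign-tracking; the well-definedness of all integrals is guaranteed by the choice of spaces $H(\ddiv,\mathfrak{B})$ and $L^2(\mathfrak{B})$ and the boundedness of $k_i^S$.
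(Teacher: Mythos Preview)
Your proposal is correct and follows essentially the same approach as the paper: both arguments expand the mixed-dimensional inner products into their continuum, coupling, and graph components, invoke the classical $H(\ddiv,\Omega)$ integration-by-parts for the domain piece, verify that the terminal coupling terms cancel (using that $q_i^S$ is supported in $B_i$), and reorganize the edge sum into a node sum so that only the Dirichlet-root contributions survive. The paper merely arranges the computation by expanding $(\mathfrak{p},\mathfrak{D}\cdot\mathfrak{q})$ alone and moving terms to the other side, whereas you expand the full left-hand side and group by type; the content is identical.
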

\begin{proof}
By a direct calculation (using the re-scaled operators and variables, the derivation for the original case is the same),  we have that 
\begin{align*}
(\mathfrak{p},\mathfrak{D}\cdot \mathfrak{q})
 =& \int_{\Omega} 
 p^D \left(\nabla \cdot \bm{q}^D - \sum_{i\in \mathcal{N}_T} k_i^S q_i^S\right) \, \mathrm{d} \bm{x} 
 -  \sum_{i \in \mathcal{N}_I \cup \mathcal{N}_N} p^N_i \sum_{j \in \mathcal{N}_i} q_{j,i}^N \\
& + \sum_{i \in \mathcal{N}_T} p^N_i \left(\int_{B_i} k_i^S q_i^S \, \mathrm{d} \bm{x} - q_{\mathcal{N}_i,i}^N\right)\\
 =& -\int_{\Omega} 
 \nabla p^D \cdot \bm{q}^D \, \mathrm{d} \bm{x} 
 + \int_{\partial\Omega} 
 p^D \bm{q}^D\cdot\bm{n}  \, \mathrm{d} \bm{x} 
 -\sum_{e(i,j) \in \mathcal{E}} (p^N_i-p^N_j) q_{i,j}^N\\
 &+\sum_{i \in \mathcal{N}_D} p^N_i q_{N_i,i}^N
 - \sum_{i \in \mathcal{N}_T}\int_{B_i} k_i^S(p^D- p^N_i)  q_i^S \, \mathrm{d} \bm{x}\\
 =& \int_{\partial\Omega} 
 p^D \bm{q}^D\cdot\bm{n}  \, \mathrm{d} \bm{x} 
 +\sum_{i \in \mathcal{N}_D} p^N_i q_{N_i,i}^N
 -(\mathfrak{D}\mathfrak{p},\mathfrak{q}),
 \end{align*}
 which completes the proof.
\end{proof}

To derive the weak formulation, we need to incorporate the boundary conditions.  Recall that we consider $\bm{q}^D \cdot \bm{n} = 0$ on $\partial \Omega$, therefore, we define the following functions space with boundary conditions,
\begin{equation*}
H_0(\ddiv, \mathfrak{B}) := H_0(\ddiv, \Omega) \times \prod_{i\in \mathcal{N}_T}L^2(B_i) \times l^2(\mathcal{E})  \subset H(\ddiv,\mathfrak{B}),
\end{equation*}
where $H_0(\ddiv, \Omega):= \{ \bm{q}^D \in H(\ddiv, \Omega) \,| \, \bm{q}^D \cdot \bm{n} = 0, \ \text{on} \ \partial \Omega \}$. In addition, with any material function $\mathfrak{K}$, we introduce a weighted inner product on $H(\ddiv, \mathfrak{B})$ as follows,
\begin{equation*}
(\mathfrak{q},\mathfrak{v})_{\mathfrak{K}^{-1}} :=
(\mathfrak{K}^{-1}\mathfrak{q},\mathfrak{v}).
\end{equation*}
Using the above function spaces and notation, together with the mixed-dimensional integration by parts formula~\eqref{IBP-md} and the homogeneous Dirichlet boundary conditions~\eqref{eqn:BC} on $\mathcal{N}_D$, i.e.,~$p_i^N = 0$, $i\in\mathcal{N}_D$, we have the following weak form for the conservation laws~\eqref{eqn:conservation-scaled-md}, and constitutive laws~\eqref{eqn:potential-flow-scaled-md}: Find~$\mathfrak{q}\in H_0(\ddiv, \mathfrak{B})$ and $\mathfrak{p}\in L^2(\mathfrak{B})$, such that
\begin{align}
&(\mathfrak{q},\mathfrak{v})_{\mathfrak{K}^{-1}} - (\mathfrak{p}, \mathfrak{D}\cdot\mathfrak{v})  = 0, \quad \forall \ \mathfrak{v} \in H_0(\ddiv, \mathfrak{B}), \label{eqn:weak-form-1}  \\
&-(\mathfrak{D} \cdot \mathfrak{q}, \mathfrak{w})  = -(\mathfrak{r}, \mathfrak{w}), \quad \forall \ \mathfrak{w} \in L^2(\mathfrak{B}). \label{eqn:weak-form-2}
\end{align}
Note that due to the integration by parts formula, if non-homogeneous boundary data is considered, this would appear as extra right-hand side terms in equation~\eqref{eqn:weak-form-1}. 

\subsection{Well-posedness} \label{sec:weak-wellposedness}
In this subsection, we focus on the well-posedness of the weak formulation~\eqref{eqn:weak-form-1}-\eqref{eqn:weak-form-2}. As in the previous subsection, it is understood that we are considering the re-scaled formulation, even though the superscript $S$ is suppressed. We first introduce the following norm on $L^2(\mathfrak{B})$,
\begin{equation} \label{eqn:norm-L2}
\| \mathfrak{p} \|_{L^2(\mathfrak{B})}^2 := (\mathfrak{p},\mathfrak{p}).
\end{equation}
And the following norm on $H(\ddiv, \mathfrak{B})$,
\begin{equation} \label{eqn:norm-Hdiv}
\| \mathfrak{q} \|_{H(\ddiv, \mathfrak{B})}^2 := \| \mathfrak{q} \|_{\mathfrak{K}^{-1}}^2 + \| \mathfrak{D}\cdot \mathfrak{q} \|_{L^2(\mathfrak{B})}^2,
\end{equation}
where 
\begin{equation} \label{eqn:weighted-norm}
\| \mathfrak{q} \|_{\mathfrak{K}^{-1}}^2 := ( \mathfrak{q},  \mathfrak{q})_{\mathfrak{K}^{-1}}.
\end{equation}
We emphasize that the weights in this norm do not degenerate for the re-scaled equations sine the unite weight is applied to~$q^S$, see equation~\eqref{eqn:kS}. 

The next lemma shows that the bilinear forms in the weak formulation~\eqref{eqn:weak-form-1}-\eqref{eqn:weak-form-2} are continuous.

\begin{lemma}[Continuity of~\eqref{eqn:weak-form-1}-\eqref{eqn:weak-form-2}] \label{lem:continuity}
For any $\mathfrak{q}, \mathfrak{v} \in H(\ddiv, \mathfrak{B})$ and $\mathfrak{w} \in L^2(\mathfrak{B})$, we have
\begin{align*}
(\mathfrak{q}, \mathfrak{v})_{\mathfrak{K}^{-1}} & \leq \| \mathfrak{q} \|_{H(\ddiv,\mathfrak{B})} \| \mathfrak{v} \|_{H(\ddiv,\mathfrak{B})}, \\
(\mathfrak{D} \cdot \mathfrak{q}, \mathfrak{w}) & \leq  \| \mathfrak{q} \|_{H(\ddiv,\mathfrak{B})} \| \mathfrak{w} \|_{L^2(\mathfrak{B})}.
\end{align*}
\end{lemma}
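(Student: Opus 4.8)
The plan is to prove the two continuity estimates by direct application of the Cauchy--Schwarz inequality in the relevant inner products, together with the definitions of the norms in \eqref{eqn:norm-L2}--\eqref{eqn:weighted-norm}. Since both bilinear forms are built from the inner products we have already introduced on $L^2(\mathfrak{B})$ and $H(\ddiv,\mathfrak{B})$, there is essentially no analytic difficulty; the argument is a bookkeeping exercise tracing how the component norms assemble into the mixed-dimensional norms.

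For the first estimate, I would observe that $(\mathfrak{q},\mathfrak{v})_{\mathfrak{K}^{-1}} = (\mathfrak{K}^{-1}\mathfrak{q},\mathfrak{v})$ is a genuine inner product on $H(\ddiv,\mathfrak{B})$ (this uses that $\mathfrak{K}^{-1}$ is symmetric positive definite with the unit weight on the $q^S$-block, per \eqref{eqn:kS}, so the weights do not degenerate). Applying Cauchy--Schwarz for the weighted inner product gives
\begin{equation*}
(\mathfrak{q},\mathfrak{v})_{\mathfrak{K}^{-1}} \leq \| \mathfrak{q} \|_{\mathfrak{K}^{-1}} \| \mathfrak{v} \|_{\mathfrak{K}^{-1}},
\end{equation*}
and then the definition \eqref{eqn:norm-Hdiv} immediately yields $\| \mathfrak{q} \|_{\mathfrak{K}^{-1}} \leq \| \mathfrak{q} \|_{H(\ddiv,\mathfrak{B})}$ (and likewise for $\mathfrak{v}$), since $\| \mathfrak{D}\cdot\mathfrak{q} \|_{L^2(\mathfrak{B})}^2 \geq 0$. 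Chaining these gives the claim.

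For the second estimate, I would first note that $\mathfrak{D}\cdot\mathfrak{q} \in L^2(\mathfrak{B})$ whenever $\mathfrak{q} \in H(\ddiv,\mathfrak{B})$ — this is precisely what membership in $H(\ddiv,\mathfrak{B})$ encodes, and it is implicitly built into the norm \eqref{eqn:norm-Hdiv}. Then Cauchy--Schwarz in the $L^2(\mathfrak{B})$ inner product gives
\begin{equation*}
(\mathfrak{D}\cdot\mathfrak{q}, \mathfrak{w}) \leq \| \mathfrak{D}\cdot\mathfrak{q} \|_{L^2(\mathfrak{B})} \| \mathfrak{w} \|_{L^2(\mathfrak{B})} \leq \| \mathfrak{q} \|_{H(\ddiv,\mathfrak{B})} \| \mathfrak{w} \|_{L^2(\mathfrak{B})},
\end{equation*}
where the last inequality again uses \eqref{eqn:norm-Hdiv} together with $\| \mathfrak{q} \|_{\mathfrak{K}^{-1}}^2 \geq 0$. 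The only point requiring a word of care — and the closest thing to an ``obstacle'' here — is making explicit that $(\cdot,\cdot)$ really is an inner product on each space, i.e.\ that the weighted form is positive definite (non-degenerate) in the re-scaled setting; this was already flagged after \eqref{eqn:weighted-norm}, so I would simply cite that observation rather than re-derive it. No estimate on the off-block coupling terms is needed, because the whole subtlety of the degeneracy has been absorbed into the operator $\mathfrak{D}^S$ rather than the material law.
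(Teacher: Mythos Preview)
Your proposal is correct and follows exactly the paper's approach: the paper's own proof consists of a single sentence stating that both estimates follow directly from the Cauchy--Schwarz inequality and the definitions of the norms~\eqref{eqn:norm-L2} and~\eqref{eqn:norm-Hdiv}. Your write-up simply unpacks that sentence, and the extra remark about non-degeneracy of the weighted form in the re-scaled setting is consistent with the paper's comment after~\eqref{eqn:weighted-norm}.
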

\begin{proof}
The continuity of both bilinear forms follow directly from the Cauchy-Schwarz inequality and the definition of the norms~\eqref{eqn:norm-Hdiv} and~\eqref{eqn:norm-L2}.
\end{proof}

Now we show the ellipticity of the inner product $(\cdot, \cdot)_{\mathfrak{K}^{-1}}$ on the kernel of the mixed-dimensional divergence operator $\mathfrak{D} \cdot$ in the following lemma.
\begin{lemma}[Ellipticity of~\eqref{eqn:weak-form-1}-\eqref{eqn:weak-form-2}] \label{lem:ellipticity}
If $\mathfrak{q} \in H(\ddiv, \mathfrak{B})$ satisfies
\begin{equation} \label{eqn:null-b}
(\mathfrak{D}\cdot \mathfrak{q}, \mathfrak{w}) = 0, \quad \forall \, \mathfrak{w} \in L^2(\mathfrak{B}),
\end{equation} 
then
\begin{equation} \label{eqn:ellipticity}
(\mathfrak{q}, \mathfrak{q})_{\mathfrak{K}^{-1}} = \| \mathfrak{q} \|_{H(\ddiv, \mathfrak{B})}^2.
\end{equation}
\end{lemma}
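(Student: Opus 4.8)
The plan is to observe that the claimed identity \eqref{eqn:ellipticity} follows immediately from the definition of the $H(\ddiv,\mathfrak{B})$-norm \eqref{eqn:norm-Hdiv} once we know that $\mathfrak{D}\cdot\mathfrak{q} = 0$. Indeed, \eqref{eqn:norm-Hdiv} reads $\|\mathfrak{q}\|_{H(\ddiv,\mathfrak{B})}^2 = \|\mathfrak{q}\|_{\mathfrak{K}^{-1}}^2 + \|\mathfrak{D}\cdot\mathfrak{q}\|_{L^2(\mathfrak{B})}^2$, and $\|\mathfrak{q}\|_{\mathfrak{K}^{-1}}^2 = (\mathfrak{q},\mathfrak{q})_{\mathfrak{K}^{-1}}$ by \eqref{eqn:weighted-norm}; so the statement is equivalent to showing that the hypothesis \eqref{eqn:null-b} forces $\|\mathfrak{D}\cdot\mathfrak{q}\|_{L^2(\mathfrak{B})} = 0$.

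First I would note that, by definition \eqref{eqn:mixed-divergence1}, $\mathfrak{D}\cdot\mathfrak{q}$ is an element of $L^2(\mathfrak{B})$: its continuum component $u^D = \nabla\cdot\bm{q}^D - \sum_{i\in\mathcal{N}_T} k_i^S q_i^S$ lies in $L^2(\Omega)$ because $\bm{q}^D\in H(\ddiv,\Omega)$ and each $q_i^S\in L^2(B_i)$ with $k_i^S$ bounded, and its graph component $u^N$ lies in $l^2(\mathcal{N}\backslash\mathcal{N}_D)$ since it is a finite combination of the $q^N$ and the integrals of the $q_i^S$. Hence $\mathfrak{D}\cdot\mathfrak{q}$ is itself an admissible test function $\mathfrak{w}$ in \eqref{eqn:null-b}. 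Choosing $\mathfrak{w} = \mathfrak{D}\cdot\mathfrak{q}$ gives $(\mathfrak{D}\cdot\mathfrak{q},\mathfrak{D}\cdot\mathfrak{q}) = 0$, i.e. $\|\mathfrak{D}\cdot\mathfrak{q}\|_{L^2(\mathfrak{B})}^2 = 0$ by the norm definition \eqref{eqn:norm-L2}. Substituting back into \eqref{eqn:norm-Hdiv} yields $\|\mathfrak{q}\|_{H(\ddiv,\mathfrak{B})}^2 = \|\mathfrak{q}\|_{\mathfrak{K}^{-1}}^2 = (\mathfrak{q},\mathfrak{q})_{\mathfrak{K}^{-1}}$, which is exactly \eqref{eqn:ellipticity}.

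There is no real obstacle here; the only point requiring a moment's care is the implicit claim that $\mathfrak{D}\cdot$ maps $H(\ddiv,\mathfrak{B})$ into $L^2(\mathfrak{B})$, so that the test function $\mathfrak{w}=\mathfrak{D}\cdot\mathfrak{q}$ is legitimate — this is precisely what the definition of the space $H(\ddiv,\mathfrak{B})$ was arranged to guarantee, and it is worth stating explicitly before invoking \eqref{eqn:null-b}. One might also remark that this is why the lemma is phrased as an identity rather than an inequality: on the kernel of $\mathfrak{D}\cdot$ the two norms literally coincide, so the Babu\v{s}ka--Brezzi coercivity constant on the kernel is simply $1$, uniformly in the (possibly degenerate) data $k^S$, since the weight attached to $q^S$ in $(\mathfrak{K}^S)^{-1}$ is unity by \eqref{eqn:kS}.
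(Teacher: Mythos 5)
Your proof is correct and follows exactly the paper's own argument: since $\mathfrak{D}\cdot\mathfrak{q}\in L^2(\mathfrak{B})$, it may be taken as the test function in the hypothesis, forcing $\|\mathfrak{D}\cdot\mathfrak{q}\|_{L^2(\mathfrak{B})}=0$, after which the identity follows from the definition of the norm. Your additional remarks justifying that $\mathfrak{D}\cdot$ maps into $L^2(\mathfrak{B})$ and on the kernel coercivity constant being $1$ are accurate elaborations of what the paper leaves implicit.
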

\begin{proof}
Since $\mathfrak{D}\cdot \mathfrak{q} \in L^2(\mathfrak{B})$, from~\eqref{eqn:null-b}, we have 
\begin{equation*}
\| \mathfrak{D}\cdot \mathfrak{q} \|_{L^2(\mathfrak{B})} = 0.
\end{equation*}
Therefore, \eqref{eqn:ellipticity} follows directly from the above identity and the definition of the norm~\eqref{eqn:norm-Hdiv}.
\end{proof}

Next, we discuss the inf-sup condition of the bilinear form $(\mathfrak{r}, \mathfrak{D} \cdot \mathfrak{q})$ in the following lemma.
\begin{lemma}[Inf-sup condition of~\eqref{eqn:weak-form-1}-\eqref{eqn:weak-form-2}] \label{lem:inf-sup}
	There exists a constant $\beta > 0$ such that, for any given function $\mathfrak{r}\in L^2(\mathfrak{B})$,
	\begin{equation}\label{ine:inf-sup}
	\sup_{\mathfrak{q}\in H_0(\ddiv,\mathfrak{B})} \frac{(\mathfrak{r}, \mathfrak{D} \cdot \mathfrak{q})}{ \| \mathfrak{q} \|_{H(\ddiv,\mathfrak{B})} } \geq \beta \| \mathfrak{r} \|_{L^2(\mathfrak{B})}.
	\end{equation}
	Here, the inf-sup constant $\beta$ depends on  $|B_i| = \operatorname{measure}(B_i)$, the maximal number of overlaps between $B_i$, structure of the trees $\mathcal{T} \in \mathcal{F}$, the domain $\Omega$, and the constants $c_{k^S}$ and $C_{k^S}$. 
\end{lemma}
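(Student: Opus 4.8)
The plan is to construct, for a given $\mathfrak{r} = [r^D, r^N] \in L^2(\mathfrak{B})$, an explicit flux $\mathfrak{q} \in H_0(\ddiv, \mathfrak{B})$ satisfying $\mathfrak{D}\cdot\mathfrak{q} = \mathfrak{r}$ together with a bound $\|\mathfrak{q}\|_{H(\ddiv,\mathfrak{B})} \le C \|\mathfrak{r}\|_{L^2(\mathfrak{B})}$; taking $\mathfrak{w} = \mathfrak{r}$ in the supremum then gives $(\mathfrak{r},\mathfrak{D}\cdot\mathfrak{q})/\|\mathfrak{q}\|_{H(\ddiv,\mathfrak{B})} \ge \|\mathfrak{r}\|_{L^2(\mathfrak{B})}^2 / (C\|\mathfrak{r}\|_{L^2(\mathfrak{B})}) = \beta\|\mathfrak{r}\|_{L^2(\mathfrak{B})}$ with $\beta = 1/C$. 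Since $\mathfrak{D}\cdot\mathfrak{q} = \mathfrak{r}$ forces $\|\mathfrak{D}\cdot\mathfrak{q}\|_{L^2(\mathfrak{B})} = \|\mathfrak{r}\|_{L^2(\mathfrak{B})}$, by the definition~\eqref{eqn:norm-Hdiv} it suffices to control $\|\mathfrak{q}\|_{\mathfrak{K}^{-1}}$, i.e.\ the three weighted flux components $\bm{q}^D$, $q^S$, $q^N$, by $\|\mathfrak{r}\|_{L^2(\mathfrak{B})}$.

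The construction proceeds in stages, moving information from the continuum and the graph toward a common meeting point. First I would handle the node equations on the graph by a downward sweep from root to terminals: because each $\mathcal{T}$ is a tree, the conservation law $-\sum_{j\in\mathcal{N}_i} q^N_{j,i} = r^N_i$ at interior and Neumann nodes determines each edge flux $q^N_{i,j}$ (oriented root-to-leaf) as the sum of $r^N$-values over the subtree below that edge, plus the net terminal demand $\int_{B_i} k_i^S q_i^S$ below it. The standard graph estimate gives $\|q^N\|_{l^2(\mathcal{E})} \le C_\mathcal{F}\big(\|r^N\|_{l^2(\mathcal{N}\setminus\mathcal{N}_D)} + \|(\text{terminal fluxes})\|\big)$, where $C_\mathcal{F}$ encodes the tree depth/branching (a discrete Poincaré-type constant). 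Second, at each terminal $i$ I would choose $q_i^S$ supported in $B_i$ so that $\int_{B_i} k_i^S q_i^S\,\mathrm{d}\bm{x}$ equals the prescribed value $a_i := r^N_i + q^N_{\mathcal{N}_i,i}$; the natural choice $q_i^S = (a_i/c_{k_i^S})\,\mathbf{1}_{B_i}$ works because $\int_{B_i} k_i^S\,\mathrm{d}\bm{x} = c_{k_i^S} \ge c_{k^S} > 0$, and then $\|q_i^S\|_{L^2(B_i)}^2 = |a_i|^2 |B_i| / c_{k_i^S}^2$, which is controlled using $c_{k_i^S} \ge c_{k^S}$ and $|B_i|$. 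Note $q^S$ and $q^N$ are coupled through $a_i$, so one actually sets up a small fixed-point / linear system over the terminals; since the downward sweep is lower-triangular in the subtree order this is solvable and the bound closes. Finally, for the continuum I would solve an auxiliary pure-Neumann Darcy problem $\nabla\cdot\bm{q}^D = r^D + \sum_{i\in\mathcal{N}_T} k_i^S q_i^S =: f$ in $\Omega$ with $\bm{q}^D\cdot\bm{n}=0$ on $\partial\Omega$; this is the classical surjectivity of $\ddiv: H_0(\ddiv,\Omega)\to L^2(\Omega)$ and is solvable \emph{exactly because the graph-Stokes computation in the text shows the compatibility condition $\int_\Omega f\,\mathrm{d}\bm{x}$ is consistent} — wait, more carefully, one must first split off the mean: $\ddiv$ from $H_0(\ddiv,\Omega)$ onto $L^2(\Omega)$ is in fact surjective onto all of $L^2(\Omega)$ when one does not require $\bm{q}^D\cdot\bm{n}=0$ globally, so I would instead allow a correction, or—cleaner—absorb the constant-mode obstruction into the $l^2(\mathcal{N}\setminus\mathcal{N}_D)$ part of $\mathfrak{r}$ using the Dirichlet nodes $\mathcal{N}_D$ (which break the pure-Neumann degeneracy of the coupled system). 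This gives $\|\bm{q}^D\|_{H(\ddiv,\Omega)} \le C_\Omega \|f\|_{L^2(\Omega)} \le C_\Omega(\|r^D\|_{L^2(\Omega)} + C_{k^S}\sqrt{\textstyle\sum_i |B_i|}\,\|q^S\|)$.

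Collecting the three bounds and using $C_{k^S}$, the overlap count of the $B_i$ (to pass from $\sum_i\|\cdot\|_{L^2(B_i)}^2$ to $\|\cdot\|_{L^2(\Omega)}^2$ when summing $\sum_i k_i^S q_i^S$), $c_{k^S}$, $|B_i|$, the tree constant $C_\mathcal{F}$, and $C_\Omega$, yields the claimed dependence of $\beta$. I expect the main obstacle to be the pure-Neumann compatibility issue for $\bm{q}^D$: the operator $\mathfrak{D}\cdot$ from $H_0(\ddiv,\mathfrak{B})$ is \emph{not} surjective onto all of $L^2(\mathfrak{B})$ in the fully-Neumann case without care, so the argument must exploit the presence of the Dirichlet root nodes $\mathcal{N}_D$ (assumed non-empty) to route the excess mass through the graph to a node where the pressure is pinned — equivalently, one solves the graph sweep with the Dirichlet nodes as free "sinks" and only then solves the Darcy subproblem, whose data is then automatically mean-zero or is made so by this routing. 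Keeping the constants in this coupled construction uniform (independent of mesh size $h$, hence of the number and size of the $B_i$ beyond the stated geometric quantities) is the delicate bookkeeping that the lemma's constant list is warning about.
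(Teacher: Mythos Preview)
Your proposal is correct and follows essentially the same three-stage construction as the paper: build $q^N$ on the forest, then $q_i^S = a_i\, c_{k_i^S}^{-1}\,\mathbf{1}_{B_i}$ at each terminal, then $\bm{q}^D$ from an auxiliary Neumann Darcy problem, with the Dirichlet-rooted trees absorbing the global mass defect so that the Darcy data is mean-zero.

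The one point where the paper is cleaner than your sketch is the decoupling you worried about. You describe $q^N$ and $q^S$ as coupled through $a_i = r_i^N + q^N_{\mathcal{N}_i,i}$ and invoke a small linear system; the paper instead observes that the terminal edge fluxes $q^N_{\mathcal{N}_i,i}$ are \emph{free parameters}, because for any such value the terminal conservation law can be satisfied a posteriori by your choice of $q_i^S$. It therefore fixes these terminal fluxes \emph{first} --- on Neumann trees so that each tree's data sums to zero, and on Dirichlet trees so as to absorb $\int_\Omega r^D$ plus the Neumann-forest totals --- then solves a weighted graph Laplacian for the remaining interior edge fluxes, then defines $q^S$, then solves the (now compatible) Neumann Darcy problem. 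This makes the construction strictly sequential and the bookkeeping of constants (including the $(k^N)^{-1}$ weighting in $\|\cdot\|_{\mathfrak{K}^{-1}}$, which enters through the smallest nonzero eigenvalue of the weighted graph Laplacian) straightforward.
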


\begin{proof}
Assume $\mathfrak{r} = [r^D, r^N] \in L^2(\mathfrak{B})$ given, we first aim to construct $\mathfrak{q} = [\bm{q}^D, q^S, q^N] \in H_0(\ddiv,\mathfrak{B})$ such that $\mathfrak{D} \cdot [\bm{q}^D, q^S, q^N] = [r^D, r^N]$.  

First step is to construct $q^N$ based on the forest~$\mathcal{F}$.  Based on the signed incidence matrix $\mathcal{G}$~\eqref{eqn:incidence-matrix-forest}, we omit those columns that correspond to the Dirichlet root nodes to obtain the signed incidence matrix with boundary conditions $\mathcal{G}_{\mathcal{F}}$.  Then, we consider the following mixed-formulation graph Laplacian problem: Find $q_{\mathcal{F}} \in \mathbb{R}^{|\mathcal{E}|}$ and $\psi_{\mathcal{F}} \in \mathbb{R}^{|\mathcal{N}|-|\mathcal{N}_D|}$
\begin{eqnarray} 
\mathfrak{K}^{-1} q_{\mathcal{F}} - \mathcal{G}_{\mathcal{F}} \psi_{\mathcal{F}}  &= 0, \label{eqn:mixed-graph-laplacian-flux} \\
\mathcal{G}^T_{\mathcal{F}} q_{\mathcal{F}} &= r_{\mathcal{F}}. \label{eqn:mixed-graph-laplacian-pressure}
\end{eqnarray}
Here, for trees $\mathcal{T} \in \mathcal{F}_N$, we set $(r_{\mathcal{F}})_i = r_i^N$, $i \in \mathcal{N}_{\mathcal{T}, N} \cup \mathcal{N}_{\mathcal{T},I}$, and for $i \in \mathcal{N}_{\mathcal{T}, T}$, we choose $(r_{\mathcal{F}})_i$ such that $\sum_{i \in \mathcal{N}_{\mathcal{T}}} (r_{\mathcal{F}})_i = 0$.  The choice is not unique, and here we choose
\begin{equation} \label{eqn:r_i-on-terminal-node}
(r_{\mathcal{F}})_i = r_i^N - \frac{\sum_{i \in \mathcal{N}_{\mathcal{T}}} r_i^N }{|\mathcal{N}_{\mathcal{T},T}|}, \quad i \in \mathcal{N}_{\mathcal{T},T}, \quad \mathcal{T} \in \mathcal{F}_N.
\end{equation}
For trees $\mathcal{T} \in \mathcal{F}_D$, we set $(r_{\mathcal{F}})_i = r_i^N$, $i \in \mathcal{N}_{\mathcal{T},T} \cup \mathcal{N}_{\mathcal{T},I}$, and, for $i \in \mathcal{N}_{\mathcal{T},T}$, we set
\begin{equation*}
(r_{\mathcal{F}})_i = r_i^N + \frac{1}{| \mathcal{N}_{\mathcal{F}_D,T} |} \int_{\Omega} r^D \ \mathrm{d} \bm{x} + \frac{1}{|\mathcal{N}_{\mathcal{F}_D, T}|} \sum_{i \in \mathcal{N}_{\mathcal{F}_N}}r_i^N, \quad i \in \mathcal{N}_{\mathcal{F}_D,T}.
\end{equation*}
The reason of such a choice will be made clear later in the proof when we construct $\bm{q}^D$. Note that, since the degree of node $i \in \mathcal{N}_{\mathcal{T},T}$ is one, once $(r_{\mathcal{F}})_i$ is fixed, we natrually have $(q_{\mathcal{F}})_{e(\mathcal{N}_i,i)} = - (r_{\mathcal{F}})_i$.  With this choice of $r_{\mathcal{F}}$, the mixed-formulation graph Laplacian problem~\eqref{eqn:mixed-graph-laplacian-flux}-\eqref{eqn:mixed-graph-laplacian-pressure} is well-posed in the sense that $\psi_{\mathcal{F}}$ is unique (up to a constant on the trees $\mathcal{T} \in \mathcal{F}_N$) and $q_{\mathcal{F}}$ is uniquely defined.  Once $q_{\mathcal{F}}$ is obtained, we define $q^N$ by $q^N_{i,j} = (q_{\mathcal{F}})_{e(i,j)}$, $e(i,j) \in \mathcal{E}$. 

From the mixed-formulation~\eqref{eqn:mixed-graph-laplacian-flux}-\eqref{eqn:mixed-graph-laplacian-pressure}, we have the following estimates,
\begin{equation}\label{ine:estimates-graph}
\| \mathcal{G}_{\mathcal{F}}^T q_{\mathcal{F}} \|^2 = \| r_{\mathcal{F}} \|^2 \quad \text{and} \quad (\mathfrak{K}^{-1}q_{\mathcal{F}}, q_{\mathcal{F}}) \leq (\lambda_{\min}^{\mathcal{F}})^{-1} \| r_{\mathcal{F}} \|^2,
\end{equation}
where $\lambda_{min}^{\mathcal{F}}$ is the smallest non-zero eigenvalue of the weighted graph Laplaican of the forest $\mathcal{F}$, i.e, $\mathcal{L}_{\mathcal{F}} = \mathcal{G}_{\mathcal{F}}^T \mathfrak{K} \mathcal{G}_{\mathcal{F}}$.  We comment that $\lambda_{\min}^{\mathcal{F}}$ is bounded below by the so-called Cheeger constant of the graph, so depends on the structure of the trees $\mathcal{T}$ in the forest $\mathcal{F}$. Note that 
\begin{equation*}
\| r_{\mathcal{F}} \|^2 = \sum_{i \in \mathcal{N}_I \cup \mathcal{N}_N} (r_i^N)^2 + \sum_{i \in \mathcal{N}_{\mathcal{F}_D,T}} ((r_{\mathcal{F}})_i)^2 + \sum_{i \in \mathcal{N}_{\mathcal{F}_N,T}} ((r_{\mathcal{F}})_i)^2
\end{equation*}
and, due the choice~\eqref{eqn:r_i-on-terminal-node}, the last term on the right-hand-side can be bounded by
\begin{equation}\label{ine:estimates-terminal-node-N}
\sum_{i \in \mathcal{N}_{\mathcal{F}_N,T}} ((r_{\mathcal{F}})_i)^2 \leq C_N \sum_{i \in \mathcal{N}_{\mathcal{F}_N}} (r_i^N)^2
\end{equation}
with $C_N = 2 \left(\max_{\mathcal{T} \in \mathcal{F}_N} \frac{|\mathcal{N}_{\mathcal{T}}|}{|\mathcal{N}_{\mathcal{T},T}|} + 1 \right)$. Similarly, by Cauchy-Schwarz inequality, the second term on the right-hand-side can be bounded as follows,
\begin{equation} \label{ine:estimates-terminal-node-D}
\sum_{i \in \mathcal{N}_{\mathcal{F}_D,T}} ((r_{\mathcal{F}})_i)^2 \leq C_D \left[ \sum_{i \in \mathcal{N}_{\mathcal{F}_D,T}} (r_i^N)^2 + \sum_{i \in \mathcal{N}_{\mathcal{F}_N}} (r_i^N)^2 + \int_{\Omega} (r^D)^2 \, \mathrm{d} \bm{x} \right],
\end{equation}
where $C_D = 3 \max \Big\{ 1, \frac{|\Omega|}{|\mathcal{N}_{\mathcal{F}_D,T}|}, \frac{|\mathcal{N}_{\mathcal{F}_N}|}{|\mathcal{N}_{\mathcal{F}_D, T}|}  \Big\}$.

 Therefore, combining the estimates~\eqref{ine:estimates-graph}, \eqref{ine:estimates-terminal-node-N}, \eqref{ine:estimates-terminal-node-D}, and the definitions of $\mathcal{G}_{\mathcal{F}}$ and $q^N$, the following estimate holds,
 \begin{align} 
 \quad \sum_{e(i,j) \in \mathcal{E}} (k^N_{e(i,j)})^{-1} |q_{i,j}^N|^2 \leq C_{q^N} \left[  \sum_{i \in \mathcal{N}_I \cup \mathcal{N}_N}(r_i^N)^2  + \sum_{i \in \mathcal{N}_{T}} (r_i^N)^2  + \int_{\Omega} (r^D)^2 \, \mathrm{d} \bm{x}  \right],  \label{ine:estimate-qN}
 \end{align}  
 where $C_{q^N} = 
 (\lambda_{\min}^{\mathcal{F}})^{-1}(C_N+C_D+1)$.

Next we construct $q^S$ from $q^N$ and $r^N$ so that~\eqref{eqn:conservation-term-node-scale} is satisfied exactly, i.e., we define, for each terminal nodes $i \in \mathcal{N}_T$, 
\begin{equation} \label{eqn:qS}
q_i^S(\bm{x}) = \frac{q_{\mathcal{N}_i,i}^N + r_i^N}{ c_{k_i}}, \qquad \bm{x} \in B_i.
\end{equation}
From the construction, we have
\begin{align} 
\sum_{i \in \mathcal{N}_T} \int_{B_i} |q_i^S(\bm{x})|^2 \, \mathrm{d}\bm{x}  &\leq \frac{2 |B_i|}{c_{k^S}^2} \left[  \sum_{i\in \mathcal{N}_T} |r_i^N|^2 +\sum_{i \in \mathcal{N}_T} |q^N_{\mathcal{N}_i,i}|^2  \right] \nonumber \\
& \leq C^1_{q^S} \left[  \sum_{i\in \mathcal{N}_T} |r_i^N|^2 +\sum_{i \in \mathcal{N}_{\mathcal{F}_N}} |r_i^N|^2  + \int_{\Omega} (r^D)^2 \, \mathrm{d} \bm{x} \right],  \label{ine:estimate-qS-1}
\end{align}
where $C_{q^S}^1 = \frac{2|B_i|}{c_{k^S}^2} (C_N + C_D)$. Here we use the fact that $q^N_{N_i,i} = - (r_{\mathcal{F}})_i$ for $i \in \mathcal{N}_{T}$ by our construction of $q^N$, and the estimates~\eqref{ine:estimates-terminal-node-N} and~\eqref{ine:estimates-terminal-node-D} in the last step. Similarly, we also have
\begin{equation} \label{ine:estimate-qS-2}
\sum_{i \in \mathcal{N}_T}\int_{B_i} |k_i^S q_i^S|^2 \, \mathrm{d}\bm{x} \leq C_{q^S}^2 \left[  \sum_{i\in \mathcal{N}_T} |r_i^N|^2 +\sum_{i \in \mathcal{N}_{\mathcal{F}_N}} |r_i^N|^2  + \int_{\Omega} (r^D)^2 \, \mathrm{d} \bm{x}  \right]
\end{equation}
with $C_{q^S}^2 = 2\frac{C_{k^S}}{c_{k^S}} (C_N+C_D+1) $.
 

Finally, we consider the following mixed-formulation Laplacian problem 
\begin{align}
(k^D)^{-1} \bm{q}^D + \nabla \psi &= 0 \label{eqn:mix-fem-qN-1} \\
\nabla \cdot \bm{q}^D  &= r^D + \sum_{i \in \mathcal{N}_T} k_i^S q_i^S  \label{eqn:mix-fem-qN-2}
\end{align}
with boundary condition $\bm{q}^D \cdot \bm{n} = 0$ on $\partial \Omega$. 
This problem is well-posed because 
\begin{align*}
&\quad \int_{\Omega} r^D \, \mathrm{d} \bm{x} + \sum_{ i \in \mathcal{N}_T} \int_{B_i} k_i^S(\bm{x})q_i^S(\bm{x}) \, \mathrm{d} \bm{x} \\
& = \int_{\Omega}r^D \, \mathrm{d} \bm{x} + \sum_{i \in \mathcal{N}_T} \int_{B_i} k_i^S(\bm{x})\frac{q_{\mathcal{N}_i,i}^N + r_i^N}{c_{k_i}} \, \mathrm{d} \bm{x} \\
& = \int_{\Omega} r^D \, \mathrm{d} \bm{x} + \sum_{i \in \mathcal{N}_T} \left( q_{\mathcal{N}_i,i}^N + r_i^N \right) \\
& = \int_{\Omega} r^D \, \mathrm{d} \bm{x} + \sum_{i \in \mathcal{N}_{\mathcal{F}_N,T}} \left( q_{\mathcal{N}_i,i}^N + r_i^N \right)  + \sum_{i \in \mathcal{N}_{\mathcal{F}_D,T}} \left( q_{\mathcal{N}_i,i}^N + r_i^N \right)  \\
& = \int_{\Omega} r^D \, \mathrm{d} \bm{x} + \sum_{\mathcal{T} \in \mathcal{F}_N} \sum_{i \in \mathcal{N_T}} \left( -r_i^N + \frac{\sum_{i \in \mathcal{N_T}}r_i^N}{|\mathcal{N}_{\mathcal{T},T}|} + r_i^N \right) \\
& \quad + 
\sum_{i \in \mathcal{N}_{\mathcal{F}_D,T}} \left(  -r_i^N - \frac{1}{| \mathcal{N}_{\mathcal{F}_D,T} |} \int_{\Omega} r^D \ \mathrm{d} \bm{x} - \frac{1}{|\mathcal{N}_{\mathcal{F}_D, T}|} \sum_{i \in \mathcal{N}_{\mathcal{F}_N}}r_i^N + r_i^N \right) \\
& = 0,
\end{align*}
which verifies the consistency of the data with respect to the pure Nuemann boundary condition $\bm{q}^D \cdot \bm{n} = 0$ on $\partial \Omega$. Furthermore, the following estimate holds,
\begin{align}
\int_{\Omega} |\nabla \cdot \bm{q}^D|^2 \,\mathrm{d} \bm{x} &= \int_{\Omega} | r^D + \sum_{i \in \mathcal{N}_T}k_i^S q_i^S|^2 \, \mathrm{d} \bm{x} \nonumber \\
& \leq 2 \int_{\Omega} |r^D|^2 \mathrm{d} \bm{x} + 2 N_{B_i} \sum_{i \in \mathcal{N}_T} \int_{B_i} |k_i^Sq_i^S|^2 \, \mathrm{d} \bm{x} \nonumber \\
& \leq C_{\bm{q}^D}^1 \left[ \int_{\Omega} |r^D|^2 \, \mathrm{d}\bm{x} + \sum_{i \in \mathcal{N}_T} |r_i^N|^2 + \sum_{i \in \mathcal{N}_{\mathcal{F}_N}} |r_i^N|^2  \right] \label{ine:estimate-qD-1}
\end{align}
where $C_{\bm{q}^D}^1 = 2 \left( N_{B_i} C_{q^S}^2  +1 \right)$ and $N_{B_i}$ is the maximal number of the overlapping between the $B_i$, $i \in \mathcal{N}_T$.  Similarly, we also have
\begin{align}
\int_{\Omega} (k^D)^{-1} |\bm{q}^D|^2 \, \mathrm{d} \bm{x} &\leq C_p^{-1} \int_{\Omega} |r^D + \sum_{i \in \mathcal{N}_T} k_i^S q_i^S |^2 \, \mathrm{d} \bm{x} \nonumber \\
& \leq C_{\bm{q}^D}^2  \left[ \int_{\Omega} |r^D|^2 \, \mathrm{d}\bm{x} + \sum_{i \in \mathcal{N}_T} |r_i^N|^2 + \sum_{i \in \mathcal{N}_{\mathcal{F}_N}} |r_i^N|^2  \right] \label{ine:estimate-qD-2}
\end{align}
where $C_{\bm{q}^D}^2 = C_p^{-1} C_{\bm{q}^D}^1$ and $C_p$ is the weighted Poincare constant, i.e, $C_p (v,v) \leq ((k^D) \nabla v, \nabla v)$.  

%

Now $[\bm{q}^D, q^S, q^N]$ has been constructed based on $[r^D, r^N]$ and it satisfies 
\begin{equation} \label{eqn:Dq=r}
\mathfrak{D} \cdot [\bm{q}^D, q^S, q^N] = [r^D, r^N],
\end{equation}
and we have
\begin{align*}
\| [\bm{q}^D, q^S, q^N] \|^2_{H(\ddiv,\mathfrak{B})} &= \| [\bm{q}^D, q^S, q^N] \|^2_{\mathfrak{K}^{-1}} + \| \mathfrak{D} \cdot [\bm{q}^D, q^S, q^N] \|^2_{\mathfrak{L^2}} \\
& = \int_{\Omega} (k^D)^{-1} |\bm{q}^D|^2 \, \mathrm{d} \bm{x} + \sum_{i \in \mathcal{N}_T} \int_{B_i} |q_i^S|^2 \, \mathrm{d}\bm{x} \\
& \quad + \sum_{e(i,j)\in \mathcal{E}} (k^N_{e(i,j)})^{-1} |q^N_{i,j}|^2 
 + \| [r^D,r^N ]\|_{L^2(\mathfrak{B})}.
\end{align*}
Now, based on~\eqref{ine:estimate-qN}, \eqref{ine:estimate-qS-1}, and \eqref{ine:estimate-qD-2},  we can derive that
\begin{equation}
\| [\bm{q}^D, q^S, q^N] \|^2_{H(\ddiv,\mathfrak{B})} \leq C_{\beta}  \| [r^D, r^N] \|^2_{L^2(\mathfrak{B})} 
\end{equation}
with $C_\beta = 2C_{\bm{q}^D}^2 + 2 C_{q^S}^1 + C_{q^N} + 1$.
Then the inf-sup condition~\eqref{ine:inf-sup} hold with $\beta = C_{\beta}^{-1}$.

\end{proof}

\begin{remark}\label{rem:scaling}
The inf-sup proof shows the importance of the using the scaled equations \eqref{eqn:conservation-scaled-md} and \eqref{eqn:potential-flow-scaled-md} in the case where $k^T$ goes to zero. Indeed, for the non-scaled equations, a similar approach would lead to an inf-sup constant depending on the pointwise lower bound on $\inf_{\bm{x}\in B_i}(k^T_i(\bm{x}))$, which may not be positive. In contrast, as seen in the proof above, for the scaled equations, inf-sup constant depends on the much less restrictive integrated bound $c_{k_i^S}$.
\end{remark}

We now have the following well-posedness results. 
\begin{theorem}[Well-posedness of~\eqref{eqn:weak-form-1}-\eqref{eqn:weak-form-2}] \label{thm:wellposedness}
The weak formulation~\eqref{eqn:weak-form-1} and~\eqref{eqn:weak-form-2} is well-posed with respect to the norms~\eqref{eqn:norm-Hdiv} and~\eqref{eqn:norm-L2}.
\end{theorem}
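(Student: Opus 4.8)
The plan is to recognize \eqref{eqn:weak-form-1}--\eqref{eqn:weak-form-2} as a canonical saddle-point (mixed) problem and to invoke the Babu\v{s}ka--Brezzi theory. Setting $a(\mathfrak{q},\mathfrak{v}) := (\mathfrak{q},\mathfrak{v})_{\mathfrak{K}^{-1}}$ on $H_0(\ddiv,\mathfrak{B})\times H_0(\ddiv,\mathfrak{B})$ and $b(\mathfrak{q},\mathfrak{w}) := -(\mathfrak{D}\cdot\mathfrak{q},\mathfrak{w})$ on $H_0(\ddiv,\mathfrak{B})\times L^2(\mathfrak{B})$, the system has exactly the form to which Brezzi's theorem applies, so well-posedness will follow once the four standard ingredients are in place: Hilbert-space structure of $H_0(\ddiv,\mathfrak{B})$ and $L^2(\mathfrak{B})$, continuity of $a$ and $b$, coercivity of $a$ on $\ker b$, and the inf-sup condition for $b$. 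The last three are precisely Lemmas \ref{lem:continuity}, \ref{lem:ellipticity}, and \ref{lem:inf-sup}, so most of the work is already done.

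First I would record that $L^2(\mathfrak{B})$ and $H_0(\ddiv,\mathfrak{B})$ are Hilbert spaces under the norms \eqref{eqn:norm-L2} and \eqref{eqn:norm-Hdiv}: each is a finite Cartesian product of Hilbert spaces — $L^2(\Omega)$ and the finite-dimensional $l^2(\mathcal{N}\setminus\mathcal{N}_D)$ in the first case, $H(\ddiv,\Omega)$, finitely many $L^2(B_i)$, and the finite-dimensional $l^2(\mathcal{E})$ in the second — and $H_0(\ddiv,\Omega)$ is a closed subspace of $H(\ddiv,\Omega)$, hence complete. I would also note that $\|\cdot\|_{\mathfrak{K}^{-1}}$ is equivalent to the unweighted $L^2$-type norm on fluxes: the weight on $q^S$ is unity by \eqref{eqn:kS}, $k^D$ is uniformly elliptic, and the $k^N_{e(i,j)}$ are positive over the finite edge set. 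Consequently \eqref{eqn:norm-Hdiv} is genuinely a norm and the estimates of the preceding lemmas are with respect to honest norms.

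Next I would invoke the three lemmas directly. Lemma \ref{lem:continuity} gives boundedness of $a$ and $b$ with constant $1$; Lemma \ref{lem:ellipticity} gives coercivity of $a$ on $\ker b = \{\mathfrak{q} : (\mathfrak{D}\cdot\mathfrak{q},\mathfrak{w})=0 \ \forall\, \mathfrak{w}\in L^2(\mathfrak{B})\}$ — in fact the stronger identity $(\mathfrak{q},\mathfrak{q})_{\mathfrak{K}^{-1}} = \|\mathfrak{q}\|_{H(\ddiv,\mathfrak{B})}^2$ there, so the coercivity constant is $1$; and Lemma \ref{lem:inf-sup} supplies the inf-sup condition with constant $\beta>0$. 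Finally, for any $\mathfrak{r}\in L^2(\mathfrak{B})$ the functional $\mathfrak{w}\mapsto -(\mathfrak{r},\mathfrak{w})$ is bounded on $L^2(\mathfrak{B})$ by Cauchy-Schwarz, and the right-hand side of \eqref{eqn:weak-form-1} vanishes. Brezzi's theorem then yields a unique pair $(\mathfrak{q},\mathfrak{p})\in H_0(\ddiv,\mathfrak{B})\times L^2(\mathfrak{B})$ solving \eqref{eqn:weak-form-1}--\eqref{eqn:weak-form-2}, together with the a priori bound $\|\mathfrak{q}\|_{H(\ddiv,\mathfrak{B})}+\|\mathfrak{p}\|_{L^2(\mathfrak{B})}\le C\,\|\mathfrak{r}\|_{L^2(\mathfrak{B})}$, where $C$ depends only on the continuity, coercivity, and inf-sup constants — hence, through Lemma \ref{lem:inf-sup}, on the stated geometric quantities and on $c_{k^S}$, $C_{k^S}$.

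There is essentially no remaining obstacle: the difficult analytic content — constructing a flux with prescribed mixed-dimensional divergence and the correct energy bound — was already spent in the proof of Lemma \ref{lem:inf-sup}. The only items needing a sentence each in the final write-up are the completeness of the product spaces and the norm equivalence for $\|\cdot\|_{\mathfrak{K}^{-1}}$; everything else is a direct citation of Brezzi's theorem applied to Lemmas \ref{lem:continuity}--\ref{lem:inf-sup}.
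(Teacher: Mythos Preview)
Your proposal is correct and follows exactly the approach in the paper: the paper's own proof consists of a single sentence invoking the standard saddle-point theory together with Lemmas~\ref{lem:continuity}, \ref{lem:ellipticity}, and~\ref{lem:inf-sup}. Your write-up is more explicit about the Hilbert-space structure and norm equivalence, which is fine, but the core argument is identical.
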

\begin{proof}
The result follows directly from the standard theory for saddle point problems, see, e.g.~\cite{boffi2013mixed}, and  Lemmas~\ref{lem:continuity}, \ref{lem:ellipticity}, and~\ref{lem:inf-sup}.
\end{proof}

\section{Finite-element Approximation} \label{sec:FEM}
In this section, we propose the finite-element approximation for solving the weak formulation~\eqref{eqn:weak-form-1}-\eqref{eqn:weak-form-2}. The coupling between the graph and the porous domain, as well as the heterogeneous nature of the parameters found in applications, suggests that it is natural to consider low-order approximations. As a consequence, we only consider the lowest-order approximation here, recognizing that higher-order spaces can be introduced in the mixed formulation.  

\subsection{Mixed Finite-Element Method}
Given a mesh~$\mathcal{M}$ of the domain $\Omega$, e.g., triangles/quadrilaterals in 2D and tetrahedrons/cuboids in 3D, we consider the standard RT0/P0 finite element for approximating the fluid flux $\bm{q}^D$ and pressure $P^D$ in the domain and denote them by $H_{h}(\ddiv, \mathcal{M})$ and $P_0(\mathcal{M})$, respectively.  For node pressure potentials $p^N$, we use vertex degrees of freedom (DOFs) of the graph.  For fluid flux on the tree edges, we use edge DOFs of the graph.  For the fluid flux transferring from terminal $i$ to point $\bm{x}$, it appears natural to consider the piece-wise constant finite element on $\mathcal{M}_i$ (denoted as $P_0(\mathcal{M}_i)$), which is the restriction of $\mathcal{M}$ to $B_i$, i.e. $\mathcal{M}_i=\mathcal{M}\cap B_i$.  In summary, we consider the following conforming finite-element spaces
\begin{equation*}
H_{h}(\ddiv,\mathfrak{B}) :=  H_{h}(\ddiv, \mathcal{M}) \times \prod_{i\in \mathcal{N}_T}P_0(\mathcal{M}_i) \times l^2(\mathcal{E})  \subset H(\ddiv,\mathfrak{B}),
\end{equation*}
its corresponding finite-element space with boundary conditions,
\begin{equation*}
H_{h,0}(\ddiv,\mathfrak{B}) :=  H_{h,0}(\ddiv, \mathcal{M}) \times \prod_{i\in \mathcal{N}_T}P_0(\mathcal{M}_i) \times l^2(\mathcal{E})  \subset H_0(\ddiv,\mathfrak{B}),
\end{equation*}
where $H_{h,0}(\ddiv, \mathcal{M}) := \{ \bm{q}_h^D \in H_h(\ddiv, \mathcal{M})\, | \, \bm{q}_h^D \cdot \bm{n} = 0, \ \text{on} \ \partial \Omega \}$, and 
\begin{equation*}
\mathfrak{L}_h^2 := P_0(\mathcal{M}) \times l^2(\mathcal{N}\backslash \mathcal{N}_D) \subset L^2(\mathfrak{B}).
\end{equation*}
Using the finite-element spaces introduced above, the mixed finite-element approximation of~\eqref{eqn:weak-form-1}-\eqref{eqn:weak-form-2} is: Find~$\mathfrak{q}_h := [\bm{q}_h^D, q_h^S, q_h^N]\in H_{h,0}(\ddiv,\mathfrak{B})$ and $\mathfrak{p}_h := [p_h^D, p_h^N]\in \mathfrak{L}_h^2$, such that
\begin{align}
&(\mathfrak{q}_h, \mathfrak{v}_h)_{\mathfrak{K}^{-1}} - (\mathfrak{p}_h, \mathfrak{D}\cdot \mathfrak{q}_h)  = 0, \quad \forall \ \mathfrak{v}_h \in H_{h,0}(\ddiv,\mathfrak{B}), \label{eqn:mixed-fem-1}  \\
&-(\mathfrak{D} \cdot \mathfrak{q}_h, \mathfrak{w}_h)  = -(\mathfrak{r}, \mathfrak{w}_h), \quad \forall \ \mathfrak{w}_h \in \mathfrak{L}_h^2. \label{eqn:mixed-fem-2}
\end{align}

\begin{remark}
By considering a test function $\mathfrak{w}_h$ which is constant on a $B_i$, we verify from equations  \ref{eqn:conservation-brain-scale} and \ref{eqn:conservation-term-node-scale} that the physical flux $q^T = k_i^S q_i^S$ is conserved. 
We note that the lowest-order mixed finite element approximation is locally conservative even when applied to scaled variables, in contrast to the situation observed when similar scalings are applied in the physical dimensions of $\Omega$ (see e.g. \cite{arbogast2017mixed}). 
\end{remark}

\subsection{Well-posedness} 
In this subsection, we consider the well-posedness of the mixed finite-element approximation~\eqref{eqn:mixed-fem-1}-\eqref{eqn:mixed-fem-2}. It is essentially the same as the well-posedness analysis for the weak formulation in Section~\ref{sec:weak-wellposedness}, and our presentation will therefore be brief. 

Since we use conforming finite-element spaces, the continuity results (Lemma~\ref{lem:continuity}) holds naturally on the discrete level. 
\begin{lemma}[Continuity of~\eqref{eqn:mixed-fem-1}-\eqref{eqn:mixed-fem-2}] \label{lem:fem-continuity}
	For any $\mathfrak{q}_h$, $\mathfrak{v}_h \in H_{h,0}(\ddiv,\mathfrak{B})$ and $\mathfrak{w}_h \in \mathfrak{L}_h^2$, we have
	\begin{align*}
	(\mathfrak{q}_h, \mathfrak{v}_h)_{\mathfrak{K}^{-1}} & \leq \| \mathfrak{q}_h \|_{H(\ddiv,\mathfrak{B})} \| \mathfrak{v}_h \|_{H(\ddiv,\mathfrak{B})}, \\
	(\mathfrak{D} \cdot \mathfrak{q}_h, \mathfrak{w}_h) & \leq   \| \mathfrak{q}_h \|_{H(\ddiv,\mathfrak{B})} \| \mathfrak{w}_h \|_{L^2(\mathfrak{B})}.
	\end{align*}
\end{lemma}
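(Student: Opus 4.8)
The plan is to observe that this is an immediate consequence of Lemma~\ref{lem:continuity}, obtained by mere restriction to the finite-element subspaces. First I would record the conforming inclusions
\begin{equation*}
H_{h,0}(\ddiv,\mathfrak{B}) \subset H_{0}(\ddiv,\mathfrak{B}) \subset H(\ddiv,\mathfrak{B}), \qquad \mathfrak{L}_h^2 \subset L^2(\mathfrak{B}),
\end{equation*}
which hold by construction: RT0 on $\mathcal{M}$ gives $\bm{q}_h^D\in H(\ddiv,\Omega)$, the transfer space $P_0(\mathcal{M}_i)$ sits inside $L^2(B_i)$ since $\mathcal{M}_i$ triangulates $B_i$, and the graph vertex/edge DOF spaces coincide with $l^2(\mathcal{N}\backslash\mathcal{N}_D)$ and $l^2(\mathcal{E})$. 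In particular, for $\mathfrak{q}_h\in H_{h,0}(\ddiv,\mathfrak{B})$ the mixed-dimensional divergence $\mathfrak{D}\cdot\mathfrak{q}_h$ is a well-defined element of $L^2(\mathfrak{B})$, and the norms $\|\cdot\|_{H(\ddiv,\mathfrak{B})}$ and $\|\cdot\|_{L^2(\mathfrak{B})}$ appearing in the statement are exactly the ambient norms~\eqref{eqn:norm-Hdiv} and~\eqref{eqn:norm-L2} restricted to the discrete subspaces.

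Next I would note that the two bilinear forms in~\eqref{eqn:mixed-fem-1}--\eqref{eqn:mixed-fem-2} are literally the restrictions of the forms $(\cdot,\cdot)_{\mathfrak{K}^{-1}}$ and $(\mathfrak{D}\cdot\,\cdot,\cdot)$ from~\eqref{eqn:weak-form-1}--\eqref{eqn:weak-form-2}; no new objects are introduced at the discrete level. Hence the two estimates follow verbatim from Lemma~\ref{lem:continuity} with the choices $\mathfrak{q}=\mathfrak{q}_h$, $\mathfrak{v}=\mathfrak{v}_h$, $\mathfrak{w}=\mathfrak{w}_h$. Equivalently, one may simply repeat the one-line argument of Lemma~\ref{lem:continuity}: apply Cauchy--Schwarz to each of the three components (domain, transfer, edge) of the weighted $L^2$ pairing, and then invoke the definition~\eqref{eqn:norm-Hdiv}, which majorizes both the weighted part $\|\cdot\|_{\mathfrak{K}^{-1}}$ and $\|\mathfrak{D}\cdot\,\cdot\|_{L^2(\mathfrak{B})}$.

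There is essentially no obstacle here; the only point worth a sentence is that conformity must be genuine — that the discrete transfer flux space $\prod_{i\in\mathcal{N}_T}P_0(\mathcal{M}_i)$ really lies in $\prod_{i\in\mathcal{N}_T}L^2(B_i)$ — so that the constant $1$ in the bounds is inherited without change. I would therefore present the proof as the single remark that conformity of the spaces together with the identity of the forms and of the norms reduces the claim to Lemma~\ref{lem:continuity}.
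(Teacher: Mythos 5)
Your proposal is correct and follows exactly the paper's own reasoning: the paper dispenses with this lemma by the single observation that the finite-element spaces are conforming subspaces, so Lemma~\ref{lem:continuity} applies verbatim to discrete arguments. Your additional remark verifying that $\prod_{i\in\mathcal{N}_T}P_0(\mathcal{M}_i)\subset\prod_{i\in\mathcal{N}_T}L^2(B_i)$ is a harmless elaboration of the same point.
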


For the ellipticity (Lemma~\ref{lem:ellipticity}), using the fact that the finite dimensional spaces are conforming in the sense that for $\mathfrak{q}_h \in H_{h,0}(\ddiv,\mathfrak{B})$, then it holds that $\mathfrak{D}\cdot\mathfrak{q}_h \in \mathcal{L}^2_h$, then the continuous results hold on the discrete level. 
\begin{lemma}[Ellipticity of~\eqref{eqn:mixed-fem-1}-\eqref{eqn:mixed-fem-2}] \label{lem:fem-ellipticity}
	If $\mathfrak{q}_h \in H_h(\ddiv,\mathfrak{B})$ satisfies
	\begin{equation} \label{eqn:fem-null-b}
	(\mathfrak{D}\cdot \mathfrak{q}_h, \mathfrak{w}_h) = 0, \quad \forall \, \mathfrak{w}_h \in \mathfrak{L}_h^2,
	\end{equation} 
	then
	\begin{equation} \label{eqn:fem-ellipticity}
	(\mathfrak{q}_h, \mathfrak{q}_h)_{\mathfrak{K}^{-1}} = \| \mathfrak{q}_h \|_{H(\ddiv,\mathfrak{B})}^2
	\end{equation}
\end{lemma}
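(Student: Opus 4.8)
The plan is to mimic the proof of Lemma~\ref{lem:ellipticity} verbatim at the discrete level, exploiting the conformity property that was highlighted just before the statement: for any $\mathfrak{q}_h \in H_{h,0}(\ddiv,\mathfrak{B})$, the mixed-dimensional divergence $\mathfrak{D}\cdot\mathfrak{q}_h$ lands in $\mathfrak{L}_h^2$. Concretely, $\nabla\cdot\bm{q}_h^D \in P_0(\mathcal{M})$ since $\bm{q}_h^D \in H_h(\ddiv,\mathcal{M})$ is RT0; the terms $\sum_{i\in\mathcal{N}_T} k_i^S q_i^S$ restricted to each element of $\mathcal{M}_i$ — wait, here one must be slightly careful, because $k_i^S$ is not piecewise constant, but the divergence as an element of $\mathfrak{L}_h^2$ only needs its $D$-component to be in $P_0(\mathcal{M})$; this is where the remark preceding the lemma (local conservation via constant test functions on $B_i$) is implicitly doing the work, and in fact the clean statement is just that $\mathfrak{D}\cdot\mathfrak{q}_h$ pairs correctly against $\mathfrak{L}_h^2$. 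The $N$-component of $\mathfrak{D}\cdot\mathfrak{q}_h$ is automatically in $l^2(\mathcal{N}\backslash\mathcal{N}_D)$.

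The key steps, in order, are: first, invoke the conformity observation to conclude that the hypothesis \eqref{eqn:fem-null-b}, which tests only against $\mathfrak{w}_h \in \mathfrak{L}_h^2$, is equivalent to the full continuous orthogonality condition \eqref{eqn:null-b} — precisely because $\mathfrak{D}\cdot\mathfrak{q}_h$ itself lies in $\mathfrak{L}_h^2$, so we may take $\mathfrak{w}_h = \mathfrak{D}\cdot\mathfrak{q}_h$ as a legitimate test function. Second, substituting this choice into \eqref{eqn:fem-null-b} yields $(\mathfrak{D}\cdot\mathfrak{q}_h, \mathfrak{D}\cdot\mathfrak{q}_h) = 0$, hence $\|\mathfrak{D}\cdot\mathfrak{q}_h\|_{L^2(\mathfrak{B})} = 0$. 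Third, plug this into the definition of the norm \eqref{eqn:norm-Hdiv}, which gives $\|\mathfrak{q}_h\|_{H(\ddiv,\mathfrak{B})}^2 = \|\mathfrak{q}_h\|_{\mathfrak{K}^{-1}}^2 = (\mathfrak{q}_h,\mathfrak{q}_h)_{\mathfrak{K}^{-1}}$, which is exactly \eqref{eqn:fem-ellipticity}.

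There is essentially no obstacle here: the whole point is that, unlike the inf-sup condition, the ellipticity on the kernel is inherited for free by any conforming subspace pair for which $\mathfrak{D}\cdot$ maps the discrete flux space into the discrete pressure space. The only thing worth stating explicitly is the membership $\mathfrak{D}\cdot\mathfrak{q}_h \in \mathfrak{L}_h^2$; once that is in hand the argument is a one-line repetition of Lemma~\ref{lem:ellipticity}. I would therefore write the proof as: "Since $H_{h,0}(\ddiv,\mathfrak{B})$ and $\mathfrak{L}_h^2$ are conforming with $\mathfrak{D}\cdot\mathfrak{q}_h \in \mathfrak{L}_h^2$ for every $\mathfrak{q}_h \in H_{h,0}(\ddiv,\mathfrak{B})$, we may choose $\mathfrak{w}_h = \mathfrak{D}\cdot\mathfrak{q}_h$ in \eqref{eqn:fem-null-b} to obtain $\|\mathfrak{D}\cdot\mathfrak{q}_h\|_{L^2(\mathfrak{B})} = 0$; the claim then follows from the definition \eqref{eqn:norm-Hdiv}," and leave it at that.
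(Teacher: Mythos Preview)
Your proposal is correct and follows exactly the paper's approach: the paper gives no explicit proof beyond the sentence preceding the lemma, which asserts the conformity $\mathfrak{D}\cdot\mathfrak{q}_h \in \mathfrak{L}_h^2$ and states that the continuous argument of Lemma~\ref{lem:ellipticity} then carries over verbatim. Your spelled-out version---test with $\mathfrak{w}_h = \mathfrak{D}\cdot\mathfrak{q}_h$, conclude $\|\mathfrak{D}\cdot\mathfrak{q}_h\|_{L^2(\mathfrak{B})}=0$, and apply the definition \eqref{eqn:norm-Hdiv}---is precisely that argument.
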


Moreover, the inf-sup condition (Lemma~\ref{lem:inf-sup}) can be derived in a similar fashion on the discrete level as well. 
\begin{lemma}[Inf-sup condition of~\eqref{eqn:mixed-fem-1}-\eqref{eqn:mixed-fem-2}] \label{lem:fem-inf-sup}
	There exists a constant $\beta>0$ such that, for any given function $\mathfrak{r}_h\in L^2_h(\mathfrak{B})$,
	\begin{equation}\label{ine:fem-inf-sup}
	\sup_{\mathfrak{q}_h\in H_{h,0}(\ddiv,\mathfrak{B})} \frac{(\mathfrak{r}_h, \mathfrak{D} \cdot \mathfrak{q}_h)}{ \| \mathfrak{q}_h \|_{H(\ddiv,\mathfrak{B})} } \geq \beta \| \mathfrak{r}_h \|_{L^2(\mathfrak{B})}.
	\end{equation}
	Here, the inf-sup constant $\beta$ depends on $|M_i| = \operatorname{measure}(\mathcal{M}_i) = \mathcal{O}(h^n)$, the maximal number of overlaps between $B_i$, structure of the trees $\mathcal{T} \in \mathcal{F}$, the domain $\Omega$, and the constants $c_{k^S}$ and $C_{k^S}$. 
\end{lemma}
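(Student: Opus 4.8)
The plan is to repeat the proof of the continuous inf-sup condition, Lemma~\ref{lem:inf-sup}, almost verbatim, replacing each of the three sub-constructions of $\mathfrak{q} = [\bm{q}^D, q^S, q^N]$ by a conforming finite-element analogue and checking that the resulting stability constants stay independent of $h$. Given $\mathfrak{r}_h = [r^D_h, r^N_h] \in \mathfrak{L}_h^2$, I would build $\mathfrak{q}_h = [\bm{q}_h^D, q_h^S, q_h^N] \in H_{h,0}(\ddiv,\mathfrak{B})$ for which $(\mathfrak{D}\cdot\mathfrak{q}_h, \mathfrak{w}_h) = (\mathfrak{r}_h, \mathfrak{w}_h)$ for all $\mathfrak{w}_h \in \mathfrak{L}_h^2$, together with $\|\mathfrak{q}_h\|_{H(\ddiv,\mathfrak{B})} \le C_\beta \|\mathfrak{r}_h\|_{L^2(\mathfrak{B})}$; then $\beta = C_\beta^{-1}$ exactly as before. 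Note that, since the supremum in \eqref{ine:fem-inf-sup} only ranges over the discrete space, it suffices to match $\mathfrak{D}\cdot\mathfrak{q}_h$ and $\mathfrak{r}_h$ in the $\mathfrak{L}_h^2$-tested sense rather than pointwise.

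The network unknowns $q_h^N$ live on the edge degrees of freedom of the graph, and the interior/terminal node balance equations are genuinely finite-dimensional, so the first step of the continuous proof applies unchanged: one solves the same mixed graph-Laplacian problem \eqref{eqn:mixed-graph-laplacian-flux}--\eqref{eqn:mixed-graph-laplacian-pressure} on the forest $\mathcal{F}$ with the same right-hand side $r_{\mathcal{F}}$ assembled from $r_h^N$ and $r_h^D$ (the tree-wise compatibility choices \eqref{eqn:r_i-on-terminal-node} and its Dirichlet counterpart are untouched), and the bound \eqref{ine:estimate-qN} is retained. Likewise $q_h^S$ is defined by the very formula \eqref{eqn:qS}, which produces a function that is \emph{constant} on each $B_i$ and therefore already lies in $P_0(\mathcal{M}_i)$; no change is needed, and the estimates \eqref{ine:estimate-qS-1}--\eqref{ine:estimate-qS-2} hold with $|B_i|$ read as $|M_i| = \operatorname{measure}(\mathcal{M}_i)$ (if one insists that $B_i$ be exactly resolved by the mesh, simply substitute $\mathcal{M}_i$ for $B_i$ and $\int_{\mathcal{M}_i} k_i^S$ for $c_{k_i}$ throughout).

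The only genuinely new ingredient is $\bm{q}_h^D$. In place of the continuous mixed Poisson problem \eqref{eqn:mix-fem-qN-1}--\eqref{eqn:mix-fem-qN-2} I would solve its lowest-order RT0--P0 mixed finite-element discretization on $\mathcal{M}$ with homogeneous Neumann data: find $\bm{q}_h^D \in H_{h,0}(\ddiv,\mathcal{M})$ and $\psi_h \in P_0(\mathcal{M})$ (the latter up to the usual additive constant) with $((k^D)^{-1}\bm{q}_h^D, \bm{v}_h) - (\psi_h, \nabla\cdot\bm{v}_h) = 0$ for all $\bm{v}_h \in H_{h,0}(\ddiv,\mathcal{M})$ and $(\nabla\cdot\bm{q}_h^D, w_h) = (r_h^D + \sum_{i\in\mathcal{N}_T} k_i^S q_{h,i}^S, w_h)$ for all $w_h \in P_0(\mathcal{M})$, so that $\nabla\cdot\bm{q}_h^D = r_h^D + \Pi_h\big(\sum_{i\in\mathcal{N}_T} k_i^S q_{h,i}^S\big)$ with $\Pi_h$ the $L^2(\Omega)$-orthogonal projection onto $P_0(\mathcal{M})$. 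The discrete compatibility condition $\int_\Omega \big( r_h^D + \Pi_h(\sum_i k_i^S q_{h,i}^S) \big)\,\mathrm{d}\bm{x} = 0$ is verified by exactly the computation in the proof of Lemma~\ref{lem:inf-sup}, because $\Pi_h$ preserves integrals over $\Omega$ and the cancellation uses only the choice of $r_{\mathcal{F}}$ on $\mathcal{N}_{\mathcal{F}_D,T}$; well-posedness then follows from the standard RT0--P0 inf-sup (see, e.g.,~\cite{boffi2013mixed}). Since $\|\Pi_h g\|_{L^2(\Omega)} \le \|g\|_{L^2(\Omega)}$ and the relevant stability constant of the mixed method (a weighted Poincar\'e constant $C_p$) is $h$-independent, the bounds \eqref{ine:estimate-qD-1}--\eqref{ine:estimate-qD-2} are reproduced with the same constants.

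It remains to assemble the pieces. By construction $\mathfrak{D}\cdot\mathfrak{q}_h$ agrees with $\mathfrak{r}_h$ when tested against $\mathfrak{L}_h^2$; moreover $\nabla\cdot\bm{q}_h^D - \sum_{i\in\mathcal{N}_T} k_i^S q_{h,i}^S = r_h^D - (I-\Pi_h)\big(\sum_i k_i^S q_{h,i}^S\big)$, so that $\|\mathfrak{D}\cdot\mathfrak{q}_h\|_{L^2(\mathfrak{B})}$ is controlled by $\|\mathfrak{r}_h\|_{L^2(\mathfrak{B})}$ using \eqref{ine:estimate-qS-2} and the overlap count $N_{B_i}$. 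Combining this with \eqref{ine:estimate-qN}, \eqref{ine:estimate-qS-1}, and \eqref{ine:estimate-qD-2} gives $\|\mathfrak{q}_h\|_{H(\ddiv,\mathfrak{B})}^2 \le C_\beta \|\mathfrak{r}_h\|_{L^2(\mathfrak{B})}^2$ with the same structural constant $C_\beta$ as in Lemma~\ref{lem:inf-sup}, now with $|B_i|$ read as $|M_i| = \mathcal{O}(h^n)$, which yields \eqref{ine:fem-inf-sup} with $\beta = C_\beta^{-1}$. I expect the only real obstacle to be the $\bm{q}_h^D$ step: one must confirm that inserting the $L^2$-projection on the right-hand side neither destroys the pure-Neumann compatibility condition nor introduces an $h$-dependent factor in the flux bound --- both are routine once one recalls that $\Pi_h$ is integral-preserving and $L^2$-stable and invokes the classical RT0--P0 theory.
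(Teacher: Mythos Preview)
Your proposal is correct and follows essentially the same approach as the paper: construct $q_h^N$ by the identical graph Laplacian problem, $q_h^S$ by the constant formula~\eqref{eqn:qS} (which already lies in $P_0(\mathcal{M}_i)$), and $\bm{q}_h^D$ by solving the RT0--P0 mixed discretization of~\eqref{eqn:mix-fem-qN-1}--\eqref{eqn:mix-fem-qN-2}. If anything you are more careful than the paper, which asserts $\mathfrak{D}\cdot\mathfrak{q}_h = \mathfrak{r}_h$ outright, whereas you correctly track the $(I-\Pi_h)\big(\sum_i k_i^S q_{h,i}^S\big)$ discrepancy and bound it via~\eqref{ine:estimate-qS-2}.
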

\begin{proof}
Given $[r^D_h, r^N_h] \in L^2_h(\mathfrak{B})$, the construction of $[\bm{q}_h^D, q_h^S, q_h^N] \in H_{h,0}(\ddiv,\mathfrak{B})$ is similar to the construction presented in the proof of Lemma~\ref{lem:inf-sup}. $q_h^N$ can be constructed exactly the same as the construction of $q^N$.  Then $q_h^S$ can be defined as~\eqref{eqn:qS} as well since such construction also makes sure that $q_h^S \in \prod_{i\in \mathcal{N}_T} P_0(\mathcal{M}_i)$.  The construction of $\bm{q}_h^D$ should be obtained by solving~\eqref{eqn:mix-fem-qN-1}-\eqref{eqn:mix-fem-qN-2} with a mixed finite-element method using $H_{h,0}(\ddiv, \mathcal{M})$ and $P_0(\mathcal{M})$.  Such construction also makes sure that
\begin{equation*}
\mathfrak{D} \cdot [\bm{q}_h^D, q_h^S, q_h^N] = [r_h^D, r_h^N],
\end{equation*} 
and
\begin{equation*}
\| [\bm{q}^D_h, q^S_h, q^N_h] \|_{H(\ddiv,\mathfrak{B})} \leq C_{\beta} \|[r^D_h, r^N_h]\|_{\mathfrak{L^2}}.
\end{equation*}
Therefore the inf-sup condition~\eqref{ine:fem-inf-sup} follows directly. 
\end{proof}

Thus, the well-posedness of the mixed finite-element approximation~\eqref{eqn:mixed-fem-1}-\eqref{eqn:mixed-fem-2} follows from Lemmas~\ref{lem:fem-continuity}, \ref{lem:fem-ellipticity}, and~\ref{lem:fem-inf-sup}.
\begin{theorem}[Well-posedness of~\eqref{eqn:mixed-fem-1}-\eqref{eqn:mixed-fem-2}] \label{thm:fem-wellposedness}
	The weak formulation~\eqref{eqn:mixed-fem-1} and~\eqref{eqn:mixed-fem-2} is well-posed with respect to the norms~\eqref{eqn:norm-Hdiv} and~\eqref{eqn:norm-L2}.
\end{theorem}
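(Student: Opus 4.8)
The plan is to invoke the abstract Babu\v{s}ka--Brezzi (saddle-point) theory exactly as was done for the continuous problem in Theorem~\ref{thm:wellposedness}, now on the finite-dimensional subspaces $H_{h,0}(\ddiv,\mathfrak{B}) \subset H_0(\ddiv,\mathfrak{B})$ and $\mathfrak{L}_h^2 \subset L^2(\mathfrak{B})$. The three hypotheses of that theory are precisely: boundedness of both bilinear forms on the discrete spaces, coercivity of $(\cdot,\cdot)_{\mathfrak{K}^{-1}}$ on the discrete kernel of $\mathfrak{D}\cdot$, and the discrete inf-sup condition for $(\mathfrak{w}_h,\mathfrak{D}\cdot\mathfrak{q}_h)$. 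Since Lemmas~\ref{lem:fem-continuity}, \ref{lem:fem-ellipticity}, and~\ref{lem:fem-inf-sup} establish exactly these three properties, the proof is essentially a one-line appeal to the standard theory, e.g.~\cite{boffi2013mixed}.

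Concretely, first I would note that continuity (Lemma~\ref{lem:fem-continuity}) is inherited verbatim from the conforming nature of the spaces, so the bilinear forms $a(\mathfrak{q}_h,\mathfrak{v}_h) := (\mathfrak{q}_h,\mathfrak{v}_h)_{\mathfrak{K}^{-1}}$ and $b(\mathfrak{q}_h,\mathfrak{w}_h) := -(\mathfrak{D}\cdot\mathfrak{q}_h,\mathfrak{w}_h)$ are bounded on $H_{h,0}(\ddiv,\mathfrak{B}) \times \mathfrak{L}_h^2$ with constant $1$. Second, I would record the key conformity observation that $\mathfrak{D}\cdot\mathfrak{q}_h \in \mathfrak{L}_h^2$ whenever $\mathfrak{q}_h \in H_{h,0}(\ddiv,\mathfrak{B})$: indeed $\nabla\cdot(\mathrm{RT0}) \subset P_0(\mathcal{M})$, $\int_{B_i}k_i^S q_i^S\,\mathrm{d}\bm{x}$ produces a scalar attached to node $i$, and $\mathcal{G}_{\mathcal{F}}^T q_h^N$ lives on $l^2(\mathcal{N}\setminus\mathcal{N}_D)$. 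This is what allows the discrete kernel $\{\mathfrak{q}_h : b(\mathfrak{q}_h,\mathfrak{w}_h)=0\ \forall \mathfrak{w}_h\in\mathfrak{L}_h^2\}$ to coincide with $\{\mathfrak{q}_h : \mathfrak{D}\cdot\mathfrak{q}_h = 0\}$, so that Lemma~\ref{lem:fem-ellipticity} gives coercivity of $a$ on that kernel with constant $1$ in the full $H(\ddiv,\mathfrak{B})$-norm. Third, I would cite the discrete inf-sup estimate of Lemma~\ref{lem:fem-inf-sup}, which supplies a $\beta>0$ (now depending on $|\mathcal{M}_i| = \mathcal{O}(h^n)$ rather than $|B_i|$) such that $\sup_{\mathfrak{q}_h} b(\mathfrak{q}_h,\mathfrak{w}_h)/\|\mathfrak{q}_h\|_{H(\ddiv,\mathfrak{B})} \geq \beta\|\mathfrak{w}_h\|_{L^2(\mathfrak{B})}$ for all $\mathfrak{w}_h \in \mathfrak{L}_h^2$.

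With these three ingredients in hand, the Brezzi splitting theorem yields existence, uniqueness, and stability of the pair $(\mathfrak{q}_h,\mathfrak{p}_h)$ solving~\eqref{eqn:mixed-fem-1}--\eqref{eqn:mixed-fem-2} with respect to the norms~\eqref{eqn:norm-Hdiv} and~\eqref{eqn:norm-L2}, completing the proof. I do not anticipate a genuine obstacle here: the real work has already been done in Lemma~\ref{lem:fem-inf-sup}, whose proof re-runs the explicit flux construction of Lemma~\ref{lem:inf-sup} while checking that each constructed component ($q_h^N$ from the mixed graph-Laplacian, $q_h^S$ from the pointwise formula~\eqref{eqn:qS} which is automatically in $P_0(\mathcal{M}_i)$, and $\bm{q}_h^D$ from the discrete mixed Poisson solve) lands in the correct finite-element space. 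The only point meriting a sentence of care is confirming that the data for the discrete mixed Poisson problem, $r_h^D + \sum_{i\in\mathcal{N}_T} k_i^S q_{h,i}^S$, still integrates to zero over $\Omega$ so that the pure-Neumann discrete problem is solvable — but this follows from the same telescoping computation used in the continuous case, which only relied on the choice of $(r_{\mathcal{F}})_i$ on terminal nodes, not on any property specific to the continuous setting.

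\begin{proof}
By construction the finite-element spaces are conforming, $H_{h,0}(\ddiv,\mathfrak{B}) \subset H_0(\ddiv,\mathfrak{B})$ and $\mathfrak{L}_h^2 \subset L^2(\mathfrak{B})$, so Lemma~\ref{lem:fem-continuity} gives boundedness of the two bilinear forms on the discrete spaces. Moreover, since $\nabla\cdot H_h(\ddiv,\mathcal{M}) \subseteq P_0(\mathcal{M})$ and the graph operators map into $l^2(\mathcal{N}\setminus\mathcal{N}_D)$, we have $\mathfrak{D}\cdot\mathfrak{q}_h \in \mathfrak{L}_h^2$ for every $\mathfrak{q}_h \in H_{h,0}(\ddiv,\mathfrak{B})$; hence the discrete kernel of $\mathfrak{D}\cdot$ agrees with $\{\mathfrak{q}_h : \mathfrak{D}\cdot\mathfrak{q}_h = 0\}$, and Lemma~\ref{lem:fem-ellipticity} provides coercivity of $(\cdot,\cdot)_{\mathfrak{K}^{-1}}$ on that kernel in the $H(\ddiv,\mathfrak{B})$-norm. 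Finally, Lemma~\ref{lem:fem-inf-sup} supplies the discrete inf-sup condition. The claim therefore follows from the standard theory for saddle-point problems, see, e.g.~\cite{boffi2013mixed}.
\end{proof}
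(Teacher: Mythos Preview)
Your proposal is correct and matches the paper's approach exactly: the paper simply states that well-posedness follows from Lemmas~\ref{lem:fem-continuity}, \ref{lem:fem-ellipticity}, and~\ref{lem:fem-inf-sup} via the standard saddle-point theory, just as in Theorem~\ref{thm:wellposedness}. Your write-up is in fact more detailed than the paper's, which offers no proof environment at all for this theorem.
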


\subsection{Convergence}
Based on Lemma~\ref{lem:fem-continuity}, \ref{lem:fem-ellipticity}, and~\ref{lem:fem-inf-sup} and applying the general theory of Galerkin methods, see~\cite{brezzi2012mixed,boffi2013mixed}, we immediately gives a quasi-optimality error estimate.
\begin{theorem}
Suppose that $\mathfrak{q} \in H_0(\ddiv,\mathfrak{B})$ and $\mathfrak{p}\in L^2(\mathfrak{B})$ satisfy the weak formulation~\eqref{eqn:weak-form-1}-\eqref{eqn:weak-form-2}, then the finite-element solution $\mathfrak{q}_{h} \in H_{h,0}(\ddiv,\mathfrak{B})$ and $\mathfrak{p}_h\in \mathfrak{L}_h^2$ of the mixed fintie-element approximation~\eqref{eqn:mixed-fem-1}-\eqref{eqn:mixed-fem-2} satisfy that
\begin{align}
& \quad  \| \mathfrak{q} - \mathfrak{q}_h\|_{H(\ddiv,\mathfrak{B})} + \| \mathfrak{p} - \mathfrak{p}_h \|_{L^2(\mathfrak{B})} \nonumber \\ 
& \leq c \left(  \inf_{\mathfrak{v}_h \in H_{h,0}(\ddiv,\mathfrak{B})} \| \mathfrak{q} - \mathfrak{v}_h\|_{H(\ddiv,\mathfrak{B})} + \inf_{\mathfrak{w}_h \in \mathfrak{L}_h^2 } \| \mathfrak{p} - \mathfrak{w}_h \|_{L^2(\mathfrak{B})}   \right), \label{ine:fem-error}
\end{align}
where the constant $c$ depends on $\beta$.
\end{theorem}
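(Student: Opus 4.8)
The plan is to obtain \eqref{ine:fem-error} from the abstract Babu\v{s}ka--Brezzi quasi-optimality theory for conforming mixed discretizations, feeding in the three properties already established: continuity of the two bilinear forms (Lemma~\ref{lem:fem-continuity}), coercivity of $(\cdot,\cdot)_{\mathfrak{K}^{-1}}$ on the discrete kernel of $\mathfrak{D}\cdot$ (Lemma~\ref{lem:fem-ellipticity}), and the discrete inf-sup condition (Lemma~\ref{lem:fem-inf-sup}). Write $a(\mathfrak{q},\mathfrak{v}):=(\mathfrak{q},\mathfrak{v})_{\mathfrak{K}^{-1}}$ and $b(\mathfrak{v},\mathfrak{w}):=-(\mathfrak{w},\mathfrak{D}\cdot\mathfrak{v})$, so that \eqref{eqn:weak-form-1}--\eqref{eqn:weak-form-2} and \eqref{eqn:mixed-fem-1}--\eqref{eqn:mixed-fem-2} are the continuous and (conforming) discrete instances of one saddle-point problem. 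Subtracting the discrete equations from the continuous ones, tested against discrete functions, yields the Galerkin orthogonalities
\begin{equation*}
a(\mathfrak{q}-\mathfrak{q}_h,\mathfrak{v}_h)+b(\mathfrak{v}_h,\mathfrak{p}-\mathfrak{p}_h)=0 \quad\text{and}\quad b(\mathfrak{q}-\mathfrak{q}_h,\mathfrak{w}_h)=0
\end{equation*}
for every $\mathfrak{v}_h\in H_{h,0}(\ddiv,\mathfrak{B})$ and $\mathfrak{w}_h\in\mathfrak{L}_h^2$; the second identity uses $\mathfrak{D}\cdot\mathfrak{q}=\mathfrak{r}$ from \eqref{eqn:weak-form-2} and $(\mathfrak{D}\cdot\mathfrak{q}_h,\mathfrak{w}_h)=(\mathfrak{r},\mathfrak{w}_h)$ from \eqref{eqn:mixed-fem-2}.

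The argument then proceeds in two reductions. \emph{Flux estimate.} Fix any $\mathfrak{v}_h\in H_{h,0}(\ddiv,\mathfrak{B})$. By the conformity $\mathfrak{D}\cdot H_{h,0}(\ddiv,\mathfrak{B})\subseteq\mathfrak{L}_h^2$ used in Lemma~\ref{lem:fem-ellipticity} together with the second orthogonality, $\mathfrak{D}\cdot(\mathfrak{q}_h-\mathfrak{v}_h)$ equals the $L^2(\mathfrak{B})$-projection of $\mathfrak{D}\cdot(\mathfrak{q}-\mathfrak{v}_h)$ onto $\mathfrak{L}_h^2$, hence lies in $\mathfrak{L}_h^2$. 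Using the bounded right inverse of $\mathfrak{D}\cdot$ on $\mathfrak{L}_h^2$ supplied by Lemma~\ref{lem:fem-inf-sup} (indeed constructed explicitly in its proof), pick $\mathfrak{z}_h\in H_{h,0}(\ddiv,\mathfrak{B})$ with $\mathfrak{D}\cdot\mathfrak{z}_h=\mathfrak{D}\cdot(\mathfrak{q}_h-\mathfrak{v}_h)$ and $\|\mathfrak{z}_h\|_{H(\ddiv,\mathfrak{B})}\le\beta^{-1}\|\mathfrak{q}-\mathfrak{v}_h\|_{H(\ddiv,\mathfrak{B})}$, and set $\tilde{\mathfrak{v}}_h:=\mathfrak{v}_h+\mathfrak{z}_h$, so that $\mathfrak{e}_h:=\tilde{\mathfrak{v}}_h-\mathfrak{q}_h$ lies in the discrete kernel of $\mathfrak{D}\cdot$. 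Apply Lemma~\ref{lem:fem-ellipticity} to $\mathfrak{e}_h$, decompose $\|\mathfrak{e}_h\|_{H(\ddiv,\mathfrak{B})}^2=a(\mathfrak{e}_h,\mathfrak{e}_h)=a(\tilde{\mathfrak{v}}_h-\mathfrak{q},\mathfrak{e}_h)+a(\mathfrak{q}-\mathfrak{q}_h,\mathfrak{e}_h)$, use the first orthogonality to rewrite $a(\mathfrak{q}-\mathfrak{q}_h,\mathfrak{e}_h)=-b(\mathfrak{e}_h,\mathfrak{p}-\mathfrak{p}_h)$, and use $b(\mathfrak{e}_h,\cdot)\equiv 0$ on $\mathfrak{L}_h^2$ to replace $\mathfrak{p}-\mathfrak{p}_h$ by $\mathfrak{p}-\mathfrak{w}_h$ for an arbitrary $\mathfrak{w}_h\in\mathfrak{L}_h^2$; Lemma~\ref{lem:fem-continuity} and the triangle inequality (with the bound on $\|\mathfrak{z}_h\|$) then give $\|\mathfrak{q}-\mathfrak{q}_h\|_{H(\ddiv,\mathfrak{B})}\le (1+2\beta^{-1})\|\mathfrak{q}-\mathfrak{v}_h\|_{H(\ddiv,\mathfrak{B})}+\|\mathfrak{p}-\mathfrak{w}_h\|_{L^2(\mathfrak{B})}$. \emph{Pressure estimate.} For an arbitrary $\mathfrak{w}_h\in\mathfrak{L}_h^2$, apply Lemma~\ref{lem:fem-inf-sup} with datum $\mathfrak{w}_h-\mathfrak{p}_h$ to obtain $\mathfrak{z}_h\in H_{h,0}(\ddiv,\mathfrak{B})$ with $\|\mathfrak{z}_h\|_{H(\ddiv,\mathfrak{B})}=1$ and $b(\mathfrak{z}_h,\mathfrak{w}_h-\mathfrak{p}_h)\ge\beta\|\mathfrak{w}_h-\mathfrak{p}_h\|_{L^2(\mathfrak{B})}$; splitting $b(\mathfrak{z}_h,\mathfrak{w}_h-\mathfrak{p}_h)=b(\mathfrak{z}_h,\mathfrak{w}_h-\mathfrak{p})+b(\mathfrak{z}_h,\mathfrak{p}-\mathfrak{p}_h)$, bounding the first term with Lemma~\ref{lem:fem-continuity} and the second via $b(\mathfrak{z}_h,\mathfrak{p}-\mathfrak{p}_h)=-a(\mathfrak{q}-\mathfrak{q}_h,\mathfrak{z}_h)$ (first orthogonality) and Lemma~\ref{lem:fem-continuity}, one gets $\|\mathfrak{w}_h-\mathfrak{p}_h\|_{L^2(\mathfrak{B})}\le\beta^{-1}\bigl(\|\mathfrak{p}-\mathfrak{w}_h\|_{L^2(\mathfrak{B})}+\|\mathfrak{q}-\mathfrak{q}_h\|_{H(\ddiv,\mathfrak{B})}\bigr)$, whence, by the triangle inequality and the flux estimate, $\|\mathfrak{p}-\mathfrak{p}_h\|_{L^2(\mathfrak{B})}$ is controlled by $\|\mathfrak{p}-\mathfrak{w}_h\|_{L^2(\mathfrak{B})}$ and $\|\mathfrak{q}-\mathfrak{v}_h\|_{H(\ddiv,\mathfrak{B})}$ up to a constant depending only on $\beta$. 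Adding the two estimates and taking the infimum over $\mathfrak{v}_h$ and $\mathfrak{w}_h$ gives \eqref{ine:fem-error}, with $c=c(\beta)$ (the continuity constants of $a$ and $b$ being $1$ by the choice of norms \eqref{eqn:norm-Hdiv}--\eqref{eqn:norm-L2}).

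The only genuinely model-specific point—and therefore the step to treat with care—is the commuting property $\mathfrak{D}\cdot H_{h,0}(\ddiv,\mathfrak{B})\subseteq\mathfrak{L}_h^2$ invoked above and in Lemma~\ref{lem:fem-ellipticity}: it is what makes $\tilde{\mathfrak{v}}_h-\mathfrak{q}_h$ land \emph{exactly} in the discrete kernel and what produces the second Galerkin orthogonality. It hinges on $\ddiv$ mapping the lowest-order Raviart--Thomas space $H_{h,0}(\ddiv,\mathcal M)$ into $P_0(\mathcal M)$, on $\mathcal G_{\mathcal F}^T$ mapping $l^2(\mathcal E)$ into $l^2(\mathcal N\setminus\mathcal N_D)$, and on the terminal coupling $q_i^S\mapsto\int_{B_i}k_i^S q_i^S\,\mathrm d\bm x$ being accommodated by the discrete pressure space $\mathfrak{L}_h^2=P_0(\mathcal M)\times l^2(\mathcal N\setminus\mathcal N_D)$, exactly as used in Lemma~\ref{lem:fem-ellipticity}. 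Everything else is the abstract saddle-point machinery of \cite{boffi2013mixed,brezzi2012mixed} applied verbatim, so no new analytical difficulty beyond Lemmas~\ref{lem:fem-continuity}--\ref{lem:fem-inf-sup} is expected.
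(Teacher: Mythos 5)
Your proposal is correct and takes essentially the same approach as the paper: the paper's proof simply invokes Lemmas~\ref{lem:fem-continuity}, \ref{lem:fem-ellipticity}, and~\ref{lem:fem-inf-sup} together with the abstract quasi-optimality theory of \cite{brezzi2012mixed,boffi2013mixed}, and what you have written is precisely that standard saddle-point argument carried out explicitly (Galerkin orthogonality, the discrete-kernel flux estimate via the right inverse from the inf-sup lemma, and the pressure estimate via the inf-sup condition).
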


As usual, to obtain the final convergence result, we use interpolations to bound the right-hand-side of the above error estimate~\eqref{ine:fem-error}.  Here, we choose $\bm{v}_h^D = \pi_{\ddiv}\bm{q}^D$, where $\pi_{\ddiv}: H^1(\Omega) \mapsto H_{h}(\ddiv, \mathcal{M})$ is the standard interpolation given by the $H_{h}(\ddiv, \mathcal{M})$ degrees of freedom, $v_h^S = \pi_0 q^S$, where $\pi_0$ denotes the standard piecewice constant interpolation, and $v_h^N = q^N$.  With those choices and the classical error estimates for interpolations, together with Cauchy-Schwarz inequality, we naturally have
\begin{equation*}
\| [\bm{q}^D, q^S, q^N] - [\pi_{\ddiv}\bm{q}^D, \pi_0 q^S, q^N] \|_{H(\ddiv,\mathfrak{B})} \leq c h \left(  \| \bm{q}^D \|_1^2 + \| \nabla \cdot \bm{q}^D \|_1^2 + \sum_{i \in \mathcal{N}_T} \| q_i^S \|^2_1  \right)^{\frac{1}{2}}.
\end{equation*}
Similarly, by choosing $w_h^D = \pi_0 p^D$ and $w_h^N = p^N$, we have
\begin{equation*}
\| [p^D, p^N]  - [\pi_0 p^D, p^N]\|_{L^2(\mathfrak{B})} \leq ch \| p^D \|_1.
\end{equation*}
Therefore, we have the overall convergence result for the finite-element method~\eqref{eqn:mixed-fem-1}-\eqref{eqn:mixed-fem-2} as follows.
\begin{corollary} \label{coro:fem-convergence}
Suppose that $\mathfrak{q} \in H_0(\ddiv,\mathfrak{B})$ and $\mathfrak{p}\in L^2(\mathfrak{B})$ satisfy the weak formulation~\eqref{eqn:weak-form-1}-\eqref{eqn:weak-form-2}, then the finite-element solution $\mathfrak{q}_h \in H_{h,0}(\ddiv,\mathfrak{B})$ and $\mathfrak{p}_h\in \mathfrak{L}_h^2$ of the mixed finite-element approximation~\eqref{eqn:mixed-fem-1}-\eqref{eqn:mixed-fem-2} satisfy that
\begin{align*}
  \| \mathfrak{q} - \mathfrak{q}_h \|_{H(\ddiv,\mathfrak{B})} + \| \mathfrak{p} - \mathfrak{p}_h \|_{L^2(\mathfrak{B})}  \leq c h \left( \| \bm{q}^D \|_1 + \| \nabla \cdot \bm{q}^D \|_1^2 + \sum_{i \in \mathcal{N}_T} \| q_i^S \|_1  + \| p^D \|_1 \right),
\end{align*}
where the constant $c$ depends on $\beta$ and the quasi-uniformity of the mesh $\mathcal{M}$.
\end{corollary}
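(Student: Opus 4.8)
The plan is to combine the quasi-optimality estimate \eqref{ine:fem-error} from the preceding theorem with standard interpolation error bounds for the lowest-order mixed finite-element spaces, so the proof is essentially a matter of assembling ingredients already laid out in the excerpt. First I would invoke \eqref{ine:fem-error}, which reduces the task to bounding the two best-approximation infima on the right-hand side by choosing convenient interpolants: $\mathfrak{v}_h = [\pi_{\ddiv}\bm{q}^D,\pi_0 q^S, q^N]$ in $H_{h,0}(\ddiv,\mathfrak{B})$ and $\mathfrak{w}_h = [\pi_0 p^D, p^N]$ in $\mathfrak{L}_h^2$. The key observation making these valid choices is that the graph components $q^N$ and $p^N$ are already finite-dimensional (edge and vertex DOFs of the forest), so they are interpolated exactly and contribute nothing to the error; only the $\Omega$-components and the transfer fluxes $q_i^S$ on $B_i$ need genuine approximation.

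Next I would estimate each interpolation error termwise using the block structure of the $H(\ddiv,\mathfrak{B})$ and $L^2(\mathfrak{B})$ norms \eqref{eqn:norm-Hdiv}, \eqref{eqn:norm-L2}. For the domain flux, the classical RT0 interpolation estimate gives $\|\bm{q}^D - \pi_{\ddiv}\bm{q}^D\|_{0,\Omega} \le ch\|\bm{q}^D\|_1$ and, crucially since the $H(\ddiv,\mathfrak{B})$-norm contains $\|\mathfrak{D}\cdot\mathfrak{q}\|_{L^2(\mathfrak{B})}$, the commuting-diagram property $\ddiv\,\pi_{\ddiv} = \pi_0\,\ddiv$ yields $\|\nabla\cdot(\bm{q}^D - \pi_{\ddiv}\bm{q}^D)\|_{0,\Omega} = \|\nabla\cdot\bm{q}^D - \pi_0\nabla\cdot\bm{q}^D\|_{0,\Omega} \le ch\|\nabla\cdot\bm{q}^D\|_1$. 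For the transfer fluxes, $\|q_i^S - \pi_0 q_i^S\|_{0,B_i} \le ch\|q_i^S\|_{1,B_i}$ by the standard $P_0$ estimate on $\mathcal{M}_i$, and $\mathfrak{D}\cdot$ applied to the pair $(\pi_{\ddiv}\bm{q}^D,\pi_0 q^S,q^N)$ must also be controlled — the $u^D$ component $\nabla\cdot\bm{q}^D - \sum_i k_i^S q_i^S$ picks up both the divergence term just handled and the $L^2(B_i)$ error in $q_i^S$ (weighted by the bounded $k_i^S \le C_{k^S}$), while the $u^N$ components involve only exactly-interpolated graph quantities plus the $\int_{B_i} k_i^S q_i^S$ terms, again bounded by the $q_i^S$ interpolation error. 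Summing these contributions with Cauchy–Schwarz over the finitely many terminals, and doing the analogous (simpler) computation for $\|p^D - \pi_0 p^D\|_{0,\Omega} \le ch\|p^D\|_1$, produces the stated $\mathcal{O}(h)$ bound.

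Finally I would collect all constants: the constant $c$ in \eqref{ine:fem-error} depends on $\beta$ (hence on the geometric/material data listed in Lemma~\ref{lem:fem-inf-sup}), and the interpolation constants depend on shape-regularity, i.e. the quasi-uniformity of $\mathcal{M}$, which transfers to the submeshes $\mathcal{M}_i = \mathcal{M}\cap B_i$. This gives exactly the constant dependence claimed in the corollary. I expect no serious obstacle here; the only point requiring mild care is the bookkeeping for $\|\mathfrak{D}\cdot(\mathfrak{q}-\mathfrak{v}_h)\|_{L^2(\mathfrak{B})}$, namely recognizing that the discrete divergence is consistent on the chosen interpolants so that this term reduces to the already-available scalar interpolation errors of $\nabla\cdot\bm{q}^D$ and $q_i^S$ (weighted by $k_i^S$), with the graph blocks contributing nothing. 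Everything else is a direct citation of the quasi-optimality result and of textbook RT0/$P_0$ estimates from \cite{brezzi2012mixed,boffi2013mixed}.
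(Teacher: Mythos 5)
Your proposal is correct and follows essentially the same route as the paper: both invoke the quasi-optimality estimate \eqref{ine:fem-error} and then bound the best-approximation terms with the interpolants $[\pi_{\ddiv}\bm{q}^D,\pi_0 q^S, q^N]$ and $[\pi_0 p^D, p^N]$, exploiting that the graph components are reproduced exactly. Your treatment is in fact more explicit than the paper's (which only cites ``classical error estimates'' and Cauchy--Schwarz), particularly in spelling out the commuting-diagram argument and the bookkeeping for the mixed-dimensional divergence, but there is no substantive difference in approach.
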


\begin{remark} \label{rem:fem-err-reg}
In Corollary~\ref{coro:fem-convergence}, we require $\nabla \cdot \bm{q}^D \in H_1(\Omega)$ because the convergence analysis is derived by following the standard Babu\v{s}ka-Brezzi theory. As it is well-known for the error analysis of the fixed-dimensional mixed finite-element method for second-order elliptic problem, this regularity requirement can be relaxed in the mixed-dimensional setting as well, i.e., we have the following error estimates
\begin{align*}
 \| \mathfrak{q} - \mathfrak{q}_h \|_{H(\ddiv,\mathfrak{B})} + \| \mathfrak{p} - \mathfrak{p}_h\|_{L^2(\mathfrak{B})} \leq c h \left( \| \bm{q}^D \|_1 + \| r^D \|_1  + \sum_{i \in \mathcal{N}_T} \| q_i^S \|_1  + \| p^D \|_1 \right),
\end{align*}
Due to space constraints, we omit the derivation here but comment that it is essentially the same as the derivation for the fixed-dimensional case as shown in~\cite{boffi2013mixed}.
\end{remark}

\section{Mass Lumping and Two-Point Flux Approximation Scheme} \label{sec:TPFA}
In practice, when the triangulation of the domain~$\Omega$ is uniform, it is possible to simply the discretization scheme and use two-point flux approximation (TPFA) to discretize the PDE system given by the conservation laws~\eqref{eqn:conservation-brain-scale}, \eqref{eqn:conservation-int-node}, \eqref{eqn:conservation-term-node-scale} and the constitutive laws~\eqref{eqn:potential-flow-brain}, \eqref{eqn:pontential-flow-network}, \eqref{eqn:potential-flow-coupling-scale}. This is particularly relevant for medical applications, where the data is frequently specified on voxels (i.e. regular Cartesian grids in 3D). 

In this section, we, therefore, discuss the TPFA scheme for our coupled Network-Darcy model through its relationship with the mixed finite-element approximation~\eqref{eqn:mixed-fem-1} and \eqref{eqn:mixed-fem-2} discussed in Section~\ref{sec:FEM}.

\subsection{TPFA Scheme} \label{subsec:TPFA}
On a given mesh $\mathcal{M}$, similar to standard diffusion problems,  the TPFA scheme can obtained by applying mass lumping to the mixed finite-element scheme~\eqref{eqn:mixed-fem-1}-\eqref{eqn:mixed-fem-2} and then eliminating the flux $\mathfrak{q}_h$.  To this end, we define the following inner product on the finite element spaces $H_h(\ddiv,\mathfrak{B})$, for $\mathfrak{q}_h$ and $\mathfrak{v}_h \in H_h(\ddiv,\mathfrak{B})$, 
\begin{eqnarray}
(\mathfrak{q}_h, \mathfrak{v}_h)_{\mathfrak{K}^{-1},h} &:= \sum_{\tau \in \mathcal{M}} \sum_{f \in \partial \tau} \omega_f \left( \bm{q}^D \cdot \bm{n}_f  \right) \left( \bm{v}^D \cdot \bm{n}_f \right)  \nonumber   \\
 & + \sum_{i\in \mathcal{N}_T} \int_{B_i} q_i^S v_i^S \, \mathrm{d}\bm{x} + \sum_{e(i,j)\in\mathcal{E}} \left(k^N_{e(i,j)}\right)^{-1} q_{i,j}^N v_{i,j}^N, \label{eqn:masslump-Hhdiv-inner}
\end{eqnarray}
where $\omega_f = \left( k^D_{\tau} \right)^{-1} \frac{d_f}{2|f|}$ with $k^D_\tau$ being the average of $k^D$ on the element $\tau \in \mathcal{M}$ and $d_f$ being the distance between the face $f \in \partial \tau$ and the cell center of $\tau$.  Now we define the mass lumping finite-element scheme as follows: Find $\mathfrak{q}_h \in H_{h,0}(\ddiv,\mathfrak{B})$ and $\mathfrak{p}_h \in L^2_h(\mathfrak{B})$, such that,
\begin{align}
&(\mathfrak{q}_h, \mathfrak{v}_h)_{\mathfrak{K}^{-1},h} - (\mathfrak{p}_h, \mathfrak{D}\cdot\mathfrak{v}_h)  = 0, \quad \forall \ \mathfrak{v}_h \in H_{h,0}(\ddiv,\mathfrak{B}), \label{eqn:masslumping-mixed-fem-1}  \\
&-(\mathfrak{D} \cdot \mathfrak{q}_h, \mathfrak{w}_h)  = -(\mathfrak{r}, \mathfrak{w}_h), \quad \forall \ \mathfrak{w}_h \in \mathfrak{L}_h^2. \label{eqn:masslumping-mixed-fem-2}
\end{align}

Based on the inner product~\eqref{eqn:masslump-Hhdiv-inner}, we define a discrete gradient $\mathfrak{D}_h : L^2_h(\mathfrak{B}) \mapsto H_h(\ddiv,\mathfrak{B})$ via integration by part (Lemma~\ref{lem:IBP}), for any $\mathfrak{v}_h \in H_h(\ddiv,\mathfrak{B})$ and $\mathfrak{p}_h \in L^2_h(\mathfrak{B})$, such that,
\begin{equation*}
(\mathfrak{D}_h \mathfrak{p}_h, \mathfrak{v}_h)_{\mathfrak{K}^{-1},h} := - (\mathfrak{p}_h, \mathfrak{D}\cdot \mathfrak{v}_h) + \int_{\partial\Omega} p^D_h\bm{v}^D_h\cdot\bm{n} \, \mathrm{d}\bm{x}  +\sum_{i \in \mathcal{N}_D} (p_h^N)_i (v_h^N)_{\mathcal{N}_i,i}.
\end{equation*}
Note that, due to the boundary conditions, $\bm{v}_h^D \cdot \bm{n} = 0$ on $\partial \Omega$ and $(p_h^N)_i = 0$, $i \in \mathcal{N}_D$, we simply have $(\mathfrak{D}_h \mathfrak{p}_h, \mathfrak{v}_h)_{\mathfrak{K}^{-1},h} = - (\mathfrak{p}_h, \mathfrak{D}\cdot \mathfrak{v}_h)$. Then the mass lumping mixed-formulation~\eqref{eqn:masslumping-mixed-fem-1} and~\eqref{eqn:masslumping-mixed-fem-2} can be written as, find $\mathfrak{q}_h \in H_{h,0}(\ddiv,\mathfrak{B})$ and $\mathfrak{p}_h \in L^2_h(\mathfrak{B})$, such that,
\begin{align*}
&(\mathfrak{q}_h, \mathfrak{v}_h)_{\mathfrak{K}^{-1},h} + (\mathfrak{D}_h\mathfrak{p}_h, \mathfrak{v}_h)_{\mathfrak{K}^{-1},h}  = 0, \quad \forall \ \mathfrak{v}_h \in H_{h,0}(\ddiv,\mathfrak{B}),  \\
&(\mathfrak{D}_h\mathfrak{w}_h, \mathfrak{q}_h)_{\mathfrak{K}^{-1},h}  = -(\mathfrak{r}, \mathfrak{w}_h), \quad \forall \ \mathfrak{w}_h \in \mathfrak{L}_h^2. 
\end{align*}
The above formulation allows us to eliminate $\mathfrak{q}_h$ and obtain the TPFA scheme as follows, find $\mathfrak{p}_h \in L^2_h(\mathfrak{B})$, such that
\begin{equation} \label{eqn:TPFA}
(\mathfrak{D}_h \mathfrak{p}_h, \mathfrak{D}_h \mathfrak{w}_h)_{\mathfrak{K}^{-1},h} = (\mathfrak{r}_h, \mathfrak{w}_h), \quad \forall \ \mathfrak{w}_h \in \mathfrak{L}_h^2. \end{equation}

Next we will explain the TPFA scheme~\eqref{eqn:TPFA} using matrix notation.  The matrix form of the mass lumping finite-element scheme~\eqref{eqn:masslumping-mixed-fem-1}-\eqref{eqn:masslumping-mixed-fem-2} can be written as
\begin{equation*}
\begin{pmatrix}
\mathsf{D}_D & \mathsf{0}   & \mathsf{0}   & \mathsf{G}_{DD} & \mathsf{0} \\
\mathsf{0}   & \mathsf{D}_S & \mathsf{0}   & \mathsf{G}_{SD} & \mathsf{G}_{SN} \\
\mathsf{0}   & \mathsf{0}   & \mathsf{D}_N & \sf{0}      & \mathsf{G}_{NN} \\
\mathsf{G}_{DD}^T & \mathsf{G}_{SD}^T & \mathsf{0} & \mathsf{0} & \mathsf{0} \\
\mathsf{0} & \mathsf{G}_{SN}^T & \mathsf{G}_{NN}^T & \mathsf{0} & \mathsf{0} 
\end{pmatrix}
\begin{pmatrix}
\mathsf{q}^D_h \\
\mathsf{q}^S_h \\
\mathsf{q}^N_h \\
\mathsf{p}^D_h \\
\mathsf{p}^N_h 
\end{pmatrix}
=
\begin{pmatrix}
\mathsf{0} \\
\mathsf{0} \\
\mathsf{0} \\
-\mathsf{r}^D \\
-\mathsf{r}^N
\end{pmatrix},
\end{equation*}
where 
$$
\sum_{\tau \in \mathcal{M}} \sum_{f \in \partial \tau} \omega_f \left( \bm{q}^D \cdot \bm{n}_f  \right) \left( \bm{v}^D \cdot \bm{n}_f \right) \mapsto \mathsf{D}_D, \ \sum_{i \in \mathcal{N}_T} \int_{B_i} q_i^S v_i^S \, \mathrm{d} \bm{x} \mapsto \mathsf{D}_S, \ \sum_{e(i,j)\in\mathcal{E}} \left( k^N_{e(i,j)} \right)^{-1} q_{i,j}^N v_{i,j}^N \mapsto \mathsf{D}_N,
$$
$$ -\int_{\Omega} p^D \, \nabla \cdot \bm{v}^D \, \mathrm{d} \bm{x} \mapsto \mathsf{G}_{DD}, \quad \sum_{i \in \mathcal{N}_T} \int_{B_i} k_i^S v_{i}^S p^D \, \mathrm{d} \bm{x} \mapsto \mathsf{G}_{SD},
$$ 
$$\sum_{i \in \mathcal{N}_T} (\int_{B_i} k_i^S v_i^S\, \mathrm{d} \bm{x})p_i^N \mapsto \mathsf{G}_{SN}, \ \text{and} \ \sum_{i\in \mathcal{N}_I \cup \mathcal{N}_N} (\sum_{j \in \mathcal{N}_i} v_{j,i}^N) p_i^h + \sum_{i \in \mathcal{N}_T} v^N_{\mathcal{N}_i,i} p^N_{i} \mapsto \mathsf{G}_{NN}.
$$ 
Since $\mathsf{D}_D$, $\mathsf{D}_s$, and $\mathsf{D_N}$ are diagonal matrices, we can eliminate them by block Gaussian elimination and end up with a linear system only involves solving for $\mathsf{p}^D_h$ and $\mathsf{p}^N_h$ as follows,
\begin{equation*}
\begin{pmatrix}
\mathsf{G}_{DD}^T & \mathsf{G}_{SD}^T & \mathsf{0}  \\
\mathsf{0} & \mathsf{G}_{SN}^T & \mathsf{G}_{NN}^T  
\end{pmatrix}
\begin{pmatrix}
\mathsf{D}_D & \mathsf{0}   & \mathsf{0}    \\
\mathsf{0}   & \mathsf{D}_S & \mathsf{0}    \\
\mathsf{0}   & \mathsf{0}   & \mathsf{D}_N
\end{pmatrix}^{-1}
\begin{pmatrix}
 \mathsf{G}_{DD} & \mathsf{0} \\
 \mathsf{G}_{SD} & \mathsf{G}_{SN} \\
 \sf{0}      & \mathsf{G}_{NN} 
\end{pmatrix}
\begin{pmatrix}
\mathsf{p}^D_h \\
\mathsf{p}^N_h 
\end{pmatrix}
= 
\begin{pmatrix}
\mathsf{r}^D \\
\mathsf{r}^N
\end{pmatrix},
\end{equation*} 
which is exactly the matrix form of the TPFA scheme~\eqref{eqn:TPFA}.

\subsection{Well-posedness}
Next we consider the well-posedness of the TPFA scheme~\eqref{eqn:TPFA}.  As we showed in the previous section, the TPFA scheme~\eqref{eqn:TPFA} is obtained from the mass lumpping mixed-formulation~\eqref{eqn:masslumping-mixed-fem-1}-\eqref{eqn:masslumping-mixed-fem-2} by block Gaussian elimination.  Therefore, we first show the well-posedness of the mass lumpping mixed-formulation~\eqref{eqn:masslumping-mixed-fem-1}-\eqref{eqn:masslumping-mixed-fem-2} and then the well-posedness of the TPFA scheme~\eqref{eqn:TPFA} follows directly.  

Since the only difference between the mixed-formulation~\eqref{eqn:mixed-fem-1}-\eqref{eqn:mixed-fem-2} and the
mass lumpping mixed-formulation~\eqref{eqn:masslumping-mixed-fem-1}-\eqref{eqn:masslumping-mixed-fem-2} is the inner product used for $H_h(\ddiv,\mathfrak{B})$, we first introduce the norm induced by the inner product~\eqref{eqn:masslump-Hhdiv-inner} as follows,
\begin{equation*}
\| \mathfrak{q}_h \|^2_{\mathfrak{K}^{-1},h} := (\mathfrak{q}_h ,\mathfrak{q}_h )_{\mathfrak{K}^{-1},h}, \quad \forall \, \mathfrak{q}_h \in H_h(\ddiv,\mathfrak{B}),
\end{equation*}
and show it is spectrally equivalent to the norm~\eqref{eqn:weighted-norm} in the following lemma.

\begin{lemma} \label{lem:norm-spectral-equivalent}
For any $\mathfrak{q}_h \in H_h(\ddiv,\mathfrak{B})$, we have
\begin{equation}\label{ine:norm-spectral-equivalent}
c_1 \| \mathfrak{q}_h \|^2_{\mathfrak{K}^{-1},h} \leq  \| \mathfrak{q}_h \|^2_{\mathfrak{K}^{-1}} \leq c_2 \| \mathfrak{q}_h \|^2_{\mathfrak{K}^{-1},h},
\end{equation}
where $c_1>0$ and $c_2>0$ are constants only depending on the shape regularity of the mesh $\mathcal{M}$.
\end{lemma}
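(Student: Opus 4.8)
The plan is to reduce the statement to a purely local estimate for the lowest-order Raviart--Thomas mass matrix. The first observation is that the two inner products differ \emph{only} in the block acting on the $\bm{q}^D$-component: the terms $\sum_{i\in\mathcal{N}_T}\int_{B_i} q_i^S v_i^S\,\mathrm{d}\bm{x}$ and $\sum_{e(i,j)\in\mathcal{E}}(k^N_{e(i,j)})^{-1}q^N_{i,j}v^N_{i,j}$ appear verbatim both in~\eqref{eqn:masslump-Hhdiv-inner} and in $(\cdot,\cdot)_{\mathfrak{K}^{-1}}$. Writing $\mathfrak{q}_h=[\bm{q}_h^D,q_h^S,q_h^N]$, set $A=\int_\Omega (k^D)^{-1}|\bm{q}_h^D|^2\,\mathrm{d}\bm{x}$, let $A_h=\sum_{\tau\in\mathcal{M}}\sum_{f\in\partial\tau}\omega_f(\bm{q}_h^D\cdot\bm{n}_f)^2$ be its mass-lumped counterpart, and let $D$ denote the common $q^S$- and $q^N$-terms, so that $\|\mathfrak{q}_h\|^2_{\mathfrak{K}^{-1}}=A+D$ and $\|\mathfrak{q}_h\|^2_{\mathfrak{K}^{-1},h}=A_h+D$. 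Since $A\ge 0$, $A_h\ge0$, $D\ge 0$, a two-sided bound $c_1 A_h\le A\le c_2 A_h$ immediately upgrades to $\min(c_1,1)(A_h+D)\le A+D\le \max(c_2,1)(A_h+D)$, which is precisely~\eqref{ine:norm-spectral-equivalent}. Hence it suffices to establish the spectral equivalence of $A$ and $A_h$ with shape-regularity-only constants.

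For this, I would localize to a single element $\tau\in\mathcal{M}$. An RT0 field is determined on $\tau$ by the vector $\bm{\xi}_\tau=(\bm{q}_h^D\cdot\bm{n}_f)_{f\in\partial\tau}\in\mathbb{R}^{m}$ of face normal fluxes ($m=n+1$ for simplices, $m=2n$ for cuboids), and one has $\int_\tau(k^D)^{-1}|\bm{q}_h^D|^2\,\mathrm{d}\bm{x}=\bm{\xi}_\tau^\top M_\tau\bm{\xi}_\tau$ and $\sum_{f\in\partial\tau}\omega_f(\bm{q}_h^D\cdot\bm{n}_f)^2=\bm{\xi}_\tau^\top D_\tau\bm{\xi}_\tau$, with $D_\tau=\diag(\omega_f)$ and $M_\tau$ both symmetric positive definite on $\mathbb{R}^m$. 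The goal is then $\lambda_\tau D_\tau\le M_\tau\le \Lambda_\tau D_\tau$ with $\lambda_\tau,\Lambda_\tau$ uniform in $\tau$. Mapping $\tau$ to a reference element $\hat\tau$ by the affine map and pushing the RT0 field forward by the contravariant Piola transform, the normal fluxes scale by explicit Jacobian and face-area factors; tracking these homogeneous scalings one gets $M_\tau=(k^D_\tau)^{-1}|\tau|\,B_\tau^\top\hat M B_\tau$ and, analogously, $D_\tau=(k^D_\tau)^{-1}|\tau|\,B_\tau^\top\hat D B_\tau$ up to the same geometric factors, where $\hat M,\hat D$ are fixed SPD matrices on $\hat\tau$ and $B_\tau$ collects geometric factors whose condition number is controlled by the minimal-angle/aspect-ratio bound. (If $k^D$ is not element-wise constant, comparing $\int_\tau (k^D)^{-1}|\cdot|^2$ with $(k^D_\tau)^{-1}\int_\tau|\cdot|^2$ introduces the local contrast $\sup_\tau k^D/\inf_\tau k^D$, which we assume uniformly bounded; it equals $1$ in the piecewise-constant case.) This yields $\lambda_\tau\ge c_1>0$ and $\Lambda_\tau\le c_2<\infty$ independent of $\tau$.

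Summing the element-wise bounds over $\tau\in\mathcal{M}$ gives $c_1 A_h\le A\le c_2 A_h$ with $c_1=\inf_\tau\lambda_\tau$, $c_2=\sup_\tau\Lambda_\tau$, and the lemma follows from the additivity remark in the first paragraph. The only genuine work is the reference-element linear-algebra computation: showing that the Piola-transformed RT0 mass matrix is spectrally equivalent to the diagonal matrix with entries $(k^D_\tau)^{-1}d_f/(2|f|)$ uniformly over the admissible family of element shapes. I expect the main obstacle to be exactly this uniformity — i.e. bounding the eigenvalues of $\hat M$ relative to $\hat D$ as the reference-to-physical map is allowed to degenerate — which is precisely what shape regularity prevents; this is the standard mass-lumping argument underlying the derivation of TPFA from the mixed method, and it applies here unchanged because the $q^S$- and $q^N$-blocks of the two inner products are identical.
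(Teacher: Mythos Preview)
Your proposal is correct and follows essentially the same approach as the paper: both reduce to the spectral equivalence of the RT0 mass matrix and its diagonal lumping on the $\bm{q}^D$-block, noting that the $q^S$- and $q^N$-terms coincide in the two inner products. The paper simply cites this RT0 equivalence as a standard result (attributed to Hiptmair), whereas you sketch its proof via the Piola transform and a reference-element argument; the overall structure is the same.
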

\begin{proof}
Based on the standard result, e.g.,~\cite{Hiptmair1997}, we have
\begin{equation*}
\bar{c}_1 \sum_{\tau \in \mathcal{M}} \sum_{f \in \partial \tau} \omega_f \left( \bm{q}_h^D \cdot \bm{n}_f  \right)^2 \leq \int_{\Omega} \left(k^D \right)^{-1} |\bm{q}^D_h|^2 \, \mathrm{d} \bm{x} \leq \bar{c}_2 \sum_{\tau \in \mathcal{M}} \sum_{f \in \partial \tau} \omega_f \left( \bm{q}_h^D \cdot \bm{n}_f  \right)^2
\end{equation*}
where  the positive constants $\bar{c}_1$ and $\bar{c}_2$ depend only the shape regularity of the mesh $\mathcal{M}$. Then the spectral equivalence~\eqref{ine:norm-spectral-equivalent} follows directly from the definitions of the norms. 
\end{proof}

Define 
\begin{equation}\label{eqn:norm-Hhdiv}
\| \mathfrak{q}_h \|_{H_h(\ddiv,\mathfrak{B})}^2 := \| \mathfrak{q}_h \|_{\mathfrak{K}^{-1},h}^2 + \| \mathfrak{D}\cdot \mathfrak{q}_h \|_{L^2(\mathfrak{B})}^2.
\end{equation}
We have the following lemmas concerning the continuity, ellipticity, and inf-sup condition for the mass lumping mixed-formulation~\eqref{eqn:masslumping-mixed-fem-1}-\eqref{eqn:masslumping-mixed-fem-2}.

\begin{lemma}[Continuity of~\eqref{eqn:masslumping-mixed-fem-1}-\eqref{eqn:masslumping-mixed-fem-2}] \label{lem:masslump-fem-continuity}
	For any $\mathfrak{q}_h$, $\mathfrak{v}_h \in H_h(\ddiv,\mathfrak{B})$ and $\mathfrak{w}_h \in \mathfrak{L}_h^2$, we have
	\begin{align*}
	(\mathfrak{q}_h, \mathfrak{v}_h)_{\mathfrak{K}^{-1},h} & \leq \| \mathfrak{q}_h \|_{H_h(\ddiv,\mathfrak{B})} \| \mathfrak{v}_h \|_{H_h(\ddiv,\mathfrak{B})}, \\
	(\mathfrak{D} \cdot \mathfrak{q}_h, \mathfrak{w}_h) & \leq   \| \mathfrak{q}_h \|_{H_h(\ddiv,\mathfrak{B})} \| \mathfrak{w}_h \|_{L^2(\mathfrak{B})}.
	\end{align*}
\end{lemma}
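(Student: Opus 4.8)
The plan is to reduce the continuity of the mass-lumped bilinear forms, which are measured in the new norm $\|\cdot\|_{H_h(\ddiv,\mathfrak{B})}$ defined in \eqref{eqn:norm-Hhdiv}, to the continuity already established for the standard inner products in Lemma~\ref{lem:continuity} (or Lemma~\ref{lem:fem-continuity}), using the spectral equivalence from Lemma~\ref{lem:norm-spectral-equivalent} only where needed. In fact, the cleanest route is to observe that the mass-lumped inner product $(\cdot,\cdot)_{\mathfrak{K}^{-1},h}$ is itself a genuine inner product on the finite-dimensional space $H_h(\ddiv,\mathfrak{B})$ (each summand is a sum of squares with strictly positive weights $\omega_f$, plus the unweighted $L^2(B_i)$ and weighted $l^2(\mathcal{E})$ pieces that are unchanged), so the Cauchy--Schwarz inequality applies directly to it.

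For the first estimate, I would simply apply the Cauchy--Schwarz inequality to the bilinear form $(\mathfrak{q}_h,\mathfrak{v}_h)_{\mathfrak{K}^{-1},h}$, obtaining
\[
(\mathfrak{q}_h,\mathfrak{v}_h)_{\mathfrak{K}^{-1},h} \leq \|\mathfrak{q}_h\|_{\mathfrak{K}^{-1},h}\,\|\mathfrak{v}_h\|_{\mathfrak{K}^{-1},h},
\]
and then bound each factor $\|\cdot\|_{\mathfrak{K}^{-1},h} \leq \|\cdot\|_{H_h(\ddiv,\mathfrak{B})}$ directly from the definition~\eqref{eqn:norm-Hhdiv}, since that norm is the sum of $\|\cdot\|_{\mathfrak{K}^{-1},h}^2$ and the nonnegative term $\|\mathfrak{D}\cdot(\cdot)\|_{L^2(\mathfrak{B})}^2$. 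This gives the first inequality with constant $1$, exactly parallel to the proof of Lemma~\ref{lem:continuity}. For the second estimate, the argument is identical to before: the term $\|\mathfrak{D}\cdot\mathfrak{q}_h\|_{L^2(\mathfrak{B})}$ appears literally inside $\|\mathfrak{q}_h\|_{H_h(\ddiv,\mathfrak{B})}$, so $(\mathfrak{D}\cdot\mathfrak{q}_h,\mathfrak{w}_h) \leq \|\mathfrak{D}\cdot\mathfrak{q}_h\|_{L^2(\mathfrak{B})}\|\mathfrak{w}_h\|_{L^2(\mathfrak{B})} \leq \|\mathfrak{q}_h\|_{H_h(\ddiv,\mathfrak{B})}\|\mathfrak{w}_h\|_{L^2(\mathfrak{B})}$ by Cauchy--Schwarz in $L^2(\mathfrak{B})$.

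There is essentially no obstacle here; the only point requiring a moment of care is confirming that $(\cdot,\cdot)_{\mathfrak{K}^{-1},h}$ is positive definite on $H_h(\ddiv,\mathfrak{B})$ so that the Cauchy--Schwarz inequality in that inner product is legitimate, which follows from the strict positivity of the weights $\omega_f = (k^D_\tau)^{-1} d_f/(2|f|)$ together with the fact that an $\mathrm{RT}_0$ flux is determined by its face normal components, and from the positivity of the coefficients $(k^N_{e(i,j)})^{-1}$ and the obvious positivity of the $L^2(B_i)$ term. Alternatively, and with no positivity check at all, one can invoke Lemma~\ref{lem:norm-spectral-equivalent} to pass from $(\cdot,\cdot)_{\mathfrak{K}^{-1},h}$ to $(\cdot,\cdot)_{\mathfrak{K}^{-1}}$, apply Lemma~\ref{lem:continuity}, and pass back, at the cost of a constant $c_2$ instead of $1$; since the lemma statement only claims the clean constant $1$, I would use the direct Cauchy--Schwarz argument above. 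The whole proof is one or two lines, and I expect the author's proof to just cite Cauchy--Schwarz and the definition of the norm, as in Lemma~\ref{lem:continuity}.
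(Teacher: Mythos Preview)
Your proposal is correct and matches the paper's approach: the paper does not spell out a proof for this lemma, treating it as immediate from Cauchy--Schwarz and the definition of the norm~\eqref{eqn:norm-Hhdiv}, exactly as in Lemma~\ref{lem:continuity}. Your observation that $(\cdot,\cdot)_{\mathfrak{K}^{-1},h}$ is a genuine inner product (so Cauchy--Schwarz applies directly with constant $1$) is precisely the implicit reasoning, and your anticipated one-line argument is what the authors have in mind.
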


For the ellipticity, again using the fact that, for $\mathfrak{q}_h \in H_h(\ddiv,\mathfrak{B})$, $\mathfrak{D}\cdot\mathfrak{q}_h \in \mathcal{L}^2_h$, we have
\begin{lemma}[Ellipticity of~\eqref{eqn:masslumping-mixed-fem-1}-\eqref{eqn:masslumping-mixed-fem-2}] \label{lem:masslump-fem-ellipticity}
	If $\mathfrak{q}_h \in H_h(\ddiv,\mathfrak{B})$ satisfies
	\begin{equation*} 
	(\mathfrak{D}\cdot \mathfrak{q}_h, \mathfrak{w}_h) = 0, \quad \forall \, \mathfrak{w}_h \in \mathfrak{L}_h^2,
	\end{equation*} 
	then
	\begin{equation*} 
	(\mathfrak{q}_h, \mathfrak{q}_h)_{\mathfrak{K}^{-1},h} = \| \mathfrak{q}_h \|_{H_h(\ddiv,\mathfrak{B})}^2
	\end{equation*}
\end{lemma}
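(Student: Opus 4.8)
The plan is to reproduce, essentially verbatim, the argument of Lemma~\ref{lem:ellipticity} (and its conforming Galerkin analogue Lemma~\ref{lem:fem-ellipticity}), the only bookkeeping change being that the weighted $H(\ddiv)$-part of the norm is now the lumped inner product $(\cdot,\cdot)_{\mathfrak{K}^{-1},h}$ instead of $(\cdot,\cdot)_{\mathfrak{K}^{-1}}$. The key structural observation is that, by the definition~\eqref{eqn:norm-Hhdiv},
\begin{equation*}
\| \mathfrak{q}_h \|_{H_h(\ddiv,\mathfrak{B})}^2 = \| \mathfrak{q}_h \|_{\mathfrak{K}^{-1},h}^2 + \| \mathfrak{D}\cdot \mathfrak{q}_h \|_{L^2(\mathfrak{B})}^2 = (\mathfrak{q}_h,\mathfrak{q}_h)_{\mathfrak{K}^{-1},h} + \| \mathfrak{D}\cdot \mathfrak{q}_h \|_{L^2(\mathfrak{B})}^2,
\end{equation*}
so the asserted identity is equivalent to the single claim $\| \mathfrak{D}\cdot \mathfrak{q}_h \|_{L^2(\mathfrak{B})} = 0$.

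To get that claim I would invoke the conformity property recalled in the paragraph just preceding the lemma, namely that $\mathfrak{q}_h \in H_h(\ddiv,\mathfrak{B})$ implies $\mathfrak{D}\cdot\mathfrak{q}_h \in \mathfrak{L}_h^2$ (this holds because $\nabla\cdot$ maps $H_h(\ddiv,\mathcal{M})$ into $P_0(\mathcal{M})$, each $q_i^S\in P_0(\mathcal{M}_i)$ contributes a scalar per terminal node, and the graph divergence $\mathcal{G}_{\mathcal{F}}^T$ maps edge DOFs to node DOFs). Hence $\mathfrak{D}\cdot\mathfrak{q}_h$ is itself an admissible test function in the hypothesis, and choosing $\mathfrak{w}_h = \mathfrak{D}\cdot\mathfrak{q}_h$ gives $\| \mathfrak{D}\cdot\mathfrak{q}_h \|_{L^2(\mathfrak{B})}^2 = (\mathfrak{D}\cdot\mathfrak{q}_h,\mathfrak{D}\cdot\mathfrak{q}_h) = 0$. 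Substituting back into the displayed splitting yields $\| \mathfrak{q}_h \|_{H_h(\ddiv,\mathfrak{B})}^2 = (\mathfrak{q}_h,\mathfrak{q}_h)_{\mathfrak{K}^{-1},h}$, which is the statement.

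There is essentially no obstacle: the one thing that must be true is precisely the conformity inclusion $\mathfrak{D}\cdot H_h(\ddiv,\mathfrak{B}) \subseteq \mathfrak{L}_h^2$, and that is already stated before the lemma. I would also add a one-line remark that, unlike the transfer of the inf-sup constant (Lemma~\ref{lem:fem-inf-sup}) or of coercivity to the TPFA scheme, which pays the spectral-equivalence constants $c_1,c_2$ of Lemma~\ref{lem:norm-spectral-equivalent}, the ellipticity identity here holds with constant exactly one, because the same lumped inner product $(\cdot,\cdot)_{\mathfrak{K}^{-1},h}$ appears in both the bilinear form of~\eqref{eqn:masslumping-mixed-fem-1} and the norm~\eqref{eqn:norm-Hhdiv}; no spectral equivalence is needed for this particular lemma.
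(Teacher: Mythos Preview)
Your proposal is correct and follows exactly the approach the paper intends: the paper does not write out a separate proof for this lemma but simply notes (in the sentence preceding it) that the conformity inclusion $\mathfrak{D}\cdot H_h(\ddiv,\mathfrak{B})\subseteq \mathfrak{L}_h^2$ lets the argument of Lemma~\ref{lem:ellipticity}/\ref{lem:fem-ellipticity} go through verbatim with the lumped norm~\eqref{eqn:norm-Hhdiv} in place of~\eqref{eqn:norm-Hdiv}. Your additional remark that no spectral equivalence is needed here is accurate and a nice clarification.
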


Moreover, the inf-sup condition  can be derived from the inf-sup condition (Lemma~\ref{lem:fem-inf-sup}) and the spectral equivalence lemma (Lemma~\ref{lem:norm-spectral-equivalent})
\begin{lemma}[Inf-sup condition of~\eqref{eqn:masslumping-mixed-fem-1}-\eqref{eqn:masslumping-mixed-fem-2}] \label{lem:masslump-fem-inf-sup}
	There exists a constant $\beta>0$ such that, for any given function $\mathfrak{r}_h\in L^2(\mathfrak{B})$,
	\begin{equation} \label{ine:masslump-fem-inf-sup}
	\sup_{\mathfrak{q}_h\in H_{h,0}(\ddiv,\mathfrak{B})} \frac{(\mathfrak{r}_h, \mathfrak{D} \cdot \mathfrak{q}_h)}{ \| \mathfrak{q}_h \|_{H_h(\ddiv,\mathfrak{B})} } \geq \beta \| \mathfrak{r}_h \|_{L^2(\mathfrak{B})}.
	\end{equation}
	Here, the inf-sup constant $\beta$ depends on $|\mathcal{M}_i| = \operatorname{measure}(\mathcal{M}_i) = \mathcal{O}(h^n)$, the maximal number of overlaps between $B_i$, structure of the trees $\mathcal{T} \in \mathcal{F}$, the domain $\Omega$, the constants $c_{k^S}$ and $C_{k^S}$, and the shape regularity of the mesh $\mathcal{M}$. 
\end{lemma}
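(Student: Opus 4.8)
The plan is to obtain \eqref{ine:masslump-fem-inf-sup} as a direct consequence of the inf-sup condition for the unlumped mixed finite-element scheme, Lemma~\ref{lem:fem-inf-sup}, transported through the norm equivalence of Lemma~\ref{lem:norm-spectral-equivalent}. The two results fit together with essentially no friction because the bilinear form appearing in the numerator, $(\mathfrak{r}_h,\mathfrak{D}\cdot\mathfrak{q}_h)$, and the space over which the supremum is taken, $H_{h,0}(\ddiv,\mathfrak{B})$, are \emph{identical} in the lumped and unlumped formulations; mass lumping changes only the inner product on the flux space, hence only the flux norm, from $\|\cdot\|_{H(\ddiv,\mathfrak{B})}$ in \eqref{eqn:norm-Hdiv} to $\|\cdot\|_{H_h(\ddiv,\mathfrak{B})}$ in \eqref{eqn:norm-Hhdiv}.

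First I would record that these two flux norms are themselves spectrally equivalent on $H_h(\ddiv,\mathfrak{B})$. The key observation is that both norms carry the \emph{same} divergence contribution $\|\mathfrak{D}\cdot\mathfrak{q}_h\|_{L^2(\mathfrak{B})}^2$, since mass lumping modifies the flux inner product but leaves the mixed-dimensional divergence operator $\mathfrak{D}\cdot$ (and the pressure space $\mathfrak{L}_h^2$) untouched. Adding this common term to the chain $c_1\|\mathfrak{q}_h\|_{\mathfrak{K}^{-1},h}^2 \le \|\mathfrak{q}_h\|_{\mathfrak{K}^{-1}}^2 \le c_2\|\mathfrak{q}_h\|_{\mathfrak{K}^{-1},h}^2$ from Lemma~\ref{lem:norm-spectral-equivalent} yields
\begin{equation*}
\min\{c_1,1\}\,\|\mathfrak{q}_h\|_{H_h(\ddiv,\mathfrak{B})}^2 \le \|\mathfrak{q}_h\|_{H(\ddiv,\mathfrak{B})}^2 \le \max\{c_2,1\}\,\|\mathfrak{q}_h\|_{H_h(\ddiv,\mathfrak{B})}^2
\end{equation*}
for all $\mathfrak{q}_h \in H_h(\ddiv,\mathfrak{B})$, with $c_1,c_2$ depending only on the shape regularity of $\mathcal{M}$.

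Next I would combine this with Lemma~\ref{lem:fem-inf-sup}. From the left inequality above, $\|\mathfrak{q}_h\|_{H_h(\ddiv,\mathfrak{B})} \le \min\{c_1,1\}^{-1/2}\,\|\mathfrak{q}_h\|_{H(\ddiv,\mathfrak{B})}$, so for every nonzero $\mathfrak{q}_h \in H_{h,0}(\ddiv,\mathfrak{B})$,
\begin{equation*}
\frac{(\mathfrak{r}_h,\mathfrak{D}\cdot\mathfrak{q}_h)}{\|\mathfrak{q}_h\|_{H_h(\ddiv,\mathfrak{B})}} \ge \min\{c_1,1\}^{1/2}\,\frac{(\mathfrak{r}_h,\mathfrak{D}\cdot\mathfrak{q}_h)}{\|\mathfrak{q}_h\|_{H(\ddiv,\mathfrak{B})}}.
\end{equation*}
Taking the supremum over $\mathfrak{q}_h \in H_{h,0}(\ddiv,\mathfrak{B})$ on both sides and applying Lemma~\ref{lem:fem-inf-sup} to $\mathfrak{r}_h \in \mathfrak{L}_h^2$ gives \eqref{ine:masslump-fem-inf-sup} with $\beta_{\mathrm{lump}} = \min\{c_1,1\}^{1/2}\,\beta$, where $\beta$ is the inf-sup constant of Lemma~\ref{lem:fem-inf-sup}. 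This also explains the stated dependencies: $\beta$ carries the dependence on $|\mathcal{M}_i|$, the overlap number of the $B_i$, the tree structure, $\Omega$, $c_{k^S}$, and $C_{k^S}$, while $c_1$ contributes the dependence on the shape regularity of $\mathcal{M}$.

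There is no substantive obstacle here: the argument is a two-line corollary of Lemmas~\ref{lem:fem-inf-sup} and~\ref{lem:norm-spectral-equivalent}. The one point worth stating explicitly — and the only place a careless treatment could go wrong — is that mass lumping alters \emph{only} the flux norm, leaving $\mathfrak{D}\cdot$, $\mathfrak{L}_h^2$, and the numerator $(\mathfrak{r}_h,\mathfrak{D}\cdot\mathfrak{q}_h)$ unchanged, which is exactly what permits the norm equivalence to be inserted so directly. (An alternative, more self-contained route would be to re-run the explicit flux construction from the proof of Lemma~\ref{lem:fem-inf-sup} and re-estimate the constructed $\bm{q}_h^D$ in the lumped norm via Lemma~\ref{lem:norm-spectral-equivalent}, but the argument above is shorter and avoids repeating the construction.)
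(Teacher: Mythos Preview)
Your proposal is correct and matches the paper's own proof, which simply states that \eqref{ine:masslump-fem-inf-sup} ``can be derived from the inf-sup condition~\eqref{ine:fem-inf-sup} and the spectral equivalence result~\eqref{ine:norm-spectral-equivalent}''; you have merely written out the two lines the paper leaves implicit. One tiny imprecision: the pointwise inequality you write ``for every nonzero $\mathfrak{q}_h$'' fails when the numerator $(\mathfrak{r}_h,\mathfrak{D}\cdot\mathfrak{q}_h)$ is negative, but this is harmless since the supremum is attained among $\mathfrak{q}_h$ with nonnegative numerator.
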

\begin{proof}
The inf-sup condition~\eqref{ine:masslump-fem-inf-sup} can be derived from the inf-sup condition~\eqref{ine:fem-inf-sup} and the spectral equivalence result~\eqref{ine:norm-spectral-equivalent}. 
\end{proof}

Now the well-posedness of the mass lumping mixed formulation~\eqref{eqn:masslumping-mixed-fem-1} and~\eqref{eqn:masslumping-mixed-fem-2} follows from Lemmas~\ref{lem:masslump-fem-continuity}, \ref{lem:masslump-fem-ellipticity}, and~\ref{lem:masslump-fem-inf-sup}.
\begin{theorem}[Well-posedness of~\eqref{eqn:masslumping-mixed-fem-1}-\eqref{eqn:masslumping-mixed-fem-2}] \label{thm:masslump-fem-wellposedness}
	The mass lumping mixed formulation~\eqref{eqn:masslumping-mixed-fem-1}-\eqref{eqn:masslumping-mixed-fem-2} is well-posed with respect to the norms~\eqref{eqn:norm-Hhdiv} and~\eqref{eqn:norm-L2}.
\end{theorem}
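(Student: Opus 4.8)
The plan is to invoke the classical well-posedness theory for mixed (saddle-point) variational problems, as found in, e.g., \cite{boffi2013mixed,brezzi2012mixed}, exactly as was done for Theorem~\ref{thm:fem-wellposedness}. The only structural change relative to the conforming mixed finite-element scheme~\eqref{eqn:mixed-fem-1}--\eqref{eqn:mixed-fem-2} is that the inner product $(\cdot,\cdot)_{\mathfrak{K}^{-1}}$ on $H_h(\ddiv,\mathfrak{B})$ has been replaced by the mass-lumped inner product $(\cdot,\cdot)_{\mathfrak{K}^{-1},h}$ of~\eqref{eqn:masslump-Hhdiv-inner}; the off-diagonal bilinear form $b(\mathfrak{v}_h,\mathfrak{w}_h) := -(\mathfrak{D}\cdot\mathfrak{v}_h,\mathfrak{w}_h)$ and the pressure space $\mathfrak{L}_h^2$ are untouched.

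First I would fix the functional setting: the trial/test spaces are the finite-dimensional Hilbert spaces $H_{h,0}(\ddiv,\mathfrak{B})$ and $\mathfrak{L}_h^2$, normed by $\|\cdot\|_{H_h(\ddiv,\mathfrak{B})}$ from~\eqref{eqn:norm-Hhdiv} and $\|\cdot\|_{L^2(\mathfrak{B})}$ from~\eqref{eqn:norm-L2}, respectively; that $\|\cdot\|_{H_h(\ddiv,\mathfrak{B})}$ is genuinely a norm follows from the spectral equivalence of Lemma~\ref{lem:norm-spectral-equivalent} together with the fact that $\|\cdot\|_{H(\ddiv,\mathfrak{B})}$ is a norm. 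The three hypotheses of Brezzi's theorem are then supplied verbatim by the lemmas just proved: boundedness of $(\cdot,\cdot)_{\mathfrak{K}^{-1},h}$ and of $b$ is Lemma~\ref{lem:masslump-fem-continuity}; coercivity of $(\cdot,\cdot)_{\mathfrak{K}^{-1},h}$ on the kernel $Z_h := \{\mathfrak{q}_h \in H_{h,0}(\ddiv,\mathfrak{B}) : (\mathfrak{D}\cdot\mathfrak{q}_h,\mathfrak{w}_h)=0 \ \forall\,\mathfrak{w}_h\in\mathfrak{L}_h^2\}$ is Lemma~\ref{lem:masslump-fem-ellipticity}, with coercivity constant $1$ since on $Z_h$ one has $\|\mathfrak{D}\cdot\mathfrak{q}_h\|_{L^2(\mathfrak{B})}=0$ and hence $\|\mathfrak{q}_h\|_{H_h(\ddiv,\mathfrak{B})}^2=\|\mathfrak{q}_h\|_{\mathfrak{K}^{-1},h}^2$; and the inf-sup condition for $b$ is Lemma~\ref{lem:masslump-fem-inf-sup}. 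Applying the abstract theorem then yields existence and uniqueness of the pair $(\mathfrak{q}_h,\mathfrak{p}_h)$ solving~\eqref{eqn:masslumping-mixed-fem-1}--\eqref{eqn:masslumping-mixed-fem-2}, together with an a priori bound $\|\mathfrak{q}_h\|_{H_h(\ddiv,\mathfrak{B})} + \|\mathfrak{p}_h\|_{L^2(\mathfrak{B})} \leq C\,\|\mathfrak{r}\|_{L^2(\mathfrak{B})}$ with $C$ depending only on the continuity, coercivity, and inf-sup constants above.

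There is essentially no obstacle left, since all the analytical work has been front-loaded into Lemmas~\ref{lem:norm-spectral-equivalent}--\ref{lem:masslump-fem-inf-sup}; the single point meriting a line of care is that the kernel $Z_h$ and the divergence bilinear form $b$ are literally identical to those of the unlumped scheme, so coercivity on $Z_h$ is immediate and the lumping enters only through the constants $c_1,c_2$ of Lemma~\ref{lem:norm-spectral-equivalent}, hence affecting the stability constant but not well-posedness itself. Finally, I would remark that well-posedness of the TPFA scheme~\eqref{eqn:TPFA} follows at once: as shown in Subsection~\ref{subsec:TPFA} it is obtained from~\eqref{eqn:masslumping-mixed-fem-1}--\eqref{eqn:masslumping-mixed-fem-2} by exact block Gaussian elimination of the invertible, block-diagonal flux blocks $\mathsf{D}_D,\mathsf{D}_S,\mathsf{D}_N$, so its solvability and stability are inherited directly from Theorem~\ref{thm:masslump-fem-wellposedness}.
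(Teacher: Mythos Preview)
Your proposal is correct and follows essentially the same approach as the paper: the paper simply states that the well-posedness follows from Lemmas~\ref{lem:masslump-fem-continuity}, \ref{lem:masslump-fem-ellipticity}, and~\ref{lem:masslump-fem-inf-sup} via the standard saddle-point theory, exactly as you outline. Your additional remarks (that $\|\cdot\|_{H_h(\ddiv,\mathfrak{B})}$ is a genuine norm via Lemma~\ref{lem:norm-spectral-equivalent}, and the consequence for the TPFA scheme) are correct but go slightly beyond what is needed here, the latter being the content of the subsequent Theorem~\ref{thm:TPFA-wellposedness}.
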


Finally, the well-posedness of the TPFA scheme~\eqref{eqn:TPFA} follows directly from Theorem~\eqref{thm:masslump-fem-wellposedness} and the equivalence between the TPFA scheme~\eqref{eqn:TPFA} and the mass lumpping mixed-formulation~\eqref{eqn:masslumping-mixed-fem-1}-\eqref{eqn:masslumping-mixed-fem-2}.  The result is summarized in the following theorem.

\begin{theorem}[Well-posedness of~\eqref{eqn:TPFA}] \label{thm:TPFA-wellposedness}
The TPFA scheme~\eqref{eqn:TPFA} is well-posed. 
\end{theorem}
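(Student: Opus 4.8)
The plan is to deduce Theorem~\ref{thm:TPFA-wellposedness} directly from two facts already established: the well-posedness of the mass-lumping mixed formulation (Theorem~\ref{thm:masslump-fem-wellposedness}) and the algebraic equivalence between the TPFA scheme~\eqref{eqn:TPFA} and that mixed formulation, which was derived in Section~\ref{subsec:TPFA} by eliminating the flux unknowns via block Gaussian elimination. Since the TPFA system is obtained from a well-posed saddle-point system by exact elimination of the diagonal (hence invertible) blocks $\mathsf{D}_D$, $\mathsf{D}_S$, $\mathsf{D}_N$, the resulting Schur complement system is itself uniquely solvable; moreover the discrete gradient $\mathfrak{D}_h$ was constructed precisely so that~\eqref{eqn:TPFA} reads $(\mathfrak{D}_h\mathfrak{p}_h,\mathfrak{D}_h\mathfrak{w}_h)_{\mathfrak{K}^{-1},h} = (\mathfrak{r}_h,\mathfrak{w}_h)$ for all $\mathfrak{w}_h\in\mathfrak{L}_h^2$.

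Concretely, I would proceed in three short steps. First, recall from Section~\ref{subsec:TPFA} that $\mathfrak{q}_h$ can be eliminated exactly because the mass-lumped inner product~\eqref{eqn:masslump-Hhdiv-inner} is block-diagonal with invertible diagonal blocks, so $(\mathfrak{p}_h,\mathfrak{q}_h)$ solves~\eqref{eqn:masslumping-mixed-fem-1}-\eqref{eqn:masslumping-mixed-fem-2} if and only if $\mathfrak{p}_h$ solves~\eqref{eqn:TPFA} and $\mathfrak{q}_h = -\mathfrak{D}_h\mathfrak{p}_h$. Second, invoke Theorem~\ref{thm:masslump-fem-wellposedness}: for any right-hand side there is a unique $(\mathfrak{q}_h,\mathfrak{p}_h)$; by the equivalence this yields existence and uniqueness of $\mathfrak{p}_h$ for~\eqref{eqn:TPFA}, i.e. the symmetric positive (semi-)definite operator $\mathfrak{w}_h\mapsto (\mathfrak{D}_h\,\cdot\,,\mathfrak{D}_h\mathfrak{w}_h)_{\mathfrak{K}^{-1},h}$ is in fact invertible on $\mathfrak{L}_h^2$. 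Third, transfer the stability bound: the inf-sup condition of Lemma~\ref{lem:masslump-fem-inf-sup} together with the ellipticity of Lemma~\ref{lem:masslump-fem-ellipticity} gives a bound $\|\mathfrak{p}_h\|_{L^2(\mathfrak{B})} \le C\|\mathfrak{r}_h\|_{L^2(\mathfrak{B})}$ with $C$ depending only on $\beta$ and the spectral-equivalence constants $c_1,c_2$ from Lemma~\ref{lem:norm-spectral-equivalent}; since $\mathfrak{q}_h = -\mathfrak{D}_h\mathfrak{p}_h$ is then also controlled, the pair is stable in the norms~\eqref{eqn:norm-Hhdiv}, \eqref{eqn:norm-L2}.

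There is essentially no obstacle here: the substantive work was done in Lemmas~\ref{lem:masslump-fem-continuity}-\ref{lem:masslump-fem-inf-sup} and Theorem~\ref{thm:masslump-fem-wellposedness}. The only point requiring a line of care is observing that the elimination is \emph{exact} (no approximation is introduced), which hinges on $\mathsf{D}_D$, $\mathsf{D}_S$, $\mathsf{D}_N$ being genuinely diagonal and nonsingular — diagonal by the mass-lumping construction in~\eqref{eqn:masslump-Hhdiv-inner} together with the choice of $P_0(\mathcal{M}_i)$ and edge/vertex DOFs, and nonsingular because $\omega_f>0$, $k^N_{e(i,j)}>0$, and the weight on $q^S$ is unity by~\eqref{eqn:kS}. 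Given this, the well-posedness of the Schur complement system is a standard linear-algebra fact, and one may alternatively phrase the whole argument intrinsically: $(\mathfrak{D}_h\,\cdot\,,\mathfrak{D}_h\,\cdot\,)_{\mathfrak{K}^{-1},h}$ is symmetric and coercive on $\mathfrak{L}_h^2$ modulo the kernel of $\mathfrak{D}_h$, and that kernel is trivial exactly because of the inf-sup condition~\eqref{ine:masslump-fem-inf-sup}, so the Lax-Milgram lemma applies. Either route concludes the proof.

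\begin{proof}
As shown in Section~\ref{subsec:TPFA}, the matrices $\mathsf{D}_D$, $\mathsf{D}_S$, and $\mathsf{D}_N$ arising from the mass-lumped inner product~\eqref{eqn:masslump-Hhdiv-inner} are diagonal and, since $\omega_f > 0$, $(k^N_{e(i,j)})^{-1} > 0$, and the weight applied to $q^S$ is unity (see~\eqref{eqn:kS}), they are nonsingular. Consequently, the flux unknowns $\mathfrak{q}_h = [\mathsf{q}^D_h, \mathsf{q}^S_h, \mathsf{q}^N_h]$ can be eliminated exactly from the mass-lumping mixed formulation~\eqref{eqn:masslumping-mixed-fem-1}-\eqref{eqn:masslumping-mixed-fem-2} by block Gaussian elimination, and the resulting Schur complement system for $\mathfrak{p}_h$ is precisely the TPFA scheme~\eqref{eqn:TPFA}. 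More precisely, using the discrete gradient $\mathfrak{D}_h$ defined via integration by parts, a pair $(\mathfrak{q}_h, \mathfrak{p}_h) \in H_{h,0}(\ddiv,\mathfrak{B}) \times \mathfrak{L}_h^2$ solves~\eqref{eqn:masslumping-mixed-fem-1}-\eqref{eqn:masslumping-mixed-fem-2} if and only if $\mathfrak{p}_h \in \mathfrak{L}_h^2$ solves~\eqref{eqn:TPFA} and $\mathfrak{q}_h = -\mathfrak{D}_h \mathfrak{p}_h$.

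By Theorem~\ref{thm:masslump-fem-wellposedness}, the mass-lumping mixed formulation~\eqref{eqn:masslumping-mixed-fem-1}-\eqref{eqn:masslumping-mixed-fem-2} is well-posed; in particular, for every $\mathfrak{r}_h \in \mathfrak{L}_h^2$ there exists a unique pair $(\mathfrak{q}_h, \mathfrak{p}_h)$ solving it, with
\begin{equation*}
\| \mathfrak{q}_h \|_{H_h(\ddiv,\mathfrak{B})} + \| \mathfrak{p}_h \|_{L^2(\mathfrak{B})} \leq C \| \mathfrak{r}_h \|_{L^2(\mathfrak{B})},
\end{equation*}
where $C$ depends only on the continuity, ellipticity, and inf-sup constants of Lemmas~\ref{lem:masslump-fem-continuity}, \ref{lem:masslump-fem-ellipticity}, and~\ref{lem:masslump-fem-inf-sup}. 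By the equivalence established above, existence and uniqueness of the solution $(\mathfrak{q}_h, \mathfrak{p}_h)$ is equivalent to existence and uniqueness of $\mathfrak{p}_h \in \mathfrak{L}_h^2$ solving~\eqref{eqn:TPFA}. Hence the symmetric positive semidefinite bilinear form $(\mathfrak{D}_h \cdot, \mathfrak{D}_h \cdot)_{\mathfrak{K}^{-1},h}$ on $\mathfrak{L}_h^2$ has trivial kernel and~\eqref{eqn:TPFA} is uniquely solvable, with
\begin{equation*}
\| \mathfrak{p}_h \|_{L^2(\mathfrak{B})} \leq C \| \mathfrak{r}_h \|_{L^2(\mathfrak{B})}.
\end{equation*}
Therefore the TPFA scheme~\eqref{eqn:TPFA} is well-posed.
\end{proof}
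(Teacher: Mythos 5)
Your proof is correct and follows essentially the same route as the paper: well-posedness of the TPFA scheme is deduced from Theorem~\ref{thm:masslump-fem-wellposedness} together with the equivalence to the mass-lumping mixed formulation established by exact block Gaussian elimination of the diagonal flux blocks. Your additional remarks on the invertibility of $\mathsf{D}_D$, $\mathsf{D}_S$, $\mathsf{D}_N$ and the transfer of the stability bound simply make explicit what the paper leaves implicit.
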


\subsection{Convergence}
Regarding the convergence result of the TPFA scheme, since we use mass-lumping technique to derive it, existing theoretical tools developed in~\cite{BarangerMaitreOudin1996,BrezziFortinMarini2006} can be adopted here.  For the sake of the simplicity, in this subsection, we assume that $k^D$ is constant on each element $\tau \in \mathcal{M}$ and the mesh $\mathcal{M}$ is uniform  (e.g., rectangle/equilateral triangle in 2D, rectangular cuboid/regular tetrahedra in 3D).  Under those conditions, as shown in~\cite{BarangerMaitreOudin1996}, for $\tau \in \mathcal{M}$, $\sum_{f \in \partial \tau} \omega_f \left( \bm{q}^D \cdot \bm{n}_f  \right) \left( \bm{v}^D \cdot \bm{n}_f \right)$ used in the definition~\eqref{eqn:masslump-Hhdiv-inner} provides a numerical integration formula of $\int_{\tau} (k^D)^{-1} \bm{q}^D \bm{v}^D \, \mathrm{d} \bm{x}$ and such a numerical integration is exact for constant functions on each element $\tau$.  Moreover, the following perturbation result holds for $\bm{q}^D, \bm{v}^D \in H_{h,0}(\ddiv, \mathcal{M})$,
\begin{equation}\label{ine:mass-lump-perturb}
| \int_{\tau} (k^D)^{-1} \bm{q}^D \bm{v}^D \, \mathrm{d} \bm{x} - \sum_{f \in \partial \tau} \omega_f \left( \bm{q}^D \cdot \bm{n}_f  \right) \left( \bm{v}^D \cdot \bm{n}_f \right) | \leq c h_{\tau}^2 \| \bm{q}^D \|_{H(\ddiv, \tau)} \| \bm{v}^D \|_{H(\ddiv, \tau)}.
\end{equation}
Based on the above result, we can easily verify that, for $\mathfrak{q}_h$, $\mathfrak{v}_h \in H_h(\ddiv,\mathfrak{B})$, 
\begin{equation*}
|  (\mathfrak{q}_h, \mathfrak{v}_h)_{\mathfrak{K}^{-1}}  - (\mathfrak{q}_h, \mathfrak{q}_h)_{\mathfrak{K}^{-1},h}  |  \leq c h^2 \| \mathfrak{q}_h \|_{H(\ddiv,\mathfrak{B})} \| \mathfrak{v}_h \|_{H(\ddiv,\mathfrak{B})}.
\end{equation*}
Now, we can use the theory developed in~\cite{RobertsThomas1991} and conclude the convergence result of the TPFA scheme in the following theorem. 
\begin{theorem}
Suppose that $\mathfrak{q} \in H_0(\ddiv,\mathfrak{B})$ and $\mathfrak{p}\in L^2(\mathfrak{B})$ satisfy the weak formulation~\eqref{eqn:weak-form-1}-\eqref{eqn:weak-form-2}, then the finite-element solution $\mathfrak{q}_h \in H_{h,0}(\ddiv,\mathfrak{B})$ and $\mathfrak{p}_h\in \mathfrak{L}_h^2$ of the mass lumping mixed finite-element approximation~\eqref{eqn:masslumping-mixed-fem-1}-\eqref{eqn:masslumping-mixed-fem-2} satisfy that
\begin{align}
\| \mathfrak{q} - \mathfrak{q}_h\|_{H(\ddiv,\mathfrak{B})} + \| \mathfrak{p} - \mathfrak{p}_h \|_{L^2(\mathfrak{B})} \leq c h \left(  \| \bm{q}^D \|_1 + \| \nabla \cdot \bm{q}^D \|_1 + \sum_{i \in \mathcal{N}_T} \| q_i^S \|_1  + \| p^D \|_1 \right) \label{ine:masslumping-error-estimate}
\end{align}
where the constant $c$ depends only on $\beta$, $k^D$, the maximal number of the overlap between $\mathcal{M}_i$, $\max_{i}\{|\mathcal{M}_i|\}$, and quasi-uniformity of the mesh $\mathcal{M}$.
\end{theorem}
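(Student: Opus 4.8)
The plan is to treat \eqref{eqn:masslumping-mixed-fem-1}--\eqref{eqn:masslumping-mixed-fem-2} as a perturbation of the conforming mixed scheme \eqref{eqn:mixed-fem-1}--\eqref{eqn:mixed-fem-2}: the only change is that the weighted inner product $(\cdot,\cdot)_{\mathfrak{K}^{-1}}$ on the flux space has been replaced by the mass-lumped inner product $(\cdot,\cdot)_{\mathfrak{K}^{-1},h}$, while the off-diagonal form $(\mathfrak{w},\mathfrak{D}\cdot\mathfrak{v})$ and the right-hand side are untouched. Accordingly I would invoke the Strang-type error theory for mixed (saddle-point) problems with a perturbed principal bilinear form, as developed in \cite{RobertsThomas1991} and also found in \cite{BrezziFortinMarini2006,boffi2013mixed}. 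The hypotheses that theory requires are precisely the three properties already established for the lumped form: continuity (Lemma~\ref{lem:masslump-fem-continuity}), ellipticity on the discrete divergence kernel (Lemma~\ref{lem:masslump-fem-ellipticity}), and the inf-sup condition with an $h$-independent constant $\beta$ (Lemma~\ref{lem:masslump-fem-inf-sup}), which together yield well-posedness (Theorem~\ref{thm:masslump-fem-wellposedness}). I would stress that, since $\mathfrak{D}\cdot$ maps $H_{h,0}(\ddiv,\mathfrak{B})$ into $\mathfrak{L}_h^2$, ellipticity is only needed on the kernel, so no spurious constants enter; and by the spectral equivalence of Lemma~\ref{lem:norm-spectral-equivalent} one may freely pass between the lumped norm $\|\cdot\|_{H_h(\ddiv,\mathfrak{B})}$ occurring in the discrete stability and the norm $\|\cdot\|_{H(\ddiv,\mathfrak{B})}$ used in the statement, at the cost of a constant depending on the shape regularity of $\mathcal{M}$ and on $k^D$.

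Second, the abstract theory then produces an estimate of the form
\[
\|\mathfrak{q}-\mathfrak{q}_h\|_{H(\ddiv,\mathfrak{B})} + \|\mathfrak{p}-\mathfrak{p}_h\|_{L^2(\mathfrak{B})}
\le c\Big( \inf_{\mathfrak{v}_h\in H_{h,0}(\ddiv,\mathfrak{B})} \|\mathfrak{q}-\mathfrak{v}_h\|_{H(\ddiv,\mathfrak{B})} + \inf_{\mathfrak{w}_h\in\mathfrak{L}_h^2} \|\mathfrak{p}-\mathfrak{w}_h\|_{L^2(\mathfrak{B})} + E_h \Big),
\]
where the consistency (quadrature) error is
\[
E_h := \sup_{0\neq\mathfrak{v}_h\in H_{h,0}(\ddiv,\mathfrak{B})} \frac{\big|(\Pi_h\mathfrak{q},\mathfrak{v}_h)_{\mathfrak{K}^{-1}} - (\Pi_h\mathfrak{q},\mathfrak{v}_h)_{\mathfrak{K}^{-1},h}\big|}{\|\mathfrak{v}_h\|_{H(\ddiv,\mathfrak{B})}},
\]
and $\Pi_h\mathfrak{q} := [\pi_{\ddiv}\bm{q}^D,\pi_0 q^S,q^N]$ is the canonical interpolant onto $H_{h,0}(\ddiv,\mathfrak{B})$, which is a Fortin operator by the commuting property $\ddiv\,\pi_{\ddiv}=\pi_0\,\ddiv$; note that $E_h$ must be written with the interpolant of $\mathfrak{q}$ rather than $\mathfrak{q}$ itself, since $(\cdot,\cdot)_{\mathfrak{K}^{-1},h}$ is defined only on the finite-element space. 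The two best-approximation terms are bounded exactly as in the proof of Corollary~\ref{coro:fem-convergence}: choosing $\bm{v}_h^D=\pi_{\ddiv}\bm{q}^D$, $v_h^S=\pi_0 q^S$, $v_h^N=q^N$ and $w_h^D=\pi_0 p^D$, $w_h^N=p^N$ and applying the classical $\mathcal{O}(h)$ interpolation estimates contributes the terms $\|\bm{q}^D\|_1$, $\|\nabla\cdot\bm{q}^D\|_1$, $\sum_{i\in\mathcal{N}_T}\|q_i^S\|_1$ and $\|p^D\|_1$.

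Third, I would show $E_h=\mathcal{O}(h^2)$, so that it is of higher order than the interpolation error and does not degrade the rate. Because mass lumping modifies only the domain block $\bm{q}^D$, whereas the terminal block $q^S$ and the edge block $q^N$ are integrated, respectively summed, exactly, the difference $(\Pi_h\mathfrak{q},\mathfrak{v}_h)_{\mathfrak{K}^{-1}}-(\Pi_h\mathfrak{q},\mathfrak{v}_h)_{\mathfrak{K}^{-1},h}$ collapses to the element quadrature error for $\int_\Omega(k^D)^{-1}\bm{q}^D\cdot\bm{v}^D\,\mathrm{d}\bm{x}$. Summing the local bound \eqref{ine:mass-lump-perturb} over $\tau\in\mathcal{M}$ and using stability of $\pi_{\ddiv}$ gives
\[
\big|(\Pi_h\mathfrak{q},\mathfrak{v}_h)_{\mathfrak{K}^{-1}} - (\Pi_h\mathfrak{q},\mathfrak{v}_h)_{\mathfrak{K}^{-1},h}\big| \le c h^2 \,\|\pi_{\ddiv}\bm{q}^D\|_{H(\ddiv,\Omega)}\,\|\bm{v}_h^D\|_{H(\ddiv,\Omega)} \le c h^2 \big(\|\bm{q}^D\|_1 + \|\nabla\cdot\bm{q}^D\|\big)\,\|\mathfrak{v}_h\|_{H(\ddiv,\mathfrak{B})},
\]
hence $E_h\le ch^2(\|\bm{q}^D\|_1+\|\nabla\cdot\bm{q}^D\|_1)$. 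Combining the three contributions yields the asserted estimate, with $c$ inheriting its dependence on $\beta$, on $k^D$ and mesh shape regularity (via Lemma~\ref{lem:norm-spectral-equivalent} and \eqref{ine:mass-lump-perturb}), and on $\max_i|\mathcal{M}_i|$ and the maximal overlap number of the $\mathcal{M}_i$ (which enter through the divergence and coupling bounds, as in Lemma~\ref{lem:masslump-fem-inf-sup}). Finally, since the TPFA scheme \eqref{eqn:TPFA} is algebraically equivalent to \eqref{eqn:masslumping-mixed-fem-1}--\eqref{eqn:masslumping-mixed-fem-2} through the block Gaussian elimination described above, the same bound transfers to the TPFA pressure.

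I expect the main obstacle to be organising the Strang/consistency argument rather than any single estimate: one must phrase the abstract perturbation result so that only discrete arguments ever appear inside $(\cdot,\cdot)_{\mathfrak{K}^{-1},h}$, verify that the $h$-uniform ellipticity-on-kernel and inf-sup hypotheses of \cite{RobertsThomas1991} genuinely hold in the present mixed-dimensional setting (they do, by Lemmas~\ref{lem:masslump-fem-continuity}--\ref{lem:masslump-fem-inf-sup}), and keep careful track of which structural constants ($\beta$, the overlap number, $\max_i|\mathcal{M}_i|$, mesh regularity, $k^D$) propagate into $c$. The quadrature estimate itself is routine once \eqref{ine:mass-lump-perturb} is in hand.
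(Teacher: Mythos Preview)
Your proposal is correct and follows essentially the same route as the paper: the paper's own argument is simply to combine the global quadrature--perturbation bound $|(\mathfrak{q}_h,\mathfrak{v}_h)_{\mathfrak{K}^{-1}}-(\mathfrak{q}_h,\mathfrak{v}_h)_{\mathfrak{K}^{-1},h}|\le ch^2\|\mathfrak{q}_h\|_{H(\ddiv,\mathfrak{B})}\|\mathfrak{v}_h\|_{H(\ddiv,\mathfrak{B})}$ with the abstract Strang-type theory for perturbed mixed problems from \cite{RobertsThomas1991}, and then use the interpolation choices from Corollary~\ref{coro:fem-convergence}. You have written out explicitly the consistency term $E_h$ and the role of the Fortin interpolant $\Pi_h$, which the paper leaves implicit, but the ingredients and the logical structure are the same.
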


Consequentially, this also implies the convergence result of the TPFA scheme because of the equivalence between the TPFA scheme~\eqref{eqn:TPFA} and the mass lumpping mixed-formulation~\eqref{eqn:masslumping-mixed-fem-1} and~\eqref{eqn:masslumping-mixed-fem-2}.

\begin{remark}
As pointed out in Remark~\ref{rem:fem-err-reg}, the regularity requirement $\nabla \cdot \bm{q}^D \in H_1(\Omega)$ can be relaxed here as well and similar convergence analysis still holds.  
\end{remark}

\begin{remark}
As shown in~\cite{BarangerMaitreOudin1996,BrezziFortinMarini2006}, similar results hold for some more general meshes.  For example, the perturbation result~\eqref{ine:mass-lump-perturb} hold for general triangles in 2D with order $h$ instead of order $h^2$. However, this still leads to the error estimate~\eqref{ine:masslumping-error-estimate} based on the same procedure.  For general triangulation in 3D, convergence analysis for standard mixed-formulation Poisson problem with mass lumping was derived based on a different approach in~\cite{BrezziFortinMarini2006}.  We can also adopt a similar approach to derive the convergence result for our mass lumping mixed finite-element scheme as well to obtain the error estimate~\eqref{ine:masslumping-error-estimate} for general triangulation as well. 
\end{remark}

\section{Numerical Results} \label{sec:numerics}
In this section, we include three numerical results to validate and explore the discretization and solver presented above. In particular, the first case contains the simplest possible geometry in 2D, on which we compare the discretization to a series solution (Bessel functions). In the second case, we have a more complex geometry embedded in 4D, which can be seen as a prototype of the geometries relevant for applications. In both the first and second cases, we perform convergence studies both for the discretization and multigrid solver. Finally, in the third case, we apply the methodology to a real dataset, based on the human brain. 

The error is measured in the norms proposed in the analysis, in particular we measure the $L^2$ norm of pressure and the $k^{-1/2}$-weighted norm of flux. As is common for finite volume and mixed finite-element methods, we use cell-centered quadrature when evaluating the $L^2$ norm in the domain, which allows us to exhibit the usual super-convergence behavior for these methods on smooth problems.

Due to the prevalence of image data for the applications of interest, all the numerical experiments are conducted on uniform Cartesian grids and the TPFA scheme is used.  To solve the resulting linear system, we use algebraic multigrid (AMG) preconditioned flexible GMRes (FGMRes) method, as detailed in the Supplementary Materials~\ref{sec:solvers}. Here, an unsmoothed aggregation AMG method is used as the preconditioner.  More precisely, one step of V-cycle AMG method is applied with one step of Gauss-Seidel method for both pre- and post-smoothing.  The FGMRes method is terminated when the $\ell^2$-norm of the initial residual is reduced by a factor of $10^{-6}$. The solver performance for all three cases below is also reported in the Supporting Information. 
The implementations are in Matlab, and code is available from the authors on request. All runs are conducted on a Linux workstation using 40 Intel Xeon CPU processors (E5-2698 v4) at 2.20GHz clock speed, with 256 Gb RAM.

\subsection{Case 1: Comparison to Convergent Series Solution}\label{sec:Case1}

Our first case is constructed such that a series solution (in terms of well-known Bessel functions) is available. The full derivation of the series solution is available in the Appendix, an illustration of the geometry, and the series solution is provided in Figure \ref{fig:case1_illustration}. Throughout this subsection, we consider the series solution as the exact solution of the equations, since arbitrary precision can be obtained using well-established implementations of table values \cite{abramowitz1948handbook}. 

\begin{figure}
    \centering
    \includegraphics[width=9cm]{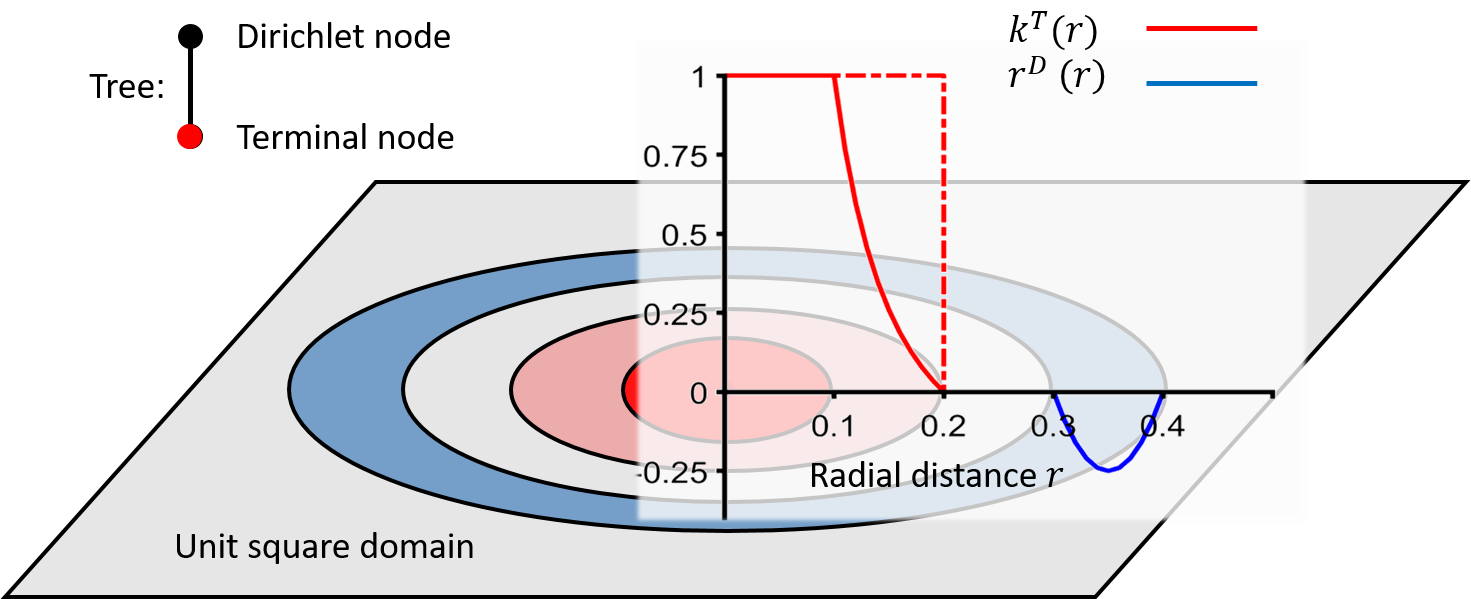}
    \includegraphics[width=6cm]{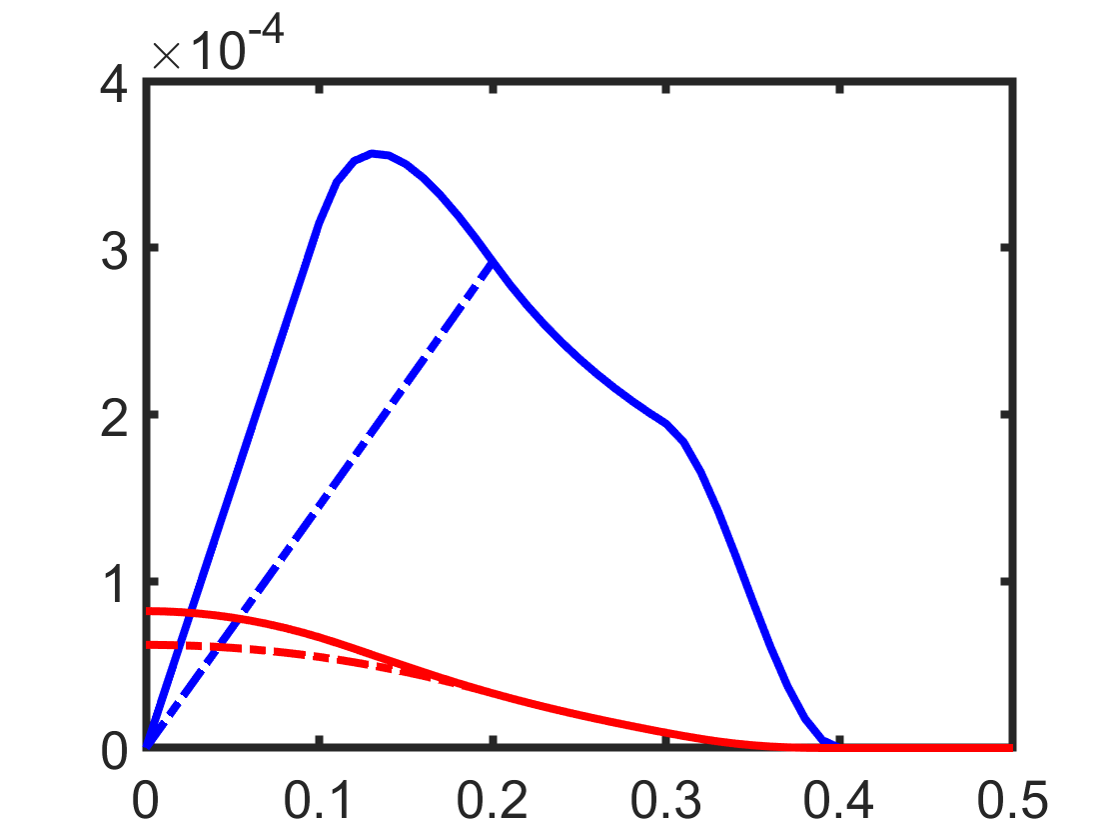}
    \caption{Left: Illustration of domain for Case 1, with transfer function $k^T$ (red) and source term $r^D$ (blue). The source term, which is actually a sink in this setup, has been scaled by $10^2$ for visualization purposes. Right: Illustration of pressure (black) and radial flux (grey) in the domain as function of distance from the origin. Note that for the pressure, we have plotted $p^D(r) - p^D(0.5)$ in order to facilitate visual comparison. In both figures, cases 1A is represented by solid lines and 1B by dashed-dotted lines. }
    \label{fig:case1_illustration}
\end{figure}

The main features of the solution is a simple two-node tree, where node 0 is a Dirichlet boundary node, and node 1 is a terminal node. Correspondingly, there is a single edge in the network, which contains the network flux. The solution is constructed with a transfer function $k^T$ that has compact support on a disc of radius $r_1$ from the origin. We consider two variants of the case, case 1A has a smoothly degenerating transfer function such that (in terms of radial coordinates) $k^T(r) \rightarrow 0$ as $r\rightarrow r_1$, while case 1B has a constant $k^T$ within the disc (and zero outside), thus $k^T \sim H(r_1-r)$, where $H$ denotes the Heaviside function. To drive the system, a quadratic source term is provided in the region $r_2 < r \leq r_3$. 

We conduct numerical experiments with unit values, such that the domain $\Omega$ is the unit square centered at the origin, the domain and network permeabilities are unit valued, and the scaling of source term $r^D=1$. The transfer function $k^T$ has a unit maximum value at the origin, for both case A and B, thus in the notation of the appendix $k^T_0=1$. As stated, we consider two versions of the case. For the case 1A, we consider a degenerating transfer function $k^T$, with $r_0 = 0.1$, $r_1 = 0.2$, $r_2 = 0.3$, $r_3 = 0.4$. For case 1B, we let the transfer function abruptly go to zero by keeping all radii as in case 1A, except for $r_0 = 0.2$. 

An important aspect of the implementation is the accuracy with which the right-hand-side and the inner products involving $k^S$ are evaluated. In the results reported here, we have used a fourth-order accurate numerical quadrature. 

The convergence results of cases 1A and 1B are presented in Table \ref{tab:Case1A} and \ref{tab:Case1B}. We show the convergence history separated into components similar to the analysis, i.e. Domain, Scaled terminal flux, and Network. 

First note that for this example, since the network contains a single throat and the domain has Neumann boundary conditions, global conservation of mass implies that $q_h^N$ will be exact up to the quadrature error in the evaluation of $r^D$, and similarly for $p_h^N$. Thus the fourth-order convergence of these variables is expected. 

As for the remaining variables, we observe in both Case 1A and Case 1B optimal second-order convergence of $p_h^D$ and first-order convergence of $\bm{q}_h^D$.  In this example, the scaled terminal flux $q_h^S$ is essentially just the weighted difference between $p_h^D$ and $p_h^N$, and thus it inherits the (slower) convergence rate of the two, i.e. second-order. By comparing the two cases, we see that there is no influence of the degeneracy of $k^S$.

\begin{table}[]
\scriptsize
\begin{center}
\begin{tabular}{ l  l  l  l  l  l  l  l }
Variable & $1/h$ & Error $D$ & Rate $D$ & Error $S$ & Rate $S$ & Error $N$ & Rate $N$\\
\hline
 &     $16$ & 1.81e-07 &     &     &     & 4.91e-09 &    \\
 &     $32$ & 4.12e-08 & 2.13 &     &     & 1.59e-10 & 4.95\\
$p$ &     $64$ & 1.03e-08 & 1.99 &     &     & 1.23e-11 & 3.70\\
 &     $128$ & 2.63e-09 & 1.98 &     &     & 3.69e-13 & 5.06\\
 &     $256$ & 6.55e-10 & 2.00 &     &     & 4.88e-15 & 6.24\\
 &     $512$ & 1.64e-10 & 2.00 &     &     & 2.59e-16 & 4.24\\
Average &  &  & 2.02 &  &     &  & 4.84\\
\hline
 &     $16$ & 1.68e-05 &     & 2.38e-07 &     & 4.91e-09 &    \\
 &     $32$ & 8.29e-06 & 1.02 & 4.98e-08 & 2.26 & 1.59e-10 & 4.95\\
$q$ &     $64$ & 4.11e-06 & 1.01 & 1.25e-08 & 1.99 & 1.23e-11 & 3.70\\
 &     $128$ & 2.06e-06 & 0.99 & 3.05e-09 & 2.04 & 3.69e-13 & 5.06\\
 &     $256$ & 1.03e-06 & 0.99 & 7.64e-10 & 2.00 & 4.88e-15 & 6.24\\
 &     $512$ & 5.19e-07 & 0.99 & 1.91e-10 & 2.00 & 2.59e-16 & 4.24\\
Average &  &  & 1.00 &  & 2.06 &  & 4.84\\
\hline
\end{tabular}
\end{center}
\caption{Convergence of case 1A. Upper part of the table gives convergence information for the pressure variables $p^D$ and $p^N$, while the lower part of the table gives the convergence information for the flux variables ${\bm q}^D$, $q^S$ and $q^N$.}\label{tab:Case1A}
\label{}
\end{table}

\begin{table}[H]
\scriptsize
\begin{center}
\begin{tabular}{ l  l  l  l  l  l  l  l }
Variable & $1/h$ & Error $D$ & Rate $D$ & Error $S$ & Rate $S$ & Error $N$ & Rate $N$\\
\hline
 &     $16$ & 2.02e-07 &     &     &     & 4.91e-09 &    \\
 &     $32$ & 3.37e-08 & 2.59 &     &     & 1.59e-10 & 4.95\\
$p$ &  $64$ & 8.06e-09 & 2.06 &     &     & 1.23e-11 & 3.70\\
 &     $128$ & 2.03e-09 & 1.99 &     &     & 3.55e-13 & 5.11\\
 &     $256$ & 5.94e-10 & 1.77 &     &     & 2.69e-15 & 7.05\\
 &     $512$ & 1.37e-10 & 2.11 &     &     & 4.88e-16 & 2.46\\
Average &  &  & 2.11 &  &     &  & 4.65\\
\hline
 &     $16$ & 1.65e-05 &     & 1.35e-06 &     & 4.91e-09 &    \\
 &     $32$ & 8.54e-06 & 0.95 & 2.00e-07 & 2.76 & 1.59e-10 & 4.95\\
$q$ &     $64$ & 4.21e-06 & 1.02 & 3.02e-08 & 2.73 & 1.23e-11 & 3.70\\
 &     $128$ & 2.11e-06 & 1.00 & 7.70e-09 & 1.97 & 3.55e-13 & 5.11\\
 &     $256$ & 1.05e-06 & 1.00 & 6.54e-10 & 3.56 & 2.69e-15 & 7.05\\
 &     $512$ & 5.26e-07 & 1.00 & 1.84e-10 & 1.83 & 4.88e-16 & 2.46\\
Average &  &  & 1.00 &  & 2.57 &  & 4.65\\
\hline
\end{tabular}
\end{center}
\caption{Convergence of case 1B. For complete legend, see figure \ref{tab:Case1A}}\label{tab:Case1B}
\end{table}

\subsection{Case 2: A Prototypical 4 Dimensional Case}
Our second example is chosen to illustrate a typical case encountered in the modeling of tissue. The physical domain is 3-dimensional, however, due to the biomedical properties involved, the physical domain represents two or more continua (biomedically speaking, this corresponds to arterial and venal compartments, etc.). The continua are ordered, and communication between the compartments is only allowed between neighbors in the ordering. As such, the continua represent a discretization of an elliptic equation in a fourth dimension. The mathematical structure of the resulting system is thus one of a 4D elliptic equation, coupled to networks, and is naturally covered by the methods proposed analyzed in this paper. 

\begin{figure}
    \centering
    \includegraphics[width=13cm]{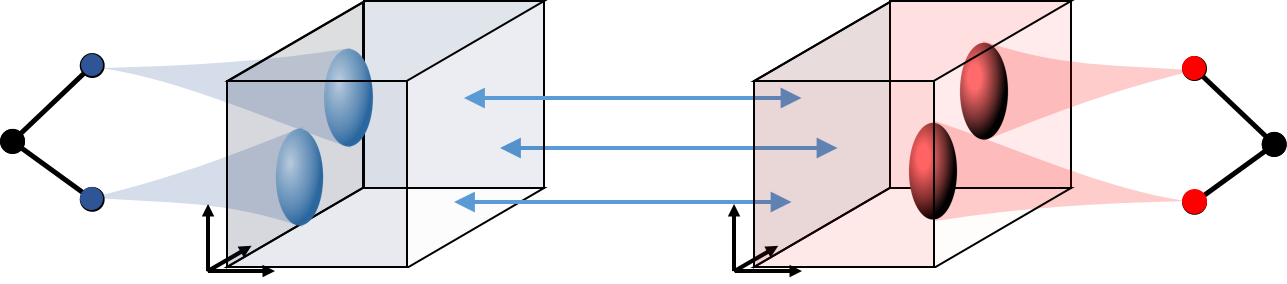}
    \caption{Illustration of domain for Case 2. The arterial network and arterial 3D domain is shaded by red colors, while the venous network and network 3D domain are shaded by blue colors. The two 3D domains together for a 2-point discretization of a 4D domain, where the flow in the fourth dimension is indicated by arrows between the two 3D domains.}
    \label{fig:case2_illustration}
\end{figure}

To explore this concept, and validate the performance of our methods, we consider the following concrete problem, as illustrated schematically in Figure \ref{fig:case2_illustration}. Let the model domain be the unit 4-cube. We consider Neumann boundary conditions on all faces of the domain. Furthermore, we consider two trees, which are named as "arterial tree" and "venous tree", respectively, to conform with applications and the next subsection. Each consists of four nodes connected in the shape of a "Y", wherein each tree, one node is a Dirichlet boundary node ($p^N_D=1$ and $p^N_D=0$ in arterial and venous Dirichlet nodes, respectively), while two nodes are terminal nodes. The arterial terminal nodes $i$ are associated with transfer functions $k_i^T(x) = k^T(|x-y_i|_3)H(1/2 - x_4)$, where $|x-x_i|_3^2 =\sum_{j=1...3} (x_j-y_{i,j})^2$ is the distance in the first three coordinates from the 3-points $y_i$, $x_4$ is the coordinate in the fourth dimension, and $k^T$ are the transfer functions from Section \ref{sec:Case1} with $r_0 =0.1$ and $r_1 = 0.2$. Conversely, the venous terminal nodes are associated with transfer functions $k_i^T(x) = k^T(|x-y_i|_3)H(x_4-1/2)$. For the arteries, the transfer functions are centered on 3-points $y_i$ defined by $[0.43, 0.25, 0.5]$ and $[0.37, 0.75, 0.5]$, while for the veins, the transfer functions are centered on $[0.63, 0.25, 0.5]$ and $[0.57, 0.75, 0.5]$.

We discretize the domain with an anisotropic Cartesian grid in the sense that the first three dimensions are discretized by a regular isotropic Cartesian grid. The fourth dimension is discretized by only two grid cells. This resulting system is equivalent to the common two-compartment model, where the cells in the fourth dimension with $x_4< 0.5$ correspond to the arterial compartment, and the remaining cells the venous compartment. In accordance with the practice in applications, we will emphasize grid refinement over model refinement, and only consider refinement of the first three dimensions. Moreover, we will in accordance with the applications decompose the domain flux into two parts $q^D\rightarrow [q^D , q^P]$, where the flux in the fourth dimension $q^P$ is referred to as "perfusion". Model parameters are otherwise set to unity, $k^D = k^P=k^N = 1$ where $k^P$ is the permeability constant of the flux in the fourth direction.

The convergence results for this case are presented in Table \ref{tab:Case2}. All errors are reported relative to a numerical solution calculated with a resolution of $h = 256^{-1}$, and convergence rates are therefore reported for grids up to a resolution of $h = 128^{-1}.$ As expected, we observe quasi-optimal convergence rates in all variables. In contrast to case 1, we no longer have the artificial exact solutions in the network, where we observe second order convergence, as inherited from the interaction between the terminal nodes and the second-order accurate pressure in the domain. 

\begin{table}[H]
\scriptsize
\begin{center}
\begin{tabular}{ l  l  l  l  l  l  l  l  l  l }
Variable & $1/h$ & ErrorD & RateD & ErrorT & RateT & ErrorN & RateN & ErrorP & RateP\\
\hline
p &    $16$ & 3.42e-05 &     &      &     & 3.86e-06 &     &      &    \\
 &     $32$ & 8.60e-06 & 1.99 &      &     & 6.20e-07 & 2.64 &      &    \\
 &     $64$ & 2.21e-06 & 1.96 &      &     & 1.84e-07 & 1.75 &      &    \\
 &     $128$ & 6.36e-07 & 1.80 &      &     & 4.20e-08 & 2.13 &      &    \\
Average &  &  & 1.92 &  &     &  & 2.17 &  &    \\
q &     $16$ & 1.12e-03 &     & 2.45e-02 &     & 2.01e-06 &     & 2.49e-05 &    \\
 &     $32$ & 4.09e-04 & 1.45 & 1.11e-02 & 1.14 & 3.24e-07 & 2.64 & 6.25e-06 & 1.99\\
 &     $64$ & 1.55e-04 & 1.40 & 5.02e-03 & 1.15 & 9.60e-08 & 1.75 & 1.60e-06 & 1.97\\
 &     $128$ & 4.99e-05 & 1.64 & 2.03e-03 & 1.31 & 2.20e-08 & 2.12 & 4.42e-07 & 1.85\\
Average &  &  & 1.49 &  & 1.20 &  & 2.17 &  & 1.94\\
\hline
\end{tabular}
\end{center}
\caption{Convergence results for Case 2. All variables are reported as in table \ref{tab:Case1A}, with the (perfusion) flux in the fourth dimension additionally reported as $q^P$.}
\label{tab:Case2}
\end{table}

\subsection{Case 3: Full-brain simulation study}
\label{sec:Case3}
As a final test case, we consider the application to a real data set, associated with blood flow in the human brain. As a modeling concept, we use the same general structure as illustrated in Figure \ref{fig:case2_illustration}. The data-set and parameterization is described in detail in  \cite{HodnelandHansonSaevareidNaevdalLundervoldSolteszovaMunthe-KaasDeistungReichenbachNordbotten2019}, and is illustrated in Figure \ref{fig:brain-figure} of the introduction.  Here we summarize the main features: the data contains two trees, corresponding to a segmentation of the arterial and venous systems, containing 355 and 1222 nodes, respectively. For the finest simulations, we consider full resolution MRI acquisitions, which after co-registration to the finest resolution image is a Cartesian grid with $346 \times 448 \times 319$ grid cells, representing a brick-shaped field of view of $177 \times 224 \times 160 \mathrm{mm}^3$. The actual domain $\Omega$ is a 4D extrusion of the 3D subset of the field of view from a T1-weighted MR acquisition which contains segmentation of the brain acquired with the human brain segmentation software FreeSurfer \cite{dale:99}. Thus the mathematical formulation is a 4D model in the sense of the previous sub-section, and after discretizing the fourth dimension by two cells, the full model contains $17.5$ million grid cells. The domain $\Omega$ is furthermore divided into two subdomains by the FreeSurfer segmentation (anatomically: white matter $\Omega_{WM}$ and gray matter $\Omega_{GM}$), with permeability in the three physical dimensions set to an isotropic value of $k^D = 10^{-11} m^2$. The permeability $k^D$ acting in the 4th dimension (anatomically: the perfusion coefficient), is anisotropic relative to the physical dimensions, and is in the white matter set to $k^P = 10^{-6}m\cdot s \cdot \mathrm{kg}
^{-1}$, $x \in \Omega_{WM}$, and  in grey matter is set to $k^P = 1.6 \cdot 10^{-6} m\cdot s \cdot \mathrm{kg}^{-1}$, $x \in \Omega_{GM}$. The transfer permeability is set according to equation \eqref{eqn:kT-form}, with $r_1 = 30$mm, $r_0 = r_1/2$, and $k_0^T=10^{-4}$. 

The arterial and venous vessel trees are extracted down to voxel resolution from time-of-flight (TOF) and quantitative susceptibility mapping (QSM), respectively. Within both these MR acquisitions, a crude segmentation of the vessels is obtained by local adaptive thresholding, leading to a large number of disconnected structures. These binary satellites are connected with the main structure by repeatedly solving a boundary value problem around the main structure $S$ for each satellite. Hence, the solution of the Eikonal equation $|\nabla T| = f(x)^{-1}, T(x \in S) = 0$ for the arrival time $T(x)$ provides a geodesic distance map from $x$ to the main structure. The Eikonal equation was solved using the fast marching method \cite{Sethian1996}. The function $f(x)$ is user-supplied and is known as the speed of the arrival time field. Within TOF we use the image itself as speed function and for QSM the inverted image due to low contrast within vessels. The speed is set to zero outside the brain, possibly leading to curved geodesic trajectories, which is the reason why the signed distance function is not used. The arrival time itself is not of interest here, but rather the backtracing in the arrival time field from the satellite to the main structure providing a most probable path connecting these two structures with each other. The current approach favors probable paths to be aligned with dark- or bright-contrast structures that partly disappear within the images due to noise in the data. While backtracing, visited points are added to the main structure with a suitable vessel radius. Finally, the process of solving the Eikonal equation is repeated for each satellite, ultimately providing a connected structure, i.e. the arterial or venous vessel tree. For a more comprehensive description of how satellites are connected to the main structure, we refer the readers to \cite{HodnelandHansonSaevareidNaevdalLundervoldSolteszovaMunthe-KaasDeistungReichenbachNordbotten2019}.

The now connected binary trees are converted into abstract graphs using built-in Matlab routines for skeletonization, leaf (terminals and roots), and node detection. Vessel length is the geodesic distance along the edge between two connecting nodes, and the average vessel diameter is fitted by a Euclidean distance function around the centerline. The edge flow permeability $k^N$ is assigned individually for each edge based on Hagen-Poiseulle's law, using local estimates of vessel diameter and vessel length measured in the binary vessel trees. Both arterial and venous trees are modeled with Dirichlet root nodes as the main arterial inlets and main venous outlets. The only properties of the vessel trees that are needed for the simulation experiments are the edge flow permeability $k^N$ within a connectivity matrix and the terminal positions within the field of view.

\begin{figure} 
    \centering
    \includegraphics[width=16cm]{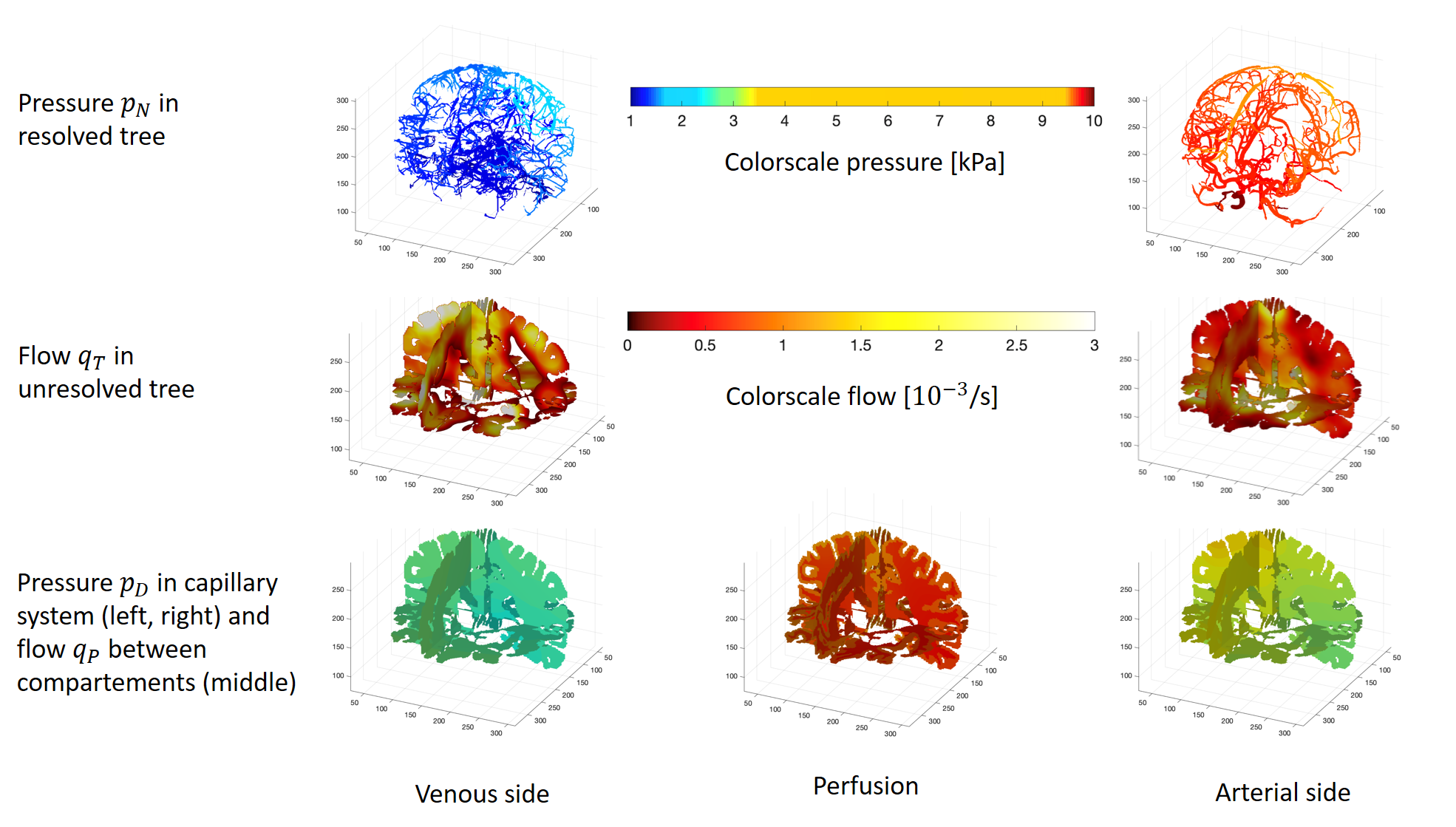}
    \caption{Simulation results for Case 3, showing the pressure solution for the trees and domain, as well as the transfer flux $q^D$ and the component of the domain flux associated with the fourth dimension, denoted $q^P$ in the text. Full-size versions of the subfigures are available in section SM3 of the Supporting Information.}
    \label{fig:case3_results}
\end{figure}

The full brain data contains many important qualitative properties, including connectivity of the trees after preprocessing of the initially disjoint trees, and connectivity of the brain geometry. These properties ensuring well-posedness, as well as the connected representation of grey and white matter, are not trivially preserved when coarsening the data. Thus instead of reporting relative results on a grid sequence for this case (which due to the above would have limited real value), we summarize the calculated solution on the image resolution in Figure \ref{fig:case3_results} (the subfigures of this figure are shown in full size in section SM3 of the supporting information). While the quantitative aspects of the calculated results depend on parameters that are at present not fully justified by clinical measurements, our calculations verify that the proposed methods allow for efficient simulations at imaging resolution, preserving the qualitative properties of the solution corresponding to biomedical expectations. 

\section{Conclusions}\label{sec:conclusion}

We have proposed a mixed-dimensional mathematical model, closely related to models used for modeling fluid flow in human vasculature. We show the well-posedness of this model on the continuous level and develop suitable numerical discretizations, of both mixed finite-element and finite volume types. These are shown to be stable and convergent. 

Our theoretical results are complemented by numerical examples, which demonstrate super-convergence of the method in terms of the pressure variable on smooth solutions, and also verifies the stability and applicability of the method to large scale real-world data sets.

\bibliographystyle{siamplain}
\bibliography{BloodFlowBrain.bib}

\end{document}